\providecommand\@dotsep{5}\def\listtodoname{List of Todos}\def\listoftodos{\hypersetup{linkcolor=black}\@starttoc{tdo}\listtodoname\hypersetup{linkcolor=blue}}\makeatother
\newtheorem{lemma}{Lemma}
\newtheorem{proposition}{Proposition}
\newtheorem{theorem}{Theorem}
\newtheorem{definition}{Definition}
\theoremstyle{remark} 
\newtheorem{example}{Example}
\newtheorem{remark}{Remark}
\def\C{\mathbb C}
\def\R{\mathbb R}
\def\N{\mathbb N}
\def\p{\partial}
\DeclareMathOperator{\linspan}{span}
\DeclareMathOperator{\supp}{supp}
\DeclareMathOperator{\id}{id}
\DeclareMathOperator{\WF}{WF}
\DeclareMathOperator{\ccl}{ccl}
\DeclareMathOperator{\singsupp}{singsupp}
\newcommand{\pair}[1]{\left\langle #1 \right\rangle}
\let\Re\relax
\DeclareMathOperator{\Re}{Re}
\date{Compiled \today}
\title[Detection of connections in cubic wave equations]{Detection of Hermitian connections in wave equations with cubic non-linearity}
\author[X. Chen]{Xi Chen}
\address{ Department of Pure Mathematics and Mathematical Statistics, University of Cambridge, Cambridge CB3 0WB, UK. {\it E-mail address: \bf \tt xi.chen@dpmms.cam.ac.uk}}
\author[M. Lassas]{Matti Lassas}
\address{Department of Mathematics and Statistics, University of Helsinki. {\it E-mail address: \bf \tt Matti.Lassas@helsinki.fi}}
\author[L. Oksanen]{Lauri Oksanen}
\address{Department of Mathematics, University College London, Gower Street, London WC1E 6BT, UK. {\it E-mail address: \bf \tt l.oksanen@ucl.ac.uk}}
\author[G.P. Paternain]{Gabriel P. Paternain}
\address{ Department of Pure Mathematics and Mathematical Statistics, University of Cambridge, Cambridge CB3 0WB, UK. {\it E-mail address: \bf \tt g.p.paternain@dpmms.cam.ac.uk}}
\dedicatory{To the memory of our friend and colleague Slava Kurylev}
\begin{document}
\begin{abstract} We consider the geometric non-linear inverse problem of recovering a Hermitian connection $A$ from the source-to-solution map of the cubic wave equation $\Box_{A}\phi+\kappa |\phi|^{2}\phi=f$, where $\kappa\neq 0$ and $\Box_{A}$ is the connection wave operator in Minkowski space $\mathbb{R}^{1+3}$. The equation arises naturally when considering the Yang-Mills-Higgs equations with Mexican hat type potentials.
Our proof exploits the microlocal analysis of nonlinear wave interactions, but instead of employing information contained in the geometry of the wave front sets as in  previous literature, we study the principal symbols of waves generated by suitable interactions. Moreover, our approach relies on inversion of a novel non-abelian broken light ray transform. 
\end{abstract}
\maketitle

\tableofcontents


\section{Introduction}

This paper considers an inverse problem for a non-linear wave equation motivated by theoretical physics and differential geometry. The main problem we wish to address is the following: 
can the geometric structures governing the wave propagation 
be globally determined from local information,
or more physically, 
can an observer do local measurements to determine the geometric structures in the maximal region where the waves can propagate and return back? 
There has been recent progress on this question when the geometric structure is space-time itself and the 
relevant PDEs are the Einstein equations \cite{KLOU}. 

Here we propose the study of a natural non-linear wave equation when the Lorentzian background is fixed and the goal is the reconstruction of a Hermitian connection.
The main difference between the inverse problems for the 
Einstein equations and the equation considered here is that, in the former case, the geometric structure (the metric) to be reconstructed appears in the leading order terms, and in the latter case, it (the connection) appears in the lower order terms. 
This difference poses novel challenges, since a perturbation in the
leading order affects the wave front sets of solutions whereas lower order perturbations do not. 

The leading order terms can frequently be reconstructed via study of distances (or
time separations/earliest arrival times), whereas lower order terms often require reductions to light ray transform questions. 
Nevertheless, our approach exploits the recent philosophy that non-linear interaction of waves creates new singularities and enriches the dynamics \cite{KLOU, KLU, LUW}. As we shall see this interaction leads to a broken non-abelian light ray transform on lightlike geodesics that has not been previously studied.

Our main long term goal is the study of inverse problems for the Yang-Mills-Higgs equation. The present paper is the first stepping stone in this direction and our objective here is to start exposing the main features that this problem will have by considering a simplified, but non-trivial model case.

\subsection{The Yang-Mills-Higgs equations} 

Let $(M,g)$ be a Lorentzian manifold  of dimension $1+3$ and consider a compact Lie group $G$ with Lie algebra $\mathfrak{g}$. We choose a positive definite inner product on $\mathfrak{g}$ invariant under the adjoint action.
To simplify the exposition we discuss the case of 
the trivial bundle $M\times \mathfrak{g}$ over $M$.
The Yang-Mills-Higgs equations are PDEs on a pair $(A,\Phi)$, where 
$$
(A,\Phi)\in C^{\infty}(M;T^*M\otimes \mathfrak{g})\oplus C^{\infty}(M;\mathfrak{g}).
$$ 
Since the bundle $M\times \mathfrak{g}$ is trivial, $A$ is a connection, and it is called the Yang-Mills potential; $\Phi$ is the Higgs field.
The Yang-Mills-Higgs equations are
\begin{align}
&D_{A}^*F_{A}+[\Phi,D_{A}\Phi]=0;\label{eq:1}\\
&D_{A}^*D_{A}\Phi +V'(|\Phi|^{2})\Phi=0,\label{eq:2}
\end{align}
where $F_{A}:=dA+A\wedge A$ is the curvature of $A$,  $D_{A}\Phi:=d\Phi+[A,\Phi]$ is the associated covariant derivative, and $V'$ is the derivative of a smooth function $V: [0,\infty)\to\mathbb{R}$. The adoint $D_{A}^*$ is taken with respect to $g$ and hence $\Box_{A}:=D_{A}^*D_{A}$ is the wave operator associated with $g$ and $A$. 

An extensively studied  case is the  Yang-Mills-Higgs equations with the Mexican hat type potential,
    \begin{align}\label{eq: Mexican hat 1}
V(|\Phi|^{2})=\frac 12 \kappa(|\Phi|^{2}-
b)^2,
    \end{align}
where $\kappa,b\in \R$, see e.g.\ \cite[Eq. (10.5)]{Cottingham} where the Lagrangian formulation of the problem is used. We will consider the potential (\ref{eq: Mexican hat 1}) with $\kappa \ne 0$, and to simplify the notations, with $b = 0$. The case $b \ne 0$ is not substantially different. 
Our choice can be viewed as the simplest potential introducing a non-linearity. 
We refer also to 
\cite{Uhlenbeck}
where Yang-Mills-Higgs equations, with the  potential (\ref{eq: Mexican hat 1}), are discussed in a purely mathematical context, $(M,g)$ being a Riemannian manifold there.

As it is well known, equations (\ref{eq:1})--(\ref{eq:2}) are invariant under the group of gauge transformations which in this case coincides with the set of maps $\mathbf{u}\in C^{\infty}(M;G)$ and the action on pairs is 
$$(A,\Phi)\mapsto (\mathbf{u}^{-1}d\mathbf{u}+\mathbf{u}^{-1}A\mathbf{u}, \mathbf{u}^{-1}\Phi \mathbf{u}).$$ When $\Phi=0$ we obtain the pure Yang-Mills equation $D_{A}^*F_{A}=0$.

\subsection{Formulation of the inverse problem in the model case}

Dealing with the equations \eqref{eq:1}--\eqref{eq:2} from the outset might be too ambitious, so here we propose a simplified model. We shall suppose that we have a trivial bundle $E=M\times \mathbb{C}^{n}$
and a Hermitian connection $A$ on $E$ giving rise to a covariant derivative $d+A$. In this case, the gauge group is $U(n)$. 
We take $V$ to be the Mexican hat type potential (\ref{eq: Mexican hat 1}) with $b=0$, discard equation \eqref{eq:1} completely and focus on the analogue of equation \eqref{eq:2}, with 
$M\times \mathfrak{g}$ replaced by $E$. That is, we consider the equation
\begin{equation}\label{eq:model}
\Box_{A}\phi+\kappa|\phi|^{2}\phi=0,
\end{equation}
where $\phi$ is a section of $E$, $\Box_{A}=(d+A)^{*}(d+A)$ and $|\phi|$ is the norm with respect to the standard Hermitian inner product of $\mathbb{C}^n$. We shall further simplify matters by assuming that $M$ is $\mathbb{R}^{1+3}$ and that $g$ is the Minkowski metric.

Let us consider Cartesian coordinates $(t=x^{0},x^{1},x^{2},x^{3})$ on $\mathbb{R}^{1+3}$.
Let $\epsilon_0 > 0$ and define $B(\epsilon_0) = \{y \in \R^3 : |y| < \epsilon_0 \}$ and 
    \begin{align}\label{def_mho}
\mho = (0,1) \times B(\epsilon_0).
    \end{align}
We will formulate a source-to-solution map, associated to (\ref{eq:model}), that corresponds physically to measurements gathered in $\mho$. 
We can think that the measurements are performed by an observer travelling along the path 
\begin{align}\label{def_mu}
\mu : [0,1] \to \R^{1+3}, \quad \mu(t) = (t,0).
\end{align}
In what follows, we consider only finite time intervals, and write $\mathcal M = (-1, 2) \times \R^3$.
and let $\mathcal C$ be a small neighbourhood of the zero section in $C_0^4(\mho; E)$.
Then the source-to-solution map 
\begin{align}\label{def_L}
L_{A}f:= \phi|_\mho, \quad f \in \mathcal C,
\end{align}
is well-defined, where $\phi$ is the solution of 
\begin{align}\label{wave_nonlin}
&\Box_{A} \phi +\kappa |\phi|^2 \phi = f, \quad \text{in $\mathcal M$},
\\\notag
&\phi|_{t < 0} = 0.
\end{align}
We discuss the existence of $L_A$ in more detail in Section \ref{sec_prelim} below. 

The goal of the observer is to determine the Yang-Mills potential $A$ up to the natural obstructions, given the source-to-solution map $L_A$.
The causal structure of $(M,g)$ encodes the finite speed of propagation for the wave equation (\ref{eq:model}). 
Given $x,y\in M$ we say that $x \le y$ if $x=y$ or $x$ can be joined to $y$ by a future pointing causal curve,
and denote the causal future of $x\in M$ by $\mathscr J^{+}(x)=\{y\in M:\;x\leq y\}$. The causal future $\mathscr J^{+}(x)$ is the largest set that waves generated at $x$ can reach. The causal past of a point $z \in M$ is denoted by $\mathscr J^{-}(z)=\{y\in M:\;y\leq z\}$.
If waves generated at $x$ are recorded at $z$, the finite speed of propagation dictates that no information on the potential $A$ outside the causal diamond $\mathscr J^{+}(x) \cap \mathscr J^{-}(z)$ is obtained. 

{\em The model problem} is to determine $A$ given $L_{A}$
in the largest domain possible, that is, in 
    \begin{align}\label{def_diamond}
\mathbb{D} = \bigcup_{x,z \in \mho} \left(\mathscr J^+(x) \cap \mathscr J^-(z) \right),
    \end{align}
up to the natural gauge,
    \begin{align}\label{the_gauge}
A \mapsto \mathbf{u}^{-1}d\mathbf{u}+\mathbf{u}^{-1}A\mathbf{u},
    \end{align}
where $\mathbf{u} \in C^\infty(\mathbb D; U(n))$ and $\mathbf{u}|_\mho = \id$.
The sets $\mho$ and $\mathbb D$ are visualized in Figure \ref{fig_diamond}.

Observe that if we have two connections $A$ and $B$ on $M$ such that there exists a smooth map $\mathbf{u}:\mathcal M\to U(n)$ with the property that $B=\mathbf{u}^{-1}d\mathbf{u}+\mathbf{u}^{-1}A\mathbf{u}$ and $\mathbf{u}|_{\mho}=\text{id}$,
then $\Box_{B}=\mathbf{u}^{-1}\Box_{A}\mathbf{u}$ and $|\mathbf{u}\phi|=|\phi|$.
Moreover, as $f$ has compact support in $\mho$ it holds that $\mathbf{u}f=f$.
Therefore $\phi$ solves \eqref{wave_nonlin} for $B$ if and only if $\mathbf{u}\phi$ solves \eqref{wave_nonlin} for $A$, 
and it follows that $L_{A}=L_{B}$. This shows that the gauge (\ref{the_gauge}) is indeed natural.

Our main theorem asserts that the model problem has a unique solution, or in more physical terms, the measurements performed on $\mho$, as encoded by $L_A$, 
determine the gauge equivalence class of the Yang-Mills potential $A$, in the largest possible causal diamond $\mathbb D$. As $\mathbb D$ is strictly larger than $\mho$, we can view the determination of the equivalence class of $A$ as a form of remote sensing. We emphasize that the gauge equivalence classes of Yang-Mills potentials, not the potentials themselves, correspond to physically distinct configurations.

\begin{theorem}\label{thm:main}
Let $A$ and $B$ be two connections in $\R^{1+3}$ such that $L_{A}=L_{B}$ where the source-to-solution map $L_A$ is defined as above, and $L_B$ is defined analogously, with $A$ replaced by $B$ in \eqref{wave_nonlin}. Suppose that $\kappa\neq 0$ in \eqref{wave_nonlin}.
Then there exists a smooth $\mathbf{u}:\mathbb{D}\to U(n)$ such that $\mathbf{u}|_{\mho}=\id$ and $B=\mathbf{u}^{-1}d\mathbf{u}+\mathbf{u}^{-1}A\mathbf{u}$.
\end{theorem}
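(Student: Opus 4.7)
The plan is to exploit the cubic nonlinearity via a three-fold linearization of the source-to-solution map, in the spirit of the higher-order linearization approach \cite{KLOU, KLU, LUW}, but tracking principal symbols rather than only wave front sets. Take three smooth sources $f_1, f_2, f_3$ supported in $\mho$ and form $f_\varepsilon = \sum_{j=1}^3 \varepsilon_j f_j$ with $\varepsilon = (\varepsilon_1, \varepsilon_2, \varepsilon_3)$ small. The solution $\phi_\varepsilon$ of \eqref{wave_nonlin} expands in powers of $\varepsilon$, and the first-order terms $v_j = \p_{\varepsilon_j} \phi_\varepsilon|_{\varepsilon = 0}$ satisfy the linear connection wave equation $\Box_A v_j = f_j$, while the pure mixed third derivative
\[
w = \p_{\varepsilon_1}\p_{\varepsilon_2}\p_{\varepsilon_3}\phi_\varepsilon\big|_{\varepsilon = 0}
\]
solves $\Box_A w = -\kappa\,\mathcal T(v_1, v_2, v_3)$, where $\mathcal T$ is the symmetrized trilinear form coming from $|\phi|^2\phi$. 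Since $L_A = L_B$, the identity $w_A|_\mho = w_B|_\mho$ holds for every triple $(f_1, f_2, f_3)$, and for the associated linear waves $v_j$ arising from the two connections.

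Next I would specialize each $f_j$ to a conormal distribution supported near a point $y_j \in \mho$ with wave front set a single future-pointing lightlike covector, so that the $v_j$ are Lagrangian distributions with singularities along the forward light cone from $y_j$. By varying the $y_j$ and their covectors, I can arrange the three cones to meet transversally at a single interaction point $y_0 \in \mathbb{D}$, where the parallel transported covectors $\eta_1, \eta_2, \eta_3 \in T^*_{y_0}\mathcal M$ satisfy the resonance condition that $\eta_0 := \eta_1 + \eta_2 + \eta_3$ is again future-pointing lightlike. A standard three-wave interaction analysis then shows that $\mathcal T(v_1, v_2, v_3)$ is conormal at $y_0$ along $\eta_0$, and propagation by $\Box_A^{-1}$ produces a Lagrangian singularity of $w$ along the forward light ray from $y_0$ with covector $\eta_0$. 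Choosing the configuration so that this outgoing ray reaches a point $z \in \mho$, the new singularity is visible to $L_A$.

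The essential new ingredient is that the \emph{location} of the wave front set of $w$ at $z$ depends only on the Minkowski metric and is therefore identical for $A$ and $B$; the connection enters only through the \emph{principal symbol}. A direct symbol calculation shows that this symbol factors as a composition of $A$-parallel transport operators along the four legs of the broken light ray: the initial symbols of the $v_j$ are parallel transported from $y_j$ to $y_0$ along the incoming lightlike geodesics, combined at $y_0$ by the pointwise Hermitian cubic pairing, and then parallel transported along the outgoing lightlike geodesic from $y_0$ to $z$. The hypothesis $L_A = L_B$ forces the corresponding composite operators for $A$ and $B$ to coincide for every admissible four-legged broken light ray of this type. This identity is the non-abelian broken light ray transform announced in the abstract.

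The principal obstacle is the inversion of this transform, since the non-commutativity of $U(n)$ prevents the individual parallel transports from being read off directly from a single product. The strategy I would pursue is to vary the three incoming sources independently, both in position and in covector, while keeping $y_0$ and the outgoing leg fixed; by choosing bases of initial symbols in the fibres at the $y_j$, one extracts the combined action of the tensor product of three incoming parallel transports, from which each of them can be isolated up to a common fibrewise gauge. Varying then the interaction point $y_0$ over $\mathbb D$ and the outgoing leg over all admissible directions should let one sweep out the holonomy of $A^{-1}B$ along every lightlike geodesic segment in $\mathbb D$. Finally, one constructs $\mathbf u: \mathbb D \to U(n)$ by setting $\mathbf u|_\mho = \id$ and propagating outward along light rays using these pointwise gauges; the compatibility of $\mathbf u$ across overlapping rays, which is ensured by the full family of broken-ray identities, then yields $B = \mathbf u^{-1} d\mathbf u + \mathbf u^{-1} A \mathbf u$ globally on $\mathbb D$. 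The hard step is checking that the resonance constraint $\eta_0 = \eta_1 + \eta_2 + \eta_3$ still admits enough broken-ray configurations to make this inversion unambiguous, and that the resulting gauge is smooth rather than merely measurable.
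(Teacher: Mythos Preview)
Your overall architecture matches the paper: three-fold linearization, conormal sources, tracking principal symbols rather than wave front sets, and reduction to a non-abelian broken light ray transform followed by its inversion. However, several concrete steps are either incorrect or missing.

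\textbf{Geometry of the interaction.} You write that the three forward light cones can be arranged to ``meet transversally at a single interaction point $y_0$''. In $1+3$ dimensions three codimension-one hypersurfaces generically intersect in a curve, not a point; the paper's Appendix~C computes this filament explicitly. So the source $\mathcal T(v_1,v_2,v_3)$ is conormal to a one-dimensional submanifold, and its conormal fibre at $y_0$ is only three-dimensional, namely $\linspan(\xi_1,\xi_2,\xi_3)$. The mechanism you sketch (``$\eta_0=\eta_1+\eta_2+\eta_3$ is again lightlike'') is not how the outgoing direction is produced: what is actually needed is the linear-algebraic fact (Lemma~\ref{Lauri's Lemma} in the paper) that for any lightlike $\eta$ at $y_0$ one can choose two lightlike $\xi_2,\xi_3$ arbitrarily close to $\xi_1$ with $\eta\in\linspan(\xi_1,\xi_2,\xi_3)$. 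Without this lemma you cannot show that the outgoing singularity can be aimed back at any preassigned $z\in\mho$.

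\textbf{Form of the transform and its inversion.} Your four-legged object, involving three incoming parallel transports and one outgoing, is what the symbol calculation first gives, but the paper does not attempt to invert it in that form. Instead it takes the limit $x_2,x_3\to x_1=x$ and $c_2,c_3\to c_1$, whereupon the unitarity of parallel transport collapses the expression to $3|c_1|^2\,\mathbf{P}^A_{z\gets y}\mathbf{P}^A_{y\gets x}c_1$, i.e.\ the two-leg transform $\mathbf{S}^A_{z\gets y\gets x}$. Your proposed inversion (varying bases at the $y_j$ to disentangle a tensor product of three unknown unitaries) is substantially harder and it is not clear it works. The paper's inversion of the two-leg transform is short and explicit: set $\mathbf u(y)=\mathbf{P}^A_{y\gets x}\mathbf{P}^B_{x\gets y}$, use $\mathbf{S}^A=\mathbf{S}^B$ to show this is independent of $x$ (Lemma~\ref{lem_gauge}), and differentiate along lightlike geodesics to get the gauge relation. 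Smoothness then comes for free from the smooth dependence of parallel transport on endpoints, rather than from any separate regularity argument.
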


It is straightforward to see that $L_A = L_B$ implies $A = B$ on $\mho$. The non-trivial content of the theorem is the gauge equivalence away from $\mho$. To see that $A$ and $B$ coincide on $\mho$, we 
fix $y\in\mho$ and choose $\phi \in C_0^\infty(\mho; E)$ such that $\phi(y)=0$. Then for small $\epsilon > 0$
it holds that $f:= \epsilon(\Box_{A}\phi+\kappa |\phi|^{2}\phi) \in \mathcal C$. Since $L_{A}=L_{B}$ we see that at $y$:
\[-A_{0}\partial_t \phi +\sum_{j=1}^{3}A_{j}
\partial_{x^{j}} \phi =-B_{0} \partial_t \phi +\sum_{j=1}^{3}B_{j}\partial_{x^{j}} \phi,\]
cf. (\ref{boxA_mink}) below,
and since $d \phi$ at $y$ is arbitrary, there holds $A=B$ at $y$. 

\subsection{Comparison with previous literature}

The previous results on inverse problems for non-linear wave
equations, such as \cite{KLOU, KLU, LUW}, are based on analysis of four
singular, interacting waves. A new feature in the present paper is
that we consider interactions of three waves only. This leads to a
more economic proof, and is particularly well suited for the cubic
non-linearity in \eqref{wave_nonlin}.
A more detailed comparison of interaction three versus four waves is given in the beginning of Section \ref{sec_linalg_lemma}.

Let us briefly explain what we mean by the interactions of three waves.
The idea is to choose a source of the form $f = \epsilon_1 f_1 + \epsilon_2 f_2 + \epsilon_3 f_3$ 
where $\epsilon_j > 0$ are small and $f_j$ are conormal distributions. 
Then the cross-derivative $\partial_{\epsilon_1} \partial_{\epsilon_2} \partial_{\epsilon_3} \phi|_{\epsilon = 0}$ satisfies a linear wave equation with a right-hand side that corresponds to a certain product of $\partial_{\epsilon_j} \phi|_{\epsilon = 0}$, $j=1,2,3$. Here $\epsilon = (\epsilon_1, \epsilon_2, \epsilon_3)$. As also the functions $\partial_{\epsilon_j} \phi|_{\epsilon = 0}$ satisfy the linear wave equation, we can view the cross-derivative as a result of their interaction.

The wave front set of the above cross-derivative was studied in the case of the $1+2$-dimensional Minkowski space by Rauch and Reed \cite{Rauch1982}, see also \cite{Bony,Melrose1985} for later results of similar nature. 
What is new in the present paper, is that, contrary to \cite{Rauch1982} and the previous results on inverse problems for non-linear wave equations,
e.g. \cite{KLOU, KLU, LUW}, 
we employ more precise information on the singular structure of the cross derivative than just its wave front set.
Namely, in a suitable microlocal sense, the cross-derivative has a principal symbol, and the proof of Theorem \ref{thm:main} uses information contained in the principal symbol in order to recover a novel broken non-abelian light ray transform of the connection $A$ along lightlike geodesics.
The proof of Theorem \ref{thm:main} is completed by solving the subsidiary geometric inverse problem of inverting this transform in the Minkowski space, see Proposition \ref{prop_S} below, a result which has independent interest.

In more physical terms, 
we can say that the interaction of the three waves $\partial_{\epsilon_j} \phi|_{\epsilon = 0}$, $j=1,2,3$,
produce an artificial source, that can be viewed either as two moving point sources or as a filament in spacetime, and that emits a wave encoded by the cross-derivative $\partial_{\epsilon_1} \partial_{\epsilon_2} \partial_{\epsilon_3} \phi|_{\epsilon = 0}$. We show that, when the sources $f_j$, $j=1,2,3$, are chosen carefully, the singular wave front emitted by the artificial source returns to $\mho$. This wave front is visualized in Figure \ref{fig_3waves}. Stretching the physical analogy further, we can say the leading amplitude of this singular wave front is the information used in the proof.

\begin{figure}
\includegraphics[width=0.45\textwidth]{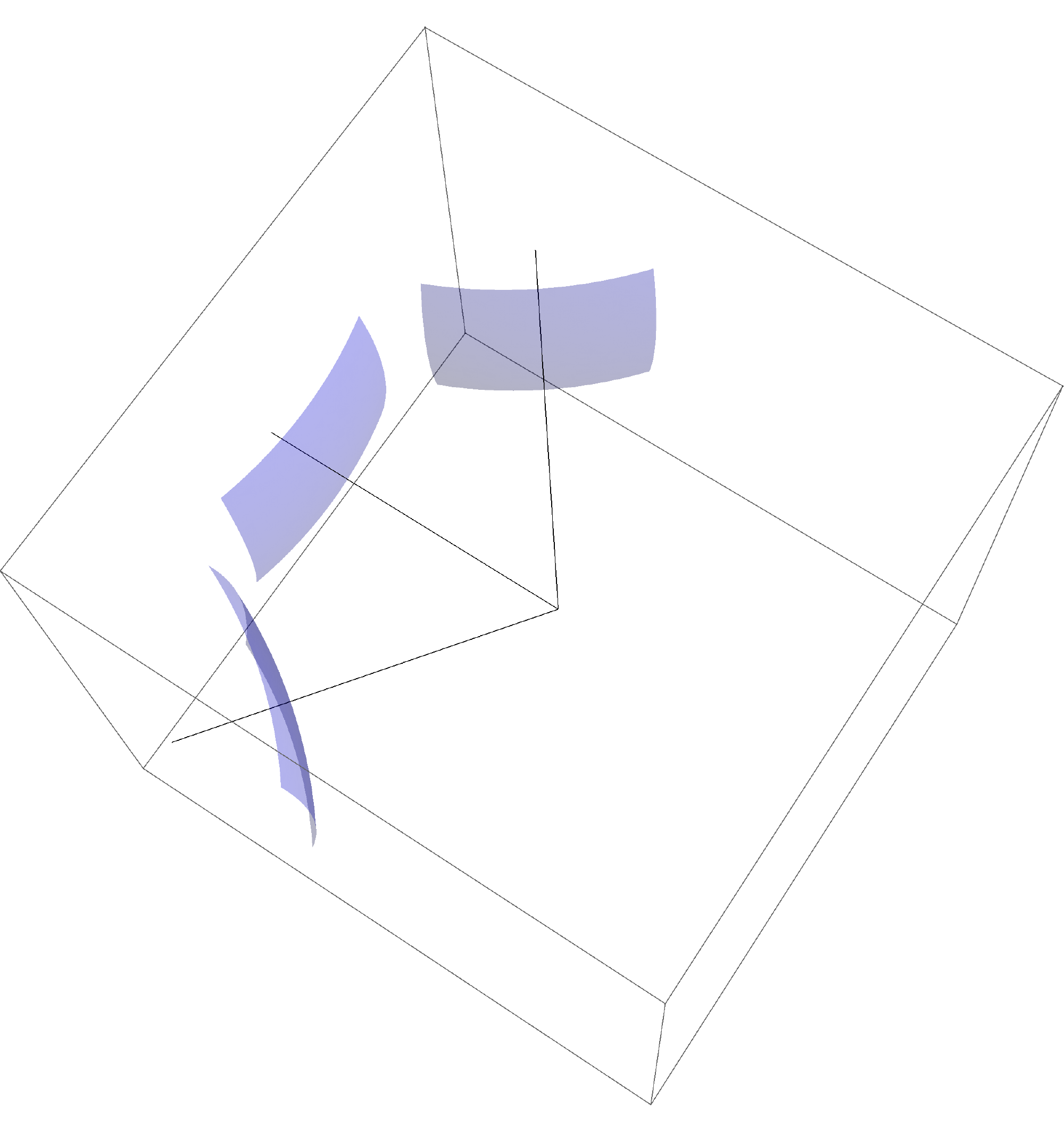}
\includegraphics[width=0.45\textwidth]{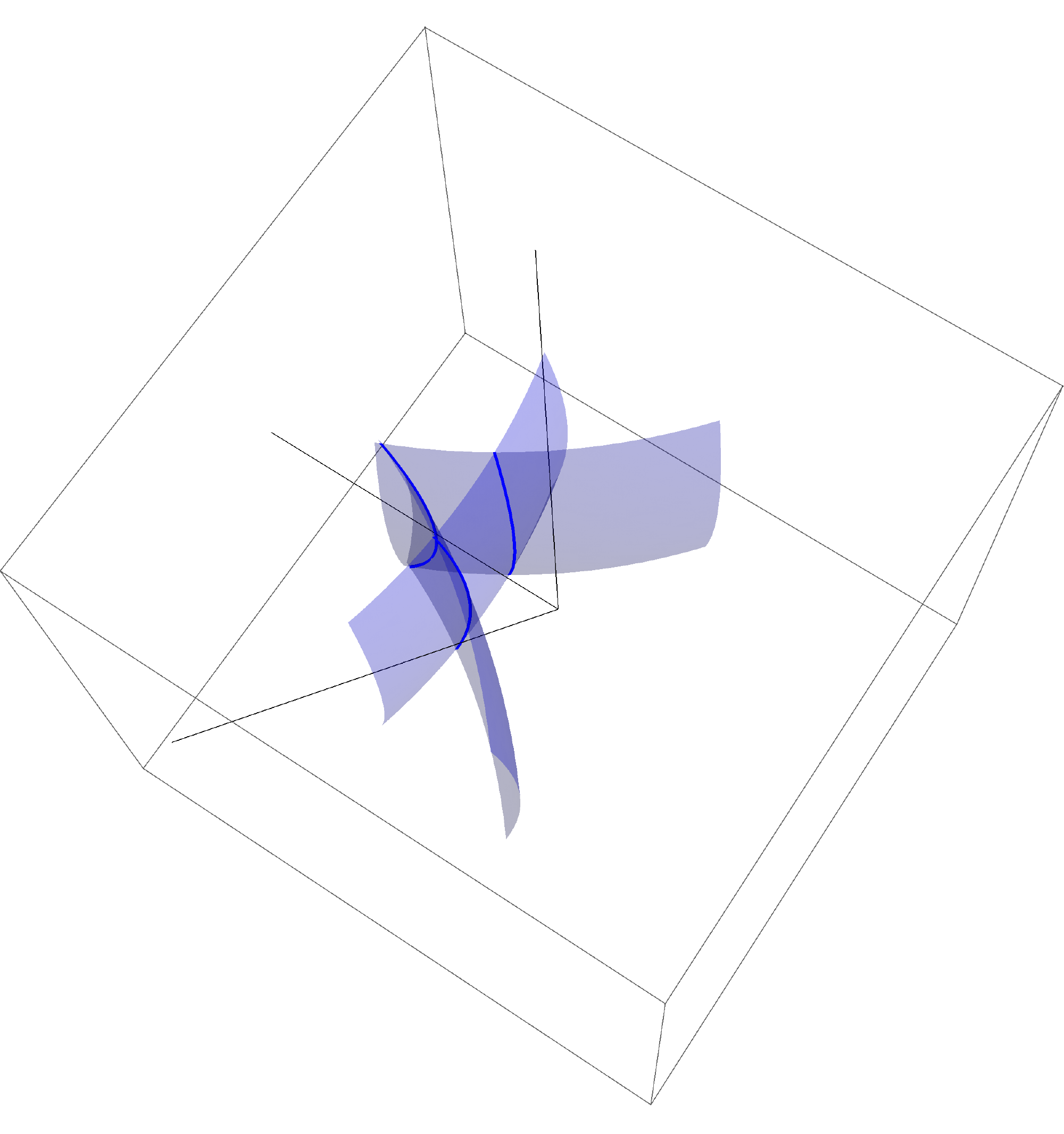}
\includegraphics[width=0.45\textwidth]{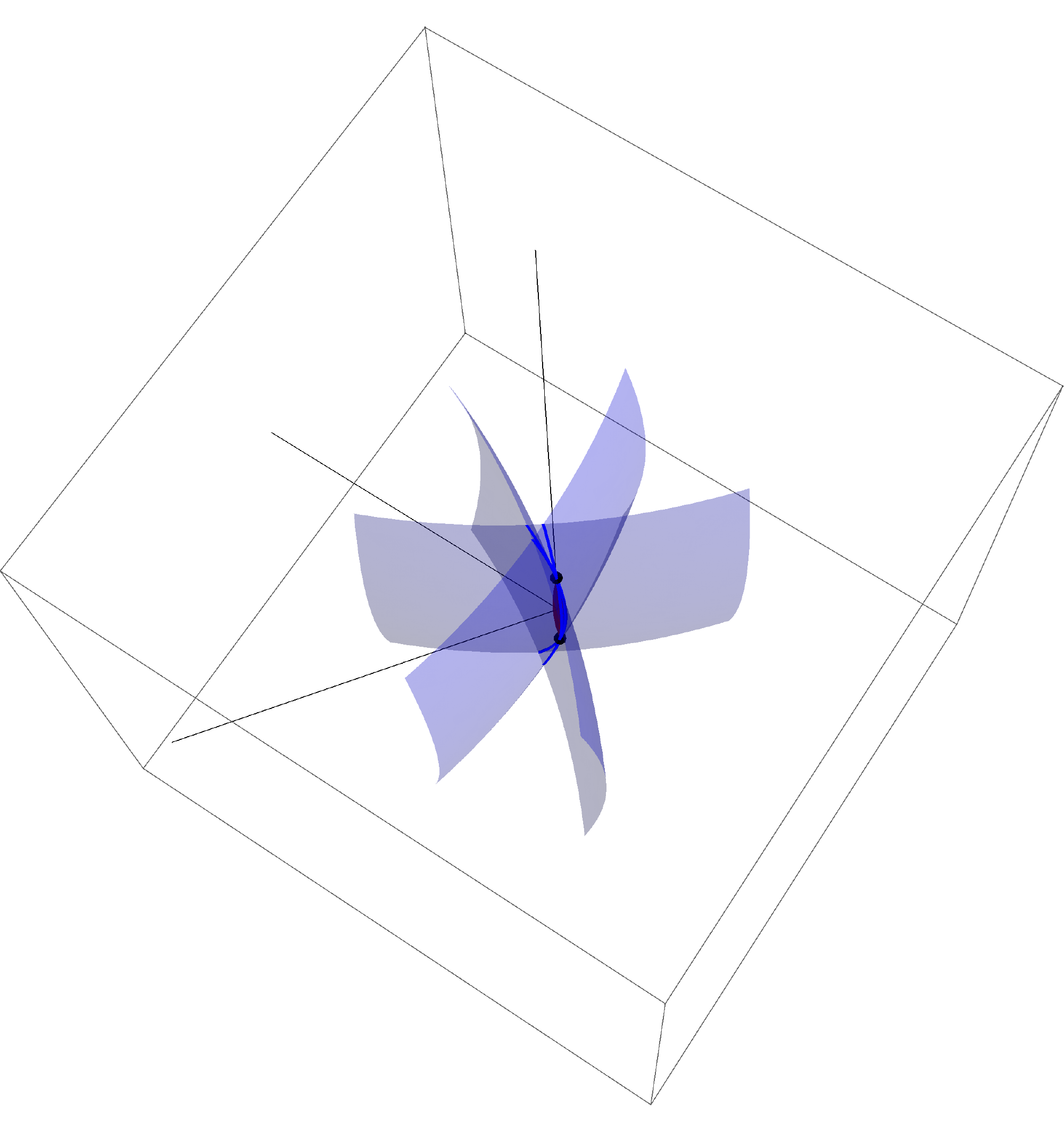}
\includegraphics[width=0.45\textwidth]{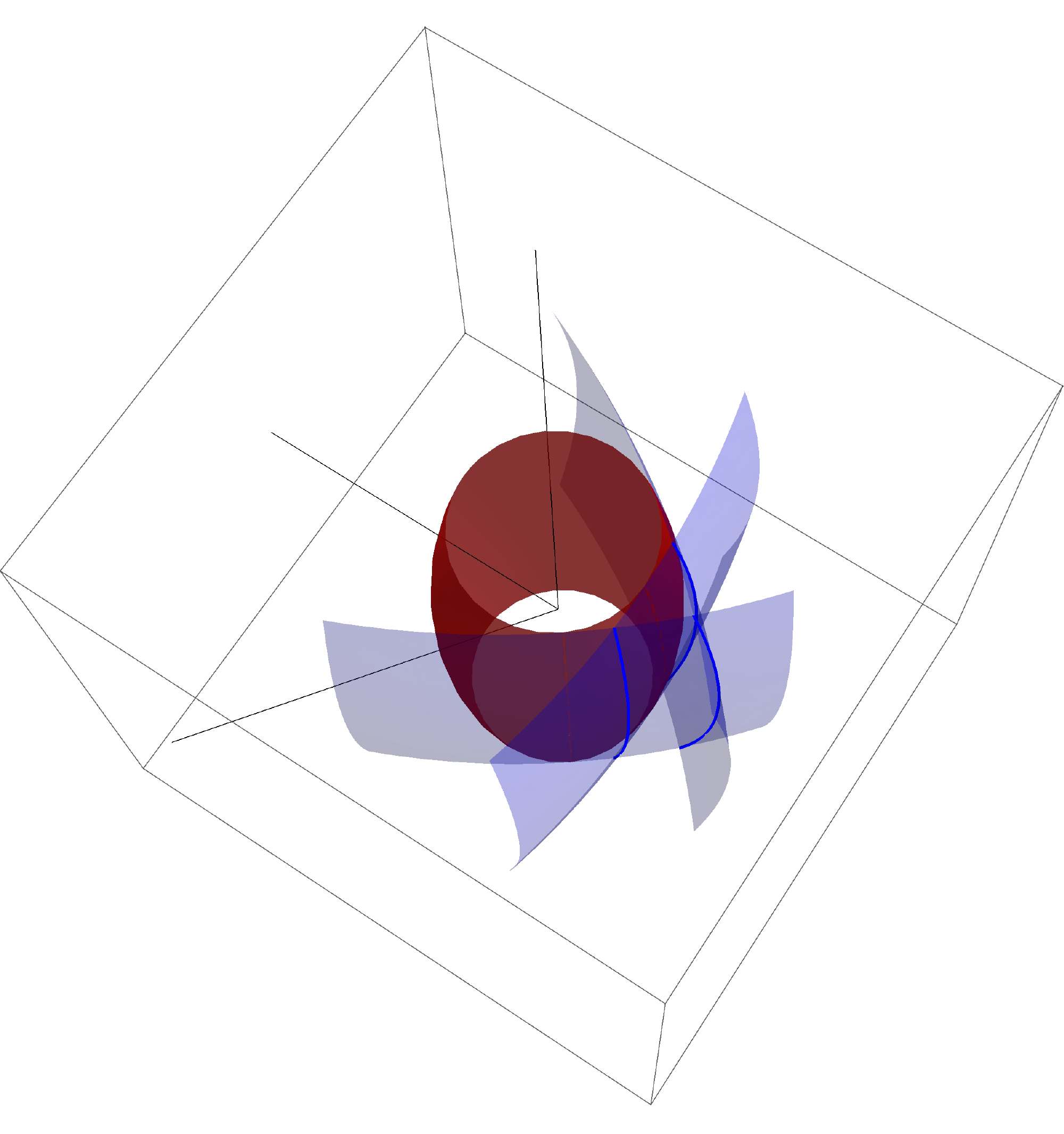}
\caption{
The interaction of three pieces of spherical waves, the 
blue surfaces. 
{\em Top~left.} The pieces propagate along the black lines, and do not intersect yet. {\em Top~right.} The pieces intersect along the blue curves. Pairwise intersections do not produce new propagating wave fronts. 
{\em Bottom left.} All the three pieces intersect in the two
black points, moving along the vertical axis over the
point where the black lines intersect. The points act as artificial sources
that produce a new propagating wave, the red surface. The line
segment traced by the two points can be viewed also as the projection
of a one dimensional filament acting as an artifical source. The
filament curves in spacetime since the two points move with a
non-constant speed.
{\em Bottom right.} As time progresses, the red propagating wave front grows. Eventually it will reach the points where the pieces of the spherical waves originate from. 
}
\label{fig_3waves}
\end{figure}

Paradoxically, Theorem \ref{thm:main} is open for the linear case, $\kappa=0$, but a positive solution is known if $A$ and $B$ are supposed to be time-independent \cite{KOP}. 
In the time-dependent case there are results \cite{Salazar} available only in the abelian case of a line bundle, $n=1$, and it is an open problem if recovery of $A$ in the optimal causal diamond $\mathbb D$ is possible in this case.
Let us also mention that the linear, abelian, time-independent case has been studied extensively, see e.g. \cite{Anderson2004,Belishev1992,Katchalov2001}, but these results do not carry over to the time-dependent case. The reason for this is that they (and also above mentioned \cite{KOP}) are based on Tataru's unique 
continuation principle \cite{Tataru}, that again is known to fail for equations with time-dependent coefficients \cite{Alinhac1983}.

We emphasize once more that 
the focus of the current 
paper is on the recovery of the lower order terms in a non-linear wave equation. This makes definite progress towards Open Problem 5 in \cite{Matti}, and is different from the previous results where only the determination of the leading order terms is considered.
See for instance \cite{KLU, LUW} where the  determination of the metric tensor (or its conformal class) is studied for
scalar-valued non-linear equations, or \cite{KLOU,Lassas2017} 
 where the  determination of the metric tensor is studied for
the Einstein equations coupled with different matter field equations.
The difference between recovery of leading and lower order terms is reflected in the key novelty of our approach, namely, in the study of principal symbols instead of wave front sets.

Such difference is apparent 
also in the existing theory of inverse problems for linear wave equations.
The case of a linear wave equation 
with time independent coefficients, and with sources and observations in disjoint sets, illustrates this. 
In this case the theory is still under active development, and the best results available are very different for leading and lower order terms:
the recovery of the metric \cite{LO_Duke} is based on distance functions, whereas the recovery of the lower-order terms \cite{KKLO} is based on focussing of waves. The latter also requires additional convexity assumptions, that are not present in the former case. 

\subsection{A conjecture on higher order non-linearities}

One outcome of the current paper is the following emergent principle for dealing with inverse problems for waves with polynomial non-linearities using an approach similar to ours. Assume for simplicity that we are in the line bundle case (i.e. $n=1$) and consider an equation of the form
    \begin{align}\label{eq_emergent_prin}
\Box_{A}\phi+\mathcal \kappa \phi^{N}=f,
    \end{align}
where $\kappa\neq 0$. Then to recover $A$ (up to gauge) from a source-to-solution map it is necessary to consider the $J$-fold linearization of (\ref{eq_emergent_prin}), where 
    \begin{align}\label{lin_lower_bound}
J\geq \max\{3,N\}.
    \end{align}
The necessity is discussed further in Remarks \ref{rem_two_not_enough} and \ref{rem_lin_N} below.
We conjecture that (\ref{lin_lower_bound}) is also a sufficient condition, but the present paper establishes this only in the case $N = 3$.

\subsection{Outline of the paper}

This paper is organized as follows. Section \ref{sec_prelim} contains preliminaries mostly
having to do with the direct problem  \eqref{wave_nonlin}.
Section \ref{micro_3} contains the microlocal analysis for the interaction of three waves and shows that we can recover the broken non-abelian light ray transform along  lightlike geodesics from the knowledge of $L_{A}$.
Section \ref{sec_X-ray} solves the geometric inverse problem of determining $A$ up to gauge from the broken non-abelian light ray transform and completes the proof of Theorem \ref{thm:main}. Appendix \ref{appendix} recalls the theory of conormal and Intersecting Pair of Lagrangian (IPL) distributions; Appendix \ref{appendix2} contains certain technical details concerning symplectic transformations to a model pair of intersecting Lagrangians, and Maslov bundles;
and Appendix \ref{appendix3} gives full description of the wave front set of the cross-derivative $\partial_{\epsilon_1} \partial_{\epsilon_2} \partial_{\epsilon_3} \phi|_{\epsilon = 0}$, that is, the red surface in Figure \ref{fig_3waves}.

\bigskip
\noindent
{\em We would like to dedicate this paper to the memory of our friend and
colleague Slava Kurylev
who was instrumental in initiating the present line of research on
inverse problems for the Yang-Mills-Higgs equations.
}

\bigskip

\noindent {\bf Acknowledgements.} 
LO thanks Allan Greenleaf, Alexander Strohmaier and Gunther Uhlmann for discussions on microlocal analysis. 

ML was supported by Academy of Finland grants
320113 and 312119.
LO was supported by EPSRC grants EP/P01593X/1 and EP/R002207/1 and	
XC and GPP were supported by EPSRC grant EP/R001898/1. GPP thanks the University of Washington for hospitality while this work was in progress and the Leverhulme trust for financial support.

\section{Preliminaries}\label{sec_prelim}

In this section, to accommodate further work, we let $(M,g)$ be an arbitrary, globally hyperbolic Lorentzian manifold of dimension $1+m$. 
Also $E$ can be taken as an arbitrary Hermitian vector bundle over $M$.
Recall that a Lorentzian manifold $(M, g)$ is globally hyperbolic if 
there are no closed causal paths in $M$, and 
the causal diamond $\mathscr J^{+}(x) \cap \mathscr J^{-}(z)$ is compact for any pair of points $x,z \in M$, see \cite{Bernal2007}.
A globally hyperbolic manifold $(M,g)$ is isometric to a product manifold $\mathbb{R} \times M_0$ with the Lorentzian metric given by
\begin{equation}
\label{gl_hyp_form}
g = -c(t,x') dt^2 + g_0(t,x'),
\quad (t,x') \in \mathbb{R} \times M_0,
\end{equation} 
where $c : \mathbb{R} \times M_0 \rightarrow \mathbb{R}^+$ is smooth and $g_0$ is a Riemannian metric on $M_0$ depending smoothly on $t$, see \cite{Bernal2005}.
Moreover, the vector field $\p_t$ gives time-orientation on $M$.
To simplify the discussion, we make the further assumption that all the geodesics of $(M,g)$ are defined on the whole $\R$.

\subsection{Direct problem}

We write occasionally $\nabla = d + A$ for the covariant derivative associated to the connection $A$, and view it as a map 
$$\nabla : C^\infty(M; E) \longrightarrow C^\infty(M; T^\ast M \otimes E).
$$
Writing $g = g_{ij}dx^idx^j$ in coordinates, we denote by $|g|$ and $g^{ij}$ the determinant and inverse of $g_{ij}$, respectively.
Moreover, $A = A_j dx^j$ is a $1$-form, and each $A_j$ is a skew-Hermitian matrix.
Let us now write the wave operator $\Box_A = \nabla^* \nabla$ in coordinates. 
Consider compactly supported sections $\phi$ and $\psi = \psi_j dx^j$ of $E$ and $T^\ast M \otimes E$, respectively. 
Then
$$
\langle \nabla \phi, \psi \rangle_{L^2(M; T^\ast M \otimes E)} = \int_{M} g^{ij}\langle (\p_{x^i} \phi + A_i \phi), \psi_j\rangle_E\,dV_g,$$
where $\langle \cdot, \cdot \rangle_E$ is the inner product on $E$, and $dV_g$ denotes the volume form on $(M,g)$.
Integrating by parts and using the fact that $A$ is skew-Hermitian, we have
    \begin{align*}
\langle d\phi, \psi \rangle_{L^2(M; T^* M \otimes E)} 
&=-\int_{M} \langle  \phi, |g|^{-1/2} \partial_{x^i}\big(|g|^{1/2} g^{ij} \psi_j\big) \rangle_E\,dV_g,
\\
\langle A\phi, \psi \rangle_{L^2(M; T^* M \otimes E)} 
&= - \int_{M} \langle \phi, g^{ij} A_i  \psi_j\rangle_E\,dV_g.
    \end{align*}
Consequently, $\Box_A \phi$ takes the form $$ - |g|^{-1/2} \partial_{x^i}\big(|g|^{1/2} g^{ij} \partial_{x^j}\phi\big) - 2 g^{ij}A_i \partial_{x^j}\phi  - |g|^{-1/2} \partial_{x^i}\big(|g|^{1/2} g^{ij}A_j\big) \phi - g^{ij}A_i A_j\phi.$$

\begin{remark}
To prove Theorem \ref{thm:main}, we will need the operator $\Box_{A}$ exclusively in Minkowski space where we can explicitly write
    \begin{align}\label{boxA_mink}
\Box_{A}\phi=\Box \phi-2\left(-A_{0} \partial_t \phi +\sum_{j=1}^{3}A_{j} \partial_{x^{j}} \phi \right)+\left(\text{div}A+A_{0}^{2}-\sum_{j=1}^{3}A_{j}^{2}\right)\phi,
    \end{align}
where $\Box$ and $\text{div}$ are the usual wave operator and divergence, 
\[\Box \phi = \partial_{t}^2 \phi -\sum_{j=1}^{3}\partial_{x^j}^2 \phi,
\quad \text{div} A= \partial_t A_{0} -\sum_{j=1}^{3} \partial_{x^{j}} A_{j}.
\]
\end{remark}

Let $T>0$ and let us consider the following nonlinear Cauchy problem 
\begin{equation}\label{eq_wave_lin T}
\begin{cases}
\Box_A \phi(t,x) = H(t,x, \phi(t,x))+f(t,x), & \hbox{on}\,(-\infty,T)\times M_0,\\ \phi = 0, & \hbox{for}\,\, t < 0,
\end{cases}
\end{equation} 
where $H :(\R\times M_0) \times E \to E$ is a smooth map operating section-wise such that $H(t,x,0)=0$, and $f$ is a section of $E$. We will now give sufficient conditions on $f$ in order for (\ref{eq_wave_lin T}) to have a unique solution.

As the leading term of $\Box_A$ is simply the canonical wave operator on $(M,g)$, acting on each component of $\phi$, we can use the standard results for quasilinear hyperbolic equations to show existence and uniqueness of solutions to this Cauchy problem. See, for example, Theorem 6 of \cite{Kato1975} and its proof (with the notations explained in detail in Appendix C in \cite{KLU2}) or Theorems I-III and the proof of Lemma 2.7 in \cite{HKM}. By these results, we have that for an integer $r>m/2+2$ and any compact set $\mathcal K\subset (0,T) \times M_0$, there is $\epsilon_0>0$ such that 
for any 
$f\in C^r_0(\mathcal K)$ 
satisfying $\|f\|_{C^r(\mathcal K)}<\epsilon_0$, the initial value problem (\ref{eq_wave_lin T}) has a unique solution. 
Recall that  $m$ is the dimension of the underlying space $M_0$.
In particular, in the case of the Minkowski space $\R^{1+3}$, we may take $r = 4$, and see that source-to-solution map $L_A$ is well-defined by (\ref{def_L}).

\subsection{Notations for microlocal analysis}
\label{sec_muloc_notations}

For a conic Lagrangian submanifold 
$\Lambda_0 \subset T^*M \setminus 0$ and a vector bundle $\mathbb E$ over $M$, 
we denote by $I^{p}(M;\Lambda_0;\mathbb E)$ the space of Lagrangian distributions of order $p \in \R$ associated to $\Lambda_0$,
and taking values in $\mathbb E$. 
If $\Lambda_1 \subset T^*M \setminus 0$
is another conic Lagrangian submanifold intersecting $\Lambda_0$ cleanly, we denote by $I^{p}(M;\Lambda_0,\Lambda_1;\mathbb E)$ the space of Intersecting Pair of Lagrangian (IPL) distributions of order $p \in \R$ associated to $(\Lambda_0, \Lambda_1)$,
and taking values in $\mathbb E$. 

We use occasionally the notation $\pair{\cdot, \cdot}$ for the duality pairing between covectors and vectors, and 
    \begin{align*}
I(M;\Lambda_0;\mathbb E) = \bigcup_{p \in \R} I^p(M;\Lambda_0;\mathbb E), \quad
I(M;\Lambda_0,\Lambda_1;\mathbb E) = \bigcup_{p \in \R} I^{p}(M;\Lambda_0,\Lambda_1;\mathbb E).
    \end{align*}
If $\Lambda_0$ coincides with the conormal bundle 
    \begin{align*}
N^* K = \{ (x,\xi) \in T^* M : x \in K,\ \pair{\xi, v} = 0 \text{ for all $v \in T_x K$}\}
    \end{align*}
of a submanifold $K \subset M$ in the sense that $\Lambda_0 = N^* K \setminus 0$, then the distributions in $I(M;\Lambda_0;\mathbb E)$ are called conormal distributions. Although removing the zero section from $N^* K$, when considering it as a conic Lagrangian manifold, is somewhat awkward notationally, it is natural to consider $N^* K$ as a submanifold of $T^* M$, since then the fibres $N_x^* K \subset T_x^* M$, $x \in K$, are linear subspaces.
We recall the basic properties of conormal and IPL distributions in Appendix \ref{appendix} below.

The wave front set of a distribution $u \in \mathcal D'(M)$ is denoted by $\WF(u)$, see \cite[Def. 2.5.2]{Hormander-FIO1}. It is a subset of $T^* M \setminus 0$, and its projection on $M$ is called the singular support     $\singsupp(u)$ of $u$. The wave front set $\WF(u)$ is conical and closed in $T^* M \setminus 0$, and it is occasionally convenient to use the notation 
    \begin{align*}
\ccl \mathscr B = \{(x,\lambda \xi) \in T^* M \setminus 0 : (x, \xi) \in \overline{\mathscr B},\ \lambda > 0\}
    \end{align*}
for the conical closure of a set $\mathscr B \subset T^* M \setminus 0$. For $u \in I(M;\Lambda_0;\mathbb E)$
it holds that $\WF(u) \subset \Lambda_0$.

If $\mathscr K$ is the Schwartz kernel of a pseudodifferential operator $\chi$ on $M$,
then the projection of $\WF(\mathscr K) \subset (T^*M \setminus 0)^2$ on the first factor $T^*M\setminus 0$ is called the essential support of $\chi$. 
(As $\WF(\mathscr K)$ is contained in the conormal bundle of the diagonal $\{(x,y) \in M^2 : x = y\}$, the choice between the first and second factor makes no difference.)
Following \cite[p. 124]{Hormander-FIO1} we write $\WF(\chi)$ for this set. 

We denote by $\Omega^{1/2}$ the half-density bundle over $M$. When $\Lambda_0$ and $\Lambda_1 \setminus \Lambda_0$ coincide with conormal bundles, and $\mathbb E = E \otimes \Omega^{1/2}$, there is a coordinate invariant way to define the principal symbol $\sigma[u]$ of $u \in I(M;\Lambda_0;\mathbb E)$, respectively $u \in I(M;\Lambda_0,\Lambda_1;\mathbb E)$, as an equivalence class of sections of $E \otimes \Omega^{1/2}$ over $\Lambda_0$, respectively $\Lambda_1 \setminus \Lambda_0$.
We will not emphasize the difference between the equivalence class $\sigma[u]$ and a representative of it, and we will also use the same notation for the half-density bundles over $M$ and $\Lambda_j$, $j=0,1$. Let us remark that there is typically no natural way to relate these bundles. For example, while it is natural to use $|g|^{1/4}$ to trivialize $\Omega^{1/2}$ over $M$, the Lorentzian metric $g$ on $M$ typically does not induce a natural trivialization of $\Omega^{1/2}$ over $\Lambda_j$.

For IPL distributions in $I(M;\Lambda_0,\Lambda_1;\mathbb E)$, there is also a refined notion of principal symbol, with components on both $\Lambda_0$ and $\Lambda_1$. 
We will use the refined principal symbol only in Appendix \ref{appendix}. 
The notation $\sigma[\chi]$ is used also for the principal symbol of a pseudodifferential operator $\chi$ on $M$. In this case, $\sigma[\chi]$ is represented by a section of $T^* M \setminus 0$.

\subsection{Microlocal analysis of the wave operator}
\label{sec_muloc_linear}

It is convenient to rescale (\ref{wave_nonlin}), and consider the following non-linear wave operator
$$
Q_{0}(\phi) = \frac 1 2 (\Box_A \phi + \kappa | \phi |^2 \, \phi),
$$
where $|\cdot| = |\cdot|_E$ is the norm with respect to the inner product $\langle \cdot, \cdot \rangle = \langle \cdot, \cdot \rangle_E$.
In order to make use of the microlocal machinery developed in \cite{Duistermaat-Hormander-FIO2}, we conjugate the operator $Q_{0}$ with the half density $|g|^{1/4}$ and consider the operator $Q(u) := |g|^{1/4} Q_{0}(|g|^{-1/4} u)$ acting on the sections of $E \otimes \Omega^{1/2}$.
Writing $P u = |g|^{1/4} \Box_A (|g|^{-1/4} u) / 2$, the operator $Q$ reads  
    \begin{align}\label{def_Q}
Q(u) = Pu + \frac \kappa 2 \big| |g|^{-1/4} u \big|^2 u. 
    \end{align}
For the sake of convenience, we will slightly abuse the notation, and write 
$$
uv = |g|^{1/4} ((|g|^{-1/4} u)(|g|^{-1/4} v))
$$
for products of half-densities as functions. Then $Q(u) = Pu + \kappa | u |^2 u/2$.

Writing $\imath = \sqrt{-1}$, the full symbol of the operator $P$ reads $$P(x, \xi) = g^{ij}\xi_i\xi_j/2 +  \imath^{-1} (\partial_{x^i} g^{ij}/2 +  g^{ij}A_i )\xi_j  + \mbox{zeroth order terms},$$ 
and we see that $P$ has the following principal symbol $\sigma[P]$ and subprincipal symbol $\sigma_{sub}[P]$, in the sense of Duistermaat and H\"ormander \cite[p.189 (5.2.8)]{Duistermaat-Hormander-FIO2},
    \begin{align}\label{symbols_of_P}
\sigma[P](x,\xi) =  g^{ij}\xi_i\xi_j/2, \quad \sigma_{sub}[P](x,\xi) =  \imath^{-1} g^{ij}A_i \xi_j, \quad (x,\xi) \in T^* M.    
    \end{align}
We write also $\sigma[P] = \pair{\xi, \xi}_g / 2$
where $\pair{\xi, \xi}_g$ denotes the inner product with respect to $g$.
Let us remark that the subprincipal symbol transforming as a connection is discussed in \cite{Strohmaier} in the more general context of pseudodifferential operators on vector bundles. 

We denote by $H_P$ the Hamiltonian vector field  associated to $\sigma[P]$, and by $\Sigma(P)$ the characteristic set of $P$. That is,
    \begin{align}
    \label{def_H}
H_P &= g^{ij} \xi_j \p_{x^i} - \frac 1 2 (\p_{x^i} g^{jk}) \xi_j \xi_k \p_{\xi^i},
\\\notag
\Sigma(P) &= \{ (x,\xi) \in T^*M \setminus 0 : \pair{\xi, \xi}_g = 0\}.
    \end{align}
The covectors $\xi$ satisfying $\pair{\xi, \xi}_g = 0$ are called lightlike. 
We denote by $\Phi_s$, $s \in \R$, the flow of $H_P$, and define for a set $\mathscr B \subset \Sigma(P)$ the future flowout of $\mathscr B$ by
\begin{align}
\label{def_flowout}
\{ (y,\eta) \in \Sigma(P);\ 
(y,\eta) = \Phi_s(x,\xi),\ s \in \R,\ (x,\xi) \in \mathscr B,\ y \ge x \}.
\end{align}

Let us recall the parametrix construction for the linear wave equation
    \begin{align}\label{eq_wave_lin}
\begin{cases}
Pu = f, & \text{in $\R \times M_0$},
\\
u|_{t < 0} = 0,
\end{cases}
    \end{align}
that originates from \cite{Duistermaat-Hormander-FIO2}. 
We will follow the purely symbolic construction from \cite{Melrose-Uhlmann-CPAM1979}, the only difference being that $u$ is vector valued in our case. 
For the convenience of the reader, we give a proof of the below theorem in Appendix \ref{appendix}. 

\begin{theorem}\label{th_parametrix}
Let $f \in I^k(M; \Lambda_0; E \otimes \Omega^{1/2})$ for a conic Lagrangian $\Lambda_0 \subset T^* M \setminus 0$. 
Suppose that $H_P$ is nowhere tangent to $\Lambda_0$,
write $\mathscr B = \Lambda_0 \cap \Sigma(P)$, and denote by $\Lambda_1$ the future flowout of $\mathscr B$.
Then the solution $u$ of (\ref{eq_wave_lin}) is in $I^{k - 2 + 1/2}(M; \Lambda_0, \Lambda_1; E\otimes \Omega^{1/2} )$. Moreover, 
    \begin{align}\label{transport_Lie}
&(\mathscr{L}_{H_P} + \imath \sigma_{sub}[P]) \sigma[u] = 0, &\text{on $\Lambda_1 \setminus \Lambda_0$},
\\\label{transport_init}
&\sigma[u] = \mathscr R((\sigma[P])^{-1} \sigma[f]),
&\text{on $\mathscr B$},
    \end{align}
where $\mathscr{L}_{H_P}$ is the Lie derivative with respect to $H_P$, $\sigma[u]$ and $\sigma[f]$ are the principal symbols of $u$ and $f$ on $\Lambda_1$ and $\Lambda_0$, respectively, and $\mathscr R$ is a map, defined by (\ref{def_R}) in Appendix \ref{appendix} below, 
    \begin{align*}
\mathscr R: S^{k - 1/2 + n/4}(\Lambda_0 \setminus \partial \Lambda_1; \Omega^{1/2}) \longrightarrow S^{k + n/4}(\Lambda_1; \Omega^{1/2})|_{\partial\Lambda_1},
    \end{align*}  
that acts as a multiplication by a scalar on $E$.
Here it is assumed that $\Lambda_0$ and $\Lambda_1 \setminus \Lambda_0$ coincide with conormal bundles.
\end{theorem}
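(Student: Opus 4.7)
The plan is to follow the symbolic parametrix construction of Melrose and Uhlmann \cite{Melrose-Uhlmann-CPAM1979}, adapted to the vector-valued setting, and then promote the parametrix to an exact solution via the forward fundamental solution of $P$, which exists thanks to global hyperbolicity of $(M,g)$. First I would record the geometric setup: because $H_P$ is nowhere tangent to $\Lambda_0$, the intersection $\mathscr B = \Lambda_0 \cap \Sigma(P)$ is a smooth hypersurface in $\Lambda_0$ along which $\sigma[P]$ vanishes simply, and the future flowout $\Lambda_1$ is a smooth conic Lagrangian meeting $\Lambda_0$ cleanly along $\mathscr B = \Lambda_0 \cap \Lambda_1$. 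This places us squarely in the IPL calculus associated with $(\Lambda_0,\Lambda_1)$ recalled in Appendix \ref{appendix}.

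Next I would seek $u$ as $u_0 + u_1$ with $u_0 \in I^{k-2}(M;\Lambda_0;E\otimes\Omega^{1/2})$ and $u_1 \in I^{k-2+1/2}(M;\Lambda_0,\Lambda_1;E\otimes\Omega^{1/2})$, solving $Pu \equiv f$ modulo a smoothing remainder. Off $\mathscr B$ the scalar symbol $\sigma[P]$ is elliptic on $\Lambda_0$, so the leading-order equation $\sigma[P]\sigma[u_0] = \sigma[f]$ forces $\sigma[u_0] = (\sigma[P])^{-1}\sigma[f]$, a symbol on $\Lambda_0$ with a simple pole along $\mathscr B$; this is exactly the data for an IPL symbol on $\Lambda_0$ in the sense of Appendix \ref{appendix}. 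The IPL correction $u_1$ is then built to annihilate the error produced by $Pu_0$ on the flowout side. Because $\sigma[P]$ vanishes on $\Lambda_1 \subset \Sigma(P)$, the principal symbol of $Pu_1$ on $\Lambda_1\setminus\Lambda_0$ is of one order lower and, by the Duistermaat-H\"ormander formula \cite{Duistermaat-Hormander-FIO2}, is proportional to $(\mathscr L_{H_P} + \imath\sigma_{sub}[P])\sigma[u_1]$; setting this to zero gives (\ref{transport_Lie}), with the vector-valued nature of $u$ entering only through the connection term $\imath\sigma_{sub}[P] = g^{ij}A_i\xi_j$ from (\ref{symbols_of_P}).

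To fix the initial condition on $\mathscr B$, I would use the compatibility built into the IPL calculus: the restriction to $\Lambda_1\setminus\Lambda_0$ of $\sigma[u]$, taken in the limit as one approaches $\mathscr B$, must equal the residue of $\sigma[u_0]$ along $\mathscr B$. Since this residue is, by the previous paragraph, the product of $(\sigma[P])^{-1}\sigma[f]$ and the symplectic/Maslov prefactors that convert a conormal residue into a flowout symbol, the identity (\ref{transport_init}) drops out once $\mathscr R$ is defined as in Appendix \ref{appendix}. Crucially, $\mathscr R$ comes from a purely symplectic residue computation and acts as scalar multiplication on $E$; this is what lets the transport equation on $\Lambda_1$ see the connection $A$ only through $\sigma_{sub}[P]$. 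Finally, causality: writing $u = G_+ f$ with $G_+$ the forward fundamental solution, one has $u|_{t<0}=0$ automatically, and the parametrix differs from $G_+ f$ by a smooth section, so the singularity structure derived above coincides with that of the true solution.

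The hard part will be the bookkeeping of orders, half-densities, and Maslov factors that ultimately define $\mathscr R$: one must track how the simple zero of $\sigma[P]$ along $\mathscr B$ becomes a simple pole of the $\Lambda_0$-component of the IPL symbol, and then convert this residue into a half-density on $\Lambda_1$ near $\partial\Lambda_1 = \mathscr B$. These factors are symplectic-geometric and independent of the bundle $E$, so once they are isolated in the scalar theory the vector-valued case follows by componentwise application, the only bundle-dependent contribution being the connection appearing as $\imath\sigma_{sub}[P]$ in (\ref{transport_Lie}). This is precisely the feature that will, in Section \ref{micro_3}, allow the principal symbols of interacting waves to encode parallel transport of $A$ along broken light rays.
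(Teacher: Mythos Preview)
Your proposal is correct and follows essentially the same route as the paper: a symbolic parametrix \`a la Melrose--Uhlmann, with the $\Lambda_0$-symbol determined by ellipticity as $(\sigma[P])^{-1}\sigma[f]$, the $\Lambda_1$-symbol by the transport equation $(\mathscr L_{H_P}+\imath\sigma_{sub}[P])\sigma[u]=0$, and the compatibility at $\mathscr B=\partial\Lambda_1$ encoded by $\mathscr R$, followed by correction to an exact solution via the forward propagator. The only cosmetic difference is that the paper builds each approximant $u_{(j)}$ directly as a single IPL distribution (whose two-component principal symbol is prescribed on $\Lambda_0\setminus\partial\Lambda_1$, on $\partial\Lambda_1$, and on $\Lambda_1$), whereas you phrase the first step as a split $u_0+u_1$ with $u_0$ a conormal piece carrying the polar symbol; since that polar symbol is precisely the $\Lambda_0$-component of an IPL symbol, the two presentations are equivalent.
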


\subsection{Flowout from a point in the Minkowski space}

The following case will be of particular importance for us. We have also included a detailed discussion of Theorem \ref{th_parametrix} in the context of this example case in Appendix \ref{appendix2}.

\begin{example}\label{ex_f_delta}
Let $(M,g)$ be the $1+3$-dimensional Minkowski space,
and consider a lightlike vector $\xi^0 \in T_0 \R^{1+3} \setminus 0$ of the form $\xi^0 = (1,\theta_0)$ where $\theta_0$ is in the unit sphere
    \begin{align*}
S^2 = \{\theta \in \R^3 : |\theta| = 1\}.
    \end{align*}
Let $c \in E_0 \setminus 0$, that is, $c$ is a non-zero vector in the fibre of $E$ over the origin, and let $\chi$ be a pseudodifferential operator such that $\sigma[\chi] \ne 0$ near $\ccl \{(0,\xi^0)\}$. We define 
    \begin{align*}
f \in I(M; \Lambda_0; E \otimes \Omega^{1/2}), \quad f = c |g|^{1/4} \chi \delta,
    \end{align*}
where $\Lambda_0 = T_0 \R^{1+3} \setminus 0 = N^* \{0\} \setminus 0$ and $\delta$ is the Dirac delta distribution at the origin. 
The corresponding future flowout $\Lambda_1$
satisfies $\Lambda_1 \setminus \Lambda_0 \subset N^* K \setminus 0$ where 
    \begin{align}\label{K_delta}
K = \{(t, t \theta) \in \R^{1+3}: t > 0,\ \theta \in S^2\}
    \end{align}
is the future light-cone in the spacetime $\R^{1+3}$ emanating from the origin.
Letting $u$ be the solution of (\ref{eq_wave_lin}),
its restriction on $\R^{1+3} \setminus 0$ 
is a conormal distribution in $I(\R^{1+3} \setminus 0; N^* K \setminus 0; E \otimes \Omega^{1/2})$. 
As $\sigma[f](0,\xi^0) \ne 0$, Theorem \ref{th_parametrix}
implies that for all $s > 0$ it holds that $\sigma[u](\gamma(s), \xi^0) \ne 0$
where $\gamma(s) = (s,s\theta_0)$, and $\xi^0$ is viewed also as an element of $T_{\gamma(s)} \R^{1+3}$.
The smaller the essential support $\WF(\chi)$ is chosen around 
$$
\ccl(0,\pm\xi^0) := \ccl \{(0,\xi^0), (0,-\xi^0)\},
$$
the smaller is $\singsupp(u) \subset \overline K = K \cup \{0\}$ around 
$$
\gamma(\R_+) := \{\gamma(s) : s \ge 0\}.
$$

The pseudodifferential operator $\chi$ can be chosen for example as follows. Choose functions $\chi_1 \in C^\infty(S^2)$, $\chi_2 \in C^\infty(\R)$ and $\chi_3 \in C^\infty(\R^{1+3})$ such that $\chi_1(\theta_0) = 1$, $\chi_2(1) = 1$ and $\chi_3(0) = 1$.
Let also $q \in \R$.
Then, writing $\xi = (\xi_0, \xi')$
and $c(\xi) = (\sqrt 2 |\xi|)^{-1}$, with $|\xi|$ the Euclidean norm of $\xi$,
we define the function
    \begin{align}\label{ex_f_delta_chi}
\chi_0(x,\xi) = \chi_3(x) \chi_2(c(\xi) \xi_0 ) \chi_1( c(\xi) \xi') |\xi|^q.
    \end{align}
Now $\chi_0$ is positively homogeneous of degree $q$.
Choose, furthermore, $\chi_4 \in C_0^\infty(\R^{1+4})$ such that $\chi_4 = 1$ near the origin.
Then $(1-\chi_4(\xi)) \chi_0(x,\xi)$ is smooth also near $\xi=0$, and it is a symbol in the sense of \cite[Def. 1.1.1]{Hormander-FIO1}. Now we define a pseudodifferential operator by
$$
\chi u = \int_{\R^{1+4}} e^{\imath \xi x} (1-\chi_4(\xi)) \chi_0(x,\xi) \hat u(\xi) d\xi.
$$
Ignoring $2\pi$ factors, the full symbol of $\chi$ is simply $(1-\chi_4(\xi)) \chi_0(x,\xi)$, and the principal symbol $\sigma[\chi]$ is the corresponding equivalence class modulo symbols of one degree lower order. 

Choose now another $\tilde \chi_4 \in C_0^\infty(\R^{1+4})$ such that $\tilde \chi_4 = 1$ near the origin. Then 
$$
(1-\chi_4(\xi)) \chi_0(x,\xi)
- (1-\tilde \chi_4(\xi)) \chi_0(x,\xi)
= (\tilde \chi_4(\xi)-\chi_4(\xi)) \chi_0(x,\xi)
$$
is smooth near $\xi = 0$ since $\tilde \chi_4-\chi_4 = 0$ there. Moreover, it is a symbol of order $-\infty$ as both $\chi_4$ and $\tilde \chi_4$ are compactly supported. Therefore also $(1-\tilde \chi_4(\xi)) \chi_0(x,\xi)$ is a representative of $\sigma[\chi]$.
As the support of $\chi_4$ can be chosen arbitrarily small it makes sense to say that $\sigma[\chi] = \chi_0$
although this does not quite hold for any representative. Also for $q = 0$ it holds, in this sense, that 
$\sigma[\chi] = 1$ in $\ccl\{(0,\xi^0)\}$.

Observe that $\WF(\chi)$ becomes small around $\ccl\{(0,\xi^0)\}$ when the supports of $\chi_j$, $j=1,2,3$, are small around $\theta_0$, $1$, and $0$ respectively. Also $\chi u(x) = 0$ for any $x$ outside the support of $\chi_3$, and for any $k \in \N$ we can choose $q < 0$ negative enough, so that $\chi\delta \in H^k(\R^{1+3})$.

Let us check that indeed $\Lambda_1 \setminus \Lambda_0 = N^* K \setminus 0$.
The lightlike vectors in $T_0 \R^{1+3} \setminus 0$ are given by 
$(\lambda , \lambda \theta)$ with $\lambda \in \R \setminus 0$ and $\theta \in S^2$.
In the Minkowski space, the tangent-cotangent isomorphism corresponds to changing the sign of the first component. Therefore,
$$
\Lambda_1 \setminus \Lambda_0 
= \{(s\lambda, s \lambda\theta; -\lambda , \lambda \theta) \in \R^{1+3} \times \R^{1+3} : \lambda \in \R \setminus 0,\ \theta \in S^2,\ s\lambda > 0\}.
$$
We can also reparametrize 
$$
\Lambda_1 \setminus \Lambda_0 
= \{(t, t\theta; -\lambda , \lambda \theta) \in \R^{1+3} \times \R^{1+3} : \lambda \in \R \setminus 0,\ \theta \in S^2,\ t > 0\}.
$$
Clearly the projection of $\Lambda_1 \setminus \Lambda_0$ on the base space $\R^{1+3}$ is $K$.

For the convenience of the reader, we will still compute explicitly the conormal bundle of $K$.
Toward that end, we choose local coordinates $\R^2 \supset B \ni a \mapsto \Theta(a) \in S^2$ on $S^2$
and see that the tangent space of $K$ at $(t,\theta_0)$  is given by the range of
    \begin{align}\label{K_param}
\begin{pmatrix}
1 & 0
\\
\theta_0 & t\frac{d \Theta}{d a}
\end{pmatrix}.
    \end{align}
Writing $\theta_0^\perp = \{v \in \R^3 : v \cdot \theta_0 = 0\}$
where $\cdot$ denotes the Euclidean inner product, we have that $\theta_0^\perp$ is the range of $d\Theta/da$.
Writing $(\xi_0, \xi') \in N^*_{(t,\theta_0)} K$, this fibre is characterized by
$$
\xi_0 \delta t + \xi' \theta_0 \delta t + \xi' \delta \theta = 0, \quad \delta t \in \R,\ \delta \theta \in \theta_0^\perp.
$$
Taking first $\delta t = 1$ and $\delta \theta = 0$ 
we have $\xi_0 = - \xi' \theta_0$. Letting then $\delta \theta$ vary we see that $\xi' \perp \theta_0^\perp$, that is, $\xi' = \lambda \theta_0$ where $\lambda \in \R$. Here we can view $\theta_0 \in \R^3$ as a covector since the tangent-cotangent isomorphism in $\R^3$ is the identity. Hence
$$
N^*_{(t,\theta_0)} K \setminus 0
= \{ (-\lambda, \lambda \theta_0) : \lambda \in \R \setminus 0 \},
$$
and indeed $\Lambda_1 \setminus \Lambda_0 = N^*K \setminus 0$.

Observe that $\Lambda_1 \setminus \Lambda_0$
is embedded in the following smooth submanifold of $T^* M \setminus 0$, that is the flowout to both past and future
    \begin{align}\label{Lambda_1_pastfuture}
\hat\Lambda_1 = \{(t, t\theta; -\lambda , \lambda \theta) \in \R^{1+3} \times \R^{1+3} : \lambda \in \R \setminus 0,\ \theta \in S^2,\ t \in \R\}.
    \end{align}
Note that while $K$ is singular at $t=0$, we see that $\hat\Lambda_1$ is not by considering the derivative analogous to (\ref{K_param}),
$$
\begin{pmatrix}
1 & 0 & 0
\\
\theta_0 & t\frac{d \Theta}{d a} & 0
\\
0 & 0 & -1 
\\
0 & \lambda\frac{d \Theta}{d a} & \theta_0 
\end{pmatrix}.
$$
This matrix is injective since $\lambda \ne 0$.
\end{example}

We will next write the transport equation (\ref{transport_Lie}) for the principal symbol $\sigma[u]$ as a parallel transport equation with respect to the covariant derivative $\nabla$,
and we begin by discussing $\Omega^{1/2}$ over the flowout $\Lambda_1$.

\subsection{Trivialization of the half-density bundle over the flowout}
\label{sec_halfdens_flowout}

We want to trivialize $\Omega^{1/2}$ in a way that preserves homogeneity properties, as possessed for example by $\chi_0$ in (\ref{ex_f_delta_chi}). Let us point out that, even in the context of Example \ref{ex_f_delta}, there appears to be no canonical choice of a non-vanishing section of $\Omega^{1/2}$ over the conormal bundle $\Lambda_1 \setminus \Lambda_0 = N^* K \setminus 0$.
For example, the Sasaki metric on $T^* \R^{1+3}$, associated with the Minkowski metric, is degenerate when restricted on $N^* K \setminus 0$.

The submanifold $K$ in Example \ref{ex_f_delta} is of codimension one in $\R^{1+3}$. This holds in general in the sense that, if $\Lambda_0$ and $\Lambda_1$ in Theorem \ref{th_parametrix} satisfy 
    \begin{align}\label{def_K}
\Lambda_1 \setminus \Lambda_0 = N^* K \setminus 0
    \end{align}
for a submanifold $K \subset M$, then $K$ is of codimension one. This can be seen as follows. Observe first that $\Lambda_1 \subset \Sigma(P)$ simply because $\Lambda_1$ is the future flowout from  $\Lambda_0 \cap \Sigma(P)$. Therefore, for any $x \in K$, the fibre $N_x^* K$ can contain only lightlike vectors with respect to $g$. On the other hand, if $\xi^1, \xi^2 \in T_x^* M$ are lightlike and linearly independent, then their linear span satisfies, see e.g. \cite[Cor.\ 1.1.5]{SW},
    \begin{align}\label{span_of_two}
\{\xi \in \linspan(\xi^1, \xi^2) : \text{$\xi$ is lightlike} \} = \linspan(\xi^1) \cup \linspan(\xi^2).
    \end{align}
In particular $\linspan(\xi^1, \xi^2)$ contains vectors that are not lightlike, and therefore at most one of $\xi^j$, $j=1,2$, can belong to $N_x^* K$. This shows that $N_x^* K$ is of dimension one, or equivalently, $K$ is of codimension one in $M$.

We will trivialize $\Omega^{1/2}$ over $\Lambda_1$ by choosing a strictly positive half-density $\omega$ in $C^\infty(\Lambda_1; \Omega^{1/2})$ that is positively homogeneous of degree $1/2$. We begin by recalling the definition of positive homogeneity following \cite[p. 13]{Hormander-Vol4}. Let $\lambda \in \R \setminus 0$ and define 
$$
m_\lambda : T^* M \setminus 0 \to T^* M \setminus 0, \quad m_\lambda(x,\xi) = (x, \lambda \xi).
$$
Then $m_\lambda$ restricts as a map on $\Lambda_1$ in a natural way, and we denote the restriction still by $m_\lambda$.
The half-density $\omega$ is said to be positively homogeneous of degree $q \in \R$ if $m_\lambda^* \omega = \lambda^q \omega$ for all $\lambda > 0$.

If in local coordinates 
\begin{equation}\label{K_locally}
K = \{(x^0,x') \in \R^{1+m}: x^0 = 0\},
\end{equation} 
then $N^* K = \{(x^0,x'; \xi_0, \xi') \in \R^{1+m} \times \R^{1+m} : x^0 = 0,\ \xi' = 0\}$. We emphasize that, as the conormal bundle of $K$ coincides with the flowout $\Lambda_1$ in the sense of (\ref{def_K}), the local coordinate $x^0$ is not the time coordinate $t$ in (\ref{gl_hyp_form}). Here $(x,\xi) = (x^0,x'; \xi_0, \xi')$ are the induced coordinates on $T^* M$. Considering the restriction 
$$
m_\lambda :N^* K\setminus 0 \to N^* K\setminus 0, 
$$
the pullback $m_\lambda^* \omega$ is given by 
$$
m_\lambda^* \omega(x',\xi_0) 
= 
\left|\frac{d m_\lambda}{d (x',\xi_0)}\right|^{1/2}
\omega(x',\lambda \xi_0)
= \lambda^{1/2} \omega(x',\lambda \xi_0).
$$
Thus $\omega$ being positively homogeneous of degree $1/2$ means that 
$\omega(x',\lambda \xi_0) = \omega(x', \xi_0)$, moreover, $\omega$ being strictly positive means that $\omega(x',\xi_0) > 0$.
We will, in fact, choose a half-density $\omega$ that is also symmetric in the sense that 
    \begin{align}\label{omega_symmetry}
\omega(x',\lambda \xi_0) = \omega(x', \xi_0),
\quad \lambda \in \R \setminus 0,
    \end{align}
a coordinate invariant formulation of which reads $m_\lambda^* \omega = |\lambda|^{1/2} \omega$ for all $\lambda \in \R \setminus 0$.

In general, a strictly positive half-density $\omega \in C^\infty(\Lambda_1; \Omega^{1/2})$ satisfying (\ref{omega_symmetry}) can be constructed by choosing an auxiliary Riemannian metric on $M$, restricting the associated Sasaki metric $h$ on $T^* M \setminus 0$ to $\Lambda_1$, and taking $\omega = |h|^{1/4}$.
In particular, when $(M,g)$ is the Minkowski space, it feels natural to choose the Euclidean metric on $M$.

\subsection{Parallel transport equation for the principal symbol}

Let us fix a strictly positive half-density $\omega$ over $\overline{N^* K}$ satisfying (\ref{omega_symmetry}). 
Here the closure is taken in $T^* M$,
and we remark that, in view of (\ref{def_K}), $\Lambda_1 \subset \overline{N^* K}$.

We write $\sigma[u] = a \omega$ where 
$a$ is a section in $C^\infty(N^* K \setminus 0; E)$.
A computation in coordinates shows that $\mathscr{L}_{H_P}(a \omega) = (H_P a)\omega + a \mathscr{L}_{H_P} \omega$. Introducing the notation $\text{div}_\omega H_P = \omega^{-1} \mathscr{L}_{H_P} \omega,$ we write 
$$
\omega^{-1}\mathscr{L}_{H_P}(a \omega) = H_P a + a \text{div}_\omega H_P.
$$ 
We want to further rewrite this as a conjugated differentiation along bicharacteristics, that is, along the flow curves of $H_P$. Recall that 
$\Phi_s$, $s \in \R$, denotes the flow of $H_P$. Writing 
$\beta(s) = \Phi_s(x_0, \xi^0)$ for the bicharacteristic through $(x_0, \xi^0) \in N^*K \setminus 0$, we have 
$$
(H_P a)\circ \beta(s) = \partial_r a(\Phi_r(\beta(s)))|_{r=0} = \partial_s(a\circ\beta)(s).
$$ 
We further write $\alpha = a \circ \beta$, and define
    \begin{align}\label{def_rho}
\rho(s) = \int_0^s\rho'(s') ds', \quad \rho' = \text{div}_\omega H_P \circ \beta.
    \end{align}
Then
$$
\omega^{-1}\mathscr{L}_{H_P}(a \omega)\circ \beta = \partial_s\alpha + \rho'\alpha = e^{-\rho} \partial_s(e^\rho \alpha).
$$ 

We denote by $\gamma$ the projection of the bicharacteristic $\beta$ to the base manifold $M$, and by $\dot \gamma^*$ the tangent vector of $\gamma$ as a covector, that is, 
$\dot \gamma^*_i = g_{ij} \dot \gamma^j$.
It follows from (\ref{def_H}) that $\gamma$ is a geodesic of $(M,g)$, and that $\beta(s) = (\gamma(s), \dot \gamma^*(s))$. As $\beta(s) \in \Sigma(P)$, the geodesic $\gamma$ is lightlike. 
Moreover, using (\ref{symbols_of_P}),
$$
\imath (\sigma_{sub}[P] \circ \beta)(s) = g^{ij}A_i \dot \gamma_j^*(s) = A_i \dot{\gamma}^i(s) = \pair{A, \dot\gamma(s)},
$$ 
where $\pair{\cdot, \cdot}$ is again the duality pairing between covectors and vectors.

The covariant derivative on the bundle $E$ along the geodesic $\gamma(s)$ is given by 
$$
\nabla_{\dot \gamma} = 
\p_s + \pair{A, \dot\gamma(s)}.
$$
Therefore (\ref{transport_Lie}) along $\beta$ is equivalent with $e^{-\rho} \nabla_{\dot{\gamma}}(e^\rho\alpha) = 0$.
If we define the following  symbol along $\beta$ for conormal distributions in $I(M; N^\ast K \setminus 0; E\otimes \Omega^{1/2})$, 
$$
\sigma_{\omega, \beta}[\cdot] = e^\rho(\omega^{-1}\sigma[\cdot])\circ\beta,
$$ 
then we can rewrite (\ref{transport_Lie}) along $\beta$ as follows
\begin{equation}\label{eqn : parallel transport for symbols} \nabla_{\dot{\gamma}}\sigma_{\omega, \beta} [u] = 0.
\end{equation}
This is the parallel transport equation along $\gamma$ with respect to the connection $A$.

If $x = \gamma(s_0)$ and $y = \gamma(s_1)$ for some $s_0,s_1 \in \R$, then we write $\mathbf{P}^A_{y \gets x} : E_x\to E_y$
for the parallel transport map from $x$ to $y$ along $\gamma$.
That is, $\mathbf{P}^A_{y \gets x} v = w(s_1)$ where $w$ is the solution of 
\begin{equation}\label{parallel_transport}
\begin{cases}
\dot{w} + \pair{A, \dot{\gamma}} w=0, & \text{on $(s_0,s_1)$},
\\
w(s_0)=v.
\end{cases}
\end{equation}
In general, the map $\mathbf{P}^A_{y \gets x}$ depends on the geodesic $\gamma$ joining $x$ and $y$, but not on the parametrization of $\gamma$. We do not emphasize the dependency on $\gamma$ in our notation, since we are mainly interested in the Minkowski case, and in this case $\mathbf{P}^A_{y \gets x}$ depends only on the points $x$ and $y$. To summarize, writing $\xi = \dot\gamma^*(s_0)$ and $\eta = \dot\gamma^*(s_1)$, it follows from (\ref{eqn : parallel transport for symbols}) that 
    \begin{align}\label{P_sigma_u}
e^{\rho(s_1)}(\omega^{-1}\sigma[u])(y,\eta) 
= e^{\rho(s_0)}\mathbf{P}^A_{y \gets x} ( (\omega^{-1}\sigma[u])(x,\xi) ).
    \end{align}

\subsection{Positively homogeneous symbols}

We will now consider how (\ref{P_sigma_u}) changes under rescaling of $\xi \in N_x^* K \setminus 0$, assuming that $\sigma[u]$ is positively homogeneous of degree $q + 1/2 \in \R$ at $(x, \xi)$, that is, 
    \begin{align}\label{poshomog_at_x}
(\omega^{-1}\sigma[u])(x,\lambda\xi)
= \lambda^q (\omega^{-1}\sigma[u])(x,\xi),
\quad \lambda > 0. 
    \end{align}

\begin{proposition}\label{prop_P_rescaling}
Let $\Lambda_j$, $j=0,1$, and $u$ be as in Theorem \ref{th_parametrix}, and suppose that (\ref{def_K}) holds. Let $(x,\xi) \in N^* K \setminus 0$ and $(y,\eta) = \Phi_{s_1}(x,\xi)$ for some $s_1 \in \R$. Suppose that (\ref{poshomog_at_x}) holds at $(x,\pm \xi)$ and that $(y,\eta) \in N^* K \setminus 0$. Then
    \begin{align}\label{P_sigma_u_rescaled}
e^{\rho(s_1)}(\omega^{-1}\sigma[u])(y,\pm \lambda \eta) 
= \lambda^q \mathbf{P}^A_{y \gets x} ( (\omega^{-1}\sigma[u])(x, \pm \xi) ), \quad \quad \lambda > 0.
    \end{align}
\end{proposition}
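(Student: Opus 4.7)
The plan is to exploit the positive homogeneity of $\sigma[P] = \pair{\xi,\xi}_g/2$ in $\xi$, reducing (\ref{P_sigma_u_rescaled}) to the already-established identity (\ref{P_sigma_u}) along a rescaled bicharacteristic. The key fact is that, for $\mu \in \R \setminus 0$ and $m_\mu(x,\xi) = (x,\mu\xi)$, a direct computation from the coordinate expression (\ref{def_H}) of $H_P$, using $(dm_\mu)^{-1}\partial_{\xi^i} = \mu^{-1}\partial_{\xi^i}$, shows that $m_\mu^* H_P = \mu H_P$. Integrating this identity yields the flow rescaling
\[
\Phi_s(x,\mu\xi) = m_\mu \Phi_{\mu s}(x,\xi), \qquad s \in \R,\ \mu \in \R\setminus 0,
\]
so that setting $\mu = \pm\lambda$ with $\lambda > 0$ and recalling $(y,\eta) = \Phi_{s_1}(x,\xi)$ gives $(y,\pm\lambda\eta) = \Phi_{\pm s_1/\lambda}(x,\pm\lambda\xi)$.

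Next I would check that the parallel transport appearing on the right-hand side of (\ref{P_sigma_u_rescaled}) is unchanged under this rescaling. The base projection of the rescaled bicharacteristic $\tilde\beta(s) = \Phi_s(x,\mu\xi)$ is $\tilde\gamma(s) = \gamma(\mu s)$, which parametrises the same geodesic image as $\gamma$ and starts at $x$ for $s = 0$ and reaches $y$ at $s = s_1/\mu$. The substitution $u = \mu s$ applied to the parallel transport ODE (\ref{parallel_transport}) along $\tilde\gamma$ reduces it to the same ODE along $\gamma$ with the same initial condition, so $\mathbf{P}^A_{y \gets x}$ along $\tilde\gamma$ coincides with that along $\gamma$.

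The remaining task is to match the exponential factor $e^{\rho(s_1)}$. Combining $m_\mu^* H_P = \mu H_P$ with the symmetry property (\ref{omega_symmetry}), $m_\mu^*\omega = |\mu|^{1/2}\omega$, gives
\[
m_\mu^* \mathscr L_{H_P}\omega = \mathscr L_{\mu H_P}(|\mu|^{1/2}\omega) = \mu|\mu|^{1/2}(\text{div}_\omega H_P)\omega,
\]
which, compared with $m_\mu^*((\text{div}_\omega H_P)\omega) = (\text{div}_\omega H_P \circ m_\mu)|\mu|^{1/2}\omega$, yields $\text{div}_\omega H_P \circ m_\mu = \mu\,\text{div}_\omega H_P$ for all $\mu \ne 0$. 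Substituting in the definition (\ref{def_rho}) of $\rho$ for $\tilde\beta$ and changing variables gives $\tilde\rho(s) = \rho(\mu s)$, whence $\tilde\rho(s_1/\mu) = \rho(s_1)$.

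Finally, applying the already-proven identity (\ref{P_sigma_u}) to $\tilde\beta$ between parameters $0$ and $s_1/\mu$ produces
\[
e^{\rho(s_1)}(\omega^{-1}\sigma[u])(y,\mu\eta) = \mathbf{P}^A_{y\gets x}\big((\omega^{-1}\sigma[u])(x,\mu\xi)\big),
\]
and the homogeneity hypothesis (\ref{poshomog_at_x}) at $(x,\pm\xi)$ rewrites the argument on the right as $\lambda^q (\omega^{-1}\sigma[u])(x,\pm\xi)$ when $\mu = \pm \lambda$, giving exactly (\ref{P_sigma_u_rescaled}). The main obstacle is the sign bookkeeping for $\mu < 0$: this is precisely what the symmetric extension (\ref{omega_symmetry}) of $\omega$ is set up to handle, for without it the divergence identity $\text{div}_\omega H_P \circ m_\mu = \mu\,\text{div}_\omega H_P$ would fail for negative $\mu$, and the $-$ version of (\ref{P_sigma_u_rescaled}) would acquire an extra factor.
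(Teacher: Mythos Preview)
Your argument is correct and follows the same overall route as the paper: rescale the bicharacteristic to pass through $(x,\mu\xi)$, apply the already-established identity (\ref{P_sigma_u}) along it, and then verify that the exponential factor and the parallel transport are unchanged under the rescaling. The one substantive difference is in how the invariance $\tilde\rho(s_1/\mu)=\rho(s_1)$ is checked. The paper works in adapted local coordinates (\ref{K_locally}) on $N^*K$, expands $\mathscr L_{H_P}\omega$ into the Jacobian term $\partial_s|d\Phi_s/d(x',\xi_0)|^{1/2}|_{s=0}$ and the term $\partial_s\omega(\Phi_s(\cdot))|_{s=0}$, and verifies by hand that each scales linearly in the fibre variable. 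You instead obtain $\text{div}_\omega H_P\circ m_\mu=\mu\,\text{div}_\omega H_P$ in one stroke from the naturality $m_\mu^*\mathscr L_{H_P}\omega=\mathscr L_{m_\mu^*H_P}(m_\mu^*\omega)$ together with $m_\mu^*H_P=\mu H_P$ and the symmetry (\ref{omega_symmetry}). Your version is shorter and coordinate-free; the paper's version makes explicit why the symmetric extension (\ref{omega_symmetry}) of $\omega$ to negative $\lambda$ is exactly what is needed.
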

Recall that $\mathscr B = \Lambda_0 \cap \Sigma(P) = \Lambda_0 \cap \Lambda_1$.
As the symbol $\sigma[u]$ on $\Lambda_1 \setminus \Lambda_0$ is smooth up to $\mathscr B$,
equation (\ref{P_sigma_u_rescaled}) holds also when $(x,\pm \xi) \in \mathscr B$. 
Then $(y,\eta) \in N^* K \setminus 0$ implies $x < y$, that is, the causal relation $x \le y$ holds and $x \ne y$.
Writing again $\gamma(s)$ for the projection of $\beta(s) = \Phi_s(x,\xi)$ to $M$, we have
$$
\gamma(0) = x < y = \gamma(s_1).
$$ 
Hence, if $\xi$ is future pointing then $s_1 > 0$, and if $\xi$ is past pointing then $s_1 < 0$.
We emphasize that also past pointing singularities are propagated forward in time by the wave equation (\ref{eq_wave_lin}).

\begin{proof}
For $\lambda \in \R \setminus 0$, the bicharacteristic through $(x,\lambda \xi)$ is given by 
$$
\beta_\lambda(s) = (\gamma(\lambda s), \lambda \dot \gamma^*(\lambda s)),
$$
where $\gamma$ denotes still the projection of $\beta = \beta_1$ to $M$.
Analogously to (\ref{def_rho}), we write  $\rho_\lambda' = \text{div}_\omega H_P \circ \beta_\lambda$ and $\rho_\lambda(s) = \int_0^s\rho_\lambda'(s') ds'$, with the usual convention that $\int_0^s\rho_\lambda'(s') ds' = - \int_s^0 \rho_\lambda'(s') ds'$ for $s < 0$.
By applying (\ref{P_sigma_u}) to $\beta_\lambda$,
we obtain 
    \begin{align*}
e^{\rho_\lambda(s_1/\lambda)}(\omega^{-1}\sigma[u])(y,\lambda \eta) 
&= \mathbf{P}^A_{y \gets x} ( (\omega^{-1}\sigma[u])(x,\lambda\xi) )
= |\lambda|^q \mathbf{P}^A_{y \gets x} ( (\omega^{-1}\sigma[u])(x,\pm\xi) ),
    \end{align*}
where the sign is the one in $\lambda = \pm |\lambda|$.

It remains to show that $e^{\rho_\lambda(s_1/\lambda)} = e^{\rho(s_1)}$. 
We denote the restriction of the flow $\Phi_s$ on $N^* K \setminus 0$ still by $\Phi_s$.
In coordinates satisfying (\ref{K_locally}), the Lie derivative $\mathscr{L}_{H_P} \omega$ is of the form
    \begin{align*}
\mathscr L_{H_P} \omega(x',\xi_0) 
&= 
\p_s \Phi_s^* \omega(x',\xi_0)|_{s = 0}
= 
\p_s \left(\left|\frac{d\Phi_s}{d(x',\xi_0)} \right|^{1/2} \omega(\Phi_s(x',\xi_0)) \right)\bigg|_{s = 0}
\\&=
\p_s \bigg(\left|\frac{d\Phi_s}{d(x',\xi_0)} \right|^{1/2}\bigg)\bigg|_{s = 0}\, \omega(x',\xi_0) 
+ \p_s\bigg( \omega(\Phi_s(x',\xi_0))\bigg)\bigg|_{s = 0},
    \end{align*}
where we used the fact that $\Phi_0 = \id$, and therefore $|d\Phi_0 / d(x',\xi_0) |^{1/2} = 1$.
We write 
    \begin{align*}
H_P(x',\xi_0) = H^0(x',\xi_0) \p_{\xi_0} + \sum_{i=1}^n H^i(x',\xi_0) \p_{x'^i}
    \end{align*}
for the Hamiltonian vector field $H_P$ on $N^* K \setminus 0$.
Then, see e.g. p. 418 of \cite{LS}, 
    \begin{equation*}
\p_s\bigg( \left|\frac{d\Phi_s}{d(x',\xi_0)} \right|^{1/2}\bigg) \bigg|_{s=0}
= \frac 1 2 \p_s\bigg(\left|\frac{d\Phi_s}{d(x',\xi_0)} \right|\bigg) \bigg|_{s=0}
= \frac 1 2 \left(\p_{\xi_0}H^0 + \sum_{i=1}^n \p_{x'^i} H^i \right).
    \end{equation*}
Equation (\ref{def_H}), together with the fact that $\xi' = 0$ on $N^* K$, implies that 
    \begin{align*}
\p_{\xi_0}H^0 + \sum_{j=1}^n \p_{x'^i} H^i
= - (\p_{x^0} g^{0 0}) \xi_0 + \sum_{i=1}^n (\p_{x^i} g^{0 i}) \xi_0.
    \end{align*}
In particular, 
    \begin{align*}
(\p_{\xi_0}H^0)(x', \lambda \xi_0) + \sum_{j=1}^n (\p_{x'^i} H^i)(x', \lambda \xi_0) = \lambda (\p_{\xi_0}H^0)(x', \xi_0) + \lambda \sum_{j=1}^n (\p_{x'^i} H^i)(x', \xi_0).
    \end{align*}
Moreover, as $\omega$ satisfies (\ref{omega_symmetry}),
    \begin{align*}
\p_s \omega(\Phi_s(x', \lambda \xi_0))|_{s = 0}
&= \p_s \omega(\gamma(\lambda s), \lambda \dot\gamma^*(\lambda s))|_{s = 0}
= \p_s \omega(\gamma(\lambda s), \dot\gamma^*(\lambda s))|_{s = 0}
\\&= \lambda \p_s \omega(\gamma(s), \dot\gamma^*(s))|_{s = 0}
= \lambda \p_s \omega(\Phi_s(x',\xi_0))|_{s = 0}.
    \end{align*}    
The above six equations imply that
$\mathscr L_{H_P} \omega(x',\lambda\xi_0) 
= \lambda\mathscr L_{H_P} \omega(x',\xi_0)$.
This again implies that
    \begin{align*}
\text{div}_\omega H_P(x',\lambda\xi_0) = \omega^{-1}(x',\lambda\xi_0) \mathscr{L}_{H_P} \omega(x',\lambda\xi_0)
= \lambda \text{div}_\omega H_P(x',\xi_0),
    \end{align*}    
and therefore the change of variables $s' = \lambda s$ gives
    \begin{align*}
\rho_\lambda(s_1/\lambda) =
\int_0^{s_1/\lambda} \text{div}_\omega H_P(\gamma(\lambda s), \lambda \dot\gamma^*(\lambda s)) ds
= \int_0^{s_1} \text{div}_\omega H_P(\gamma(s'),  \dot\gamma^*(s')) ds' = \rho(s_1).
    \end{align*}
\end{proof}

\section{Microlocal analysis of the interaction of three waves}\label{micro_3}

The core idea of the proof of Theorem \ref{thm:main} is to choose the source $f$ in (\ref{wave_nonlin}) as the weighted superposition of three singular sources,
    \begin{align}\label{def_f}
f(\epsilon) = \sum_{\mathbf{j} = 1}^3 \epsilon_\mathbf{j} f_\mathbf{j}, \quad \epsilon = (\epsilon_1, \epsilon_2, \epsilon_3),\ \epsilon_\mathbf{j} \in \R,
    \end{align}
where $f_\mathbf{j}$, $\mathbf{j} = 1, 2, 3$, are conormal distributions supported in $\mho$ satisfying
    \begin{align}\label{support condition}
    \supp(f_{\mathbf{j}})\cap \mathscr J^+(\supp(f_{\mathbf{k}}))=\emptyset,\quad\hbox{for }\mathbf j\not =\mathbf k.
        \end{align} 
Recall that $\mho$ is the set where the measurements are gathered, see the definition (\ref{def_L}) of the source-to-solution map $L_A$. 
Each $f_\mathbf{j}$ will be similar to the source $f$ in Example \ref{ex_f_delta}.
We denote by $\phi = \phi(\epsilon)$ the solution of (\ref{wave_nonlin}) with the source $f = f(\epsilon)$,
and write 
    \begin{align}\label{phi_in_out}
\phi_{in} = \p_{\epsilon_1} \phi |_{\epsilon = 0},
\quad \phi_{out} = \p_{\epsilon_1}\p_{\epsilon_2}\p_{\epsilon_3} \phi|_{\epsilon = 0}.
    \end{align}

We will choose $f_1$ so that $\phi_{in}$ is singular along a lightlike geodesic $\gamma_{in}$ intersecting $\mho$. Then we choose $f_2$ and $f_3$ to be such perturbations of $f_1$ that $\phi_{out}$
is singular along another lightlike geodesic $\gamma_{out}$ that intersects both $\gamma_{in}$ and $\mho$. Let us fix the parametrization of $\gamma_{in}$ and $\gamma_{out}$ so that 
    \begin{align}\label{gamma_in_out}
\gamma_{in}(0) = x \in \mho,\quad \gamma_{in}(s_{in}) = \gamma_{out}(0) = y,\quad \gamma_{out}(s_{out}) = z \in \mho
    \end{align}
for some $s_{in}, s_{out} \in \R$.
Taking into account the fact that the wave equation (\ref{wave_nonlin}) propagates singularities forward in time, we consider only the case $x < y < z$, see Figure \ref{fig_diamond}, left.  

\begin{figure}
\def\svgwidth{0.4\textwidth}
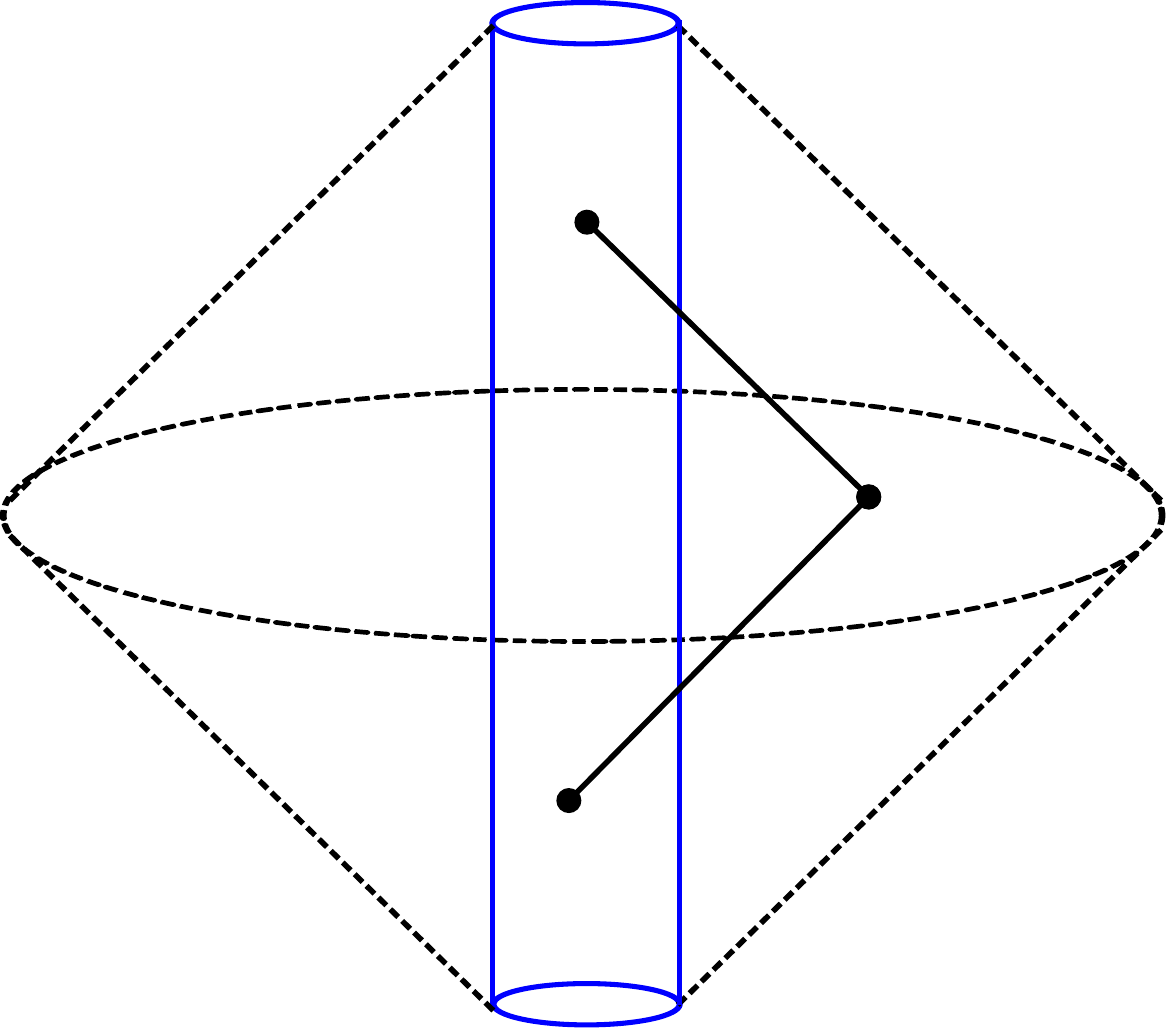
\quad
\def\svgwidth{0.4\textwidth}
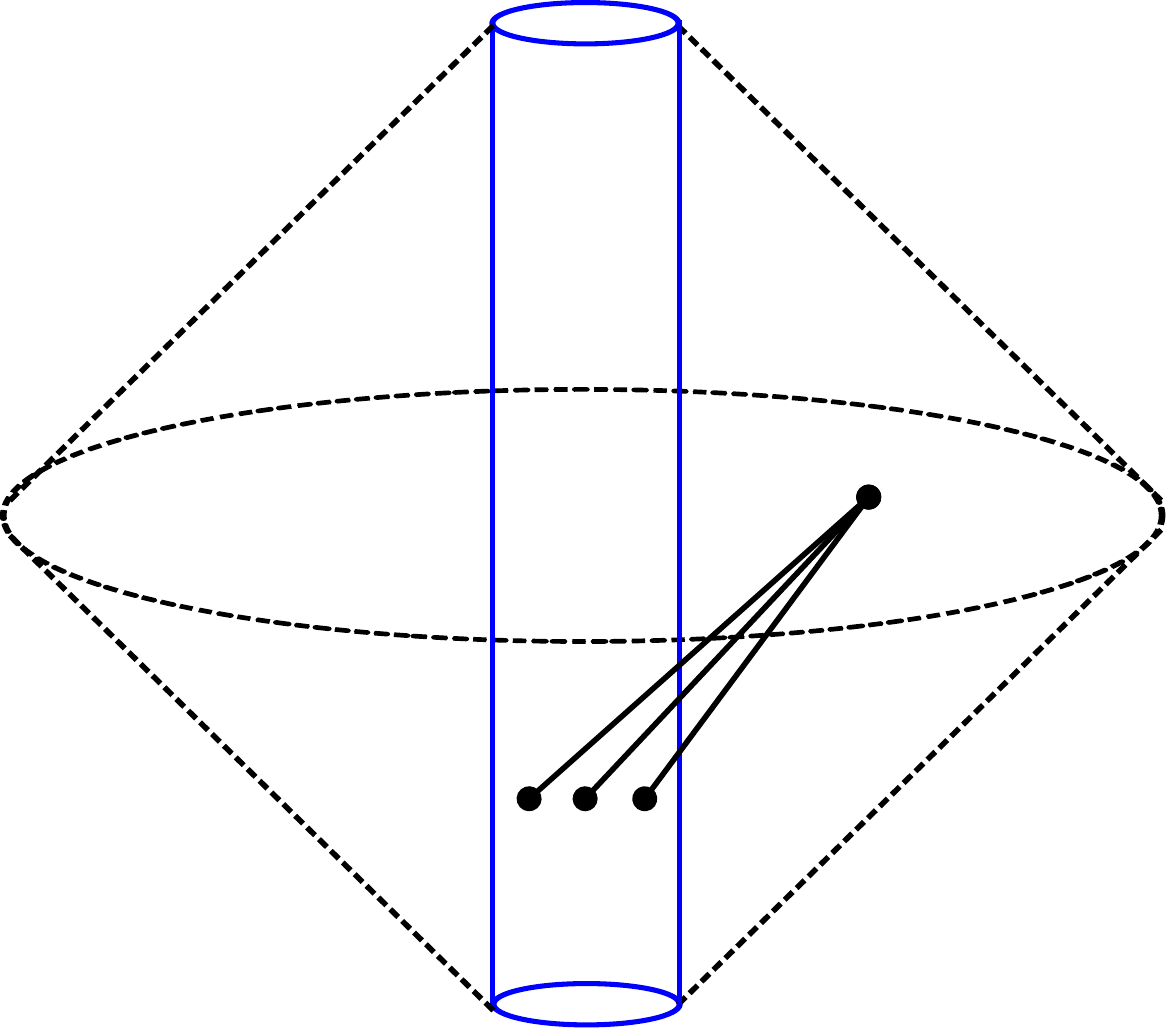
\caption{
Set $\mho$, see (\ref{def_mho}), is the blue cylinder, and set $\mathbb D$, see (\ref{def_diamond}), is the diamond like region drawn with dashed curves. 
{\em Left.} Points $x,z \in \mho$ and $y \in \mathbb D$
as in (\ref{gamma_in_out}). The geodesic segment from $x$ to $y$ is $\gamma_{in}$ and the segment from $y$ to $z$ is $\gamma_{out}$. We write also $\gamma_{in} = \gamma_{y \gets x}$ and $\gamma_{out} = \gamma_{z \gets y}$.
{\em Right.}
Points $x_\mathbf{j}$, $\mathbf{j} = 1, 2, 3$, see (\ref{def_xj}),
together with the geodesic segments $\gamma_{y \gets x_\mathbf{j}}$. Here $x_1 = x$.
}\label{fig_diamond}
\end{figure}

We will use the following shorthand notations related to (\ref{gamma_in_out}),
    \begin{align}\label{def_diamonds_etc}
\mathbb L &= \{(x,y) \in M^2 : \text{there is a lightlike geodesic joining $x$ and $y$}\},
\\\notag
\mathbb S^+(\mho) &= \{(x,y,z) \in M^3 : (x,y), (y,z) \in \mathbb L,\ x < y < z,\ x,z \in \mho,\ y \notin \mho \}.
    \end{align}
Assume from now on that $(M,g)$ is the $1+3$-dimensional Minkowski space, and denote the geodesic joining a pair $(x,y) \in \mathbb L$ by $\gamma_{y \gets x}$. What follows is independent from the parametrization of $\gamma_{y \gets x}$, but let us fix it by requiring that $\gamma_{y \gets x}(0) = x$
and $\dot \gamma_{y \gets x}(0) = (1,\theta)$ where $\theta \in S^2$.
We define also the broken non-abelian light ray transform of the connection $A$ by
    \begin{align}\label{def_S}
\mathbf{S}^A_{z \gets y \gets x} &= \mathbf{P}^A_{z \gets y} \mathbf{P}^A_{y \gets x}, \quad (x,y,z) \in \mathbb S^+(\mho).
    \end{align}

Now we can summarize the above discussion as follows: 
for each $(x,y,z) \in \mathbb S^+(\mho)$ 
we want to choose $f_\mathbf{j}$, $\mathbf{j} = 1, 2, 3$, so that $\phi_{in}$ is singular along $\gamma_{y \gets x}$ and that 
$\phi_{out}$
is singular along $\gamma_{z \gets y}$.
Moreover, we will choose $f_\mathbf{j}$, $\mathbf{j} = 1, 2, 3$, in such a way that, in a suitable microlocal sense, $\phi_{out}$ is a conormal distribution. Then we will show using Proposition~\ref{prop_P_rescaling} that $\mathbf{S}^A_{z \gets y \gets x}$ is determined by the principal symbol of $\phi_{out}|_\mho$ when $f_\mathbf{j}$, $\mathbf{j} = 1, 2, 3$, are varied. 
As each $f_\mathbf{j}$ is supported in $\mho$, 
this will show that the source-to-solution map $L_A$ determines $\mathbf{S}^A_{z \gets y \gets x}$ for all $(x,y,z) \in \mathbb S^+(\mho)$.

\subsection{The linear span of three lightlike vectors}
\label{sec_linalg_lemma}

We will begin with a lemma in linear algebra that will be a key component when proving that singularities propagating along any $\gamma_{z \gets y}$, with the pair $(y,z)$ in 
    \begin{align*}
\mathbb S^{out}(\mho) = \{(y,z) : \text{$(x,y,z) \in \mathbb S^+(\mho)$ for some $x \in \mho$}\},
    \end{align*}
can be generated by choosing suitable $f_\mathbf{j}$, $\mathbf{j} = 1, 2, 3$, supported in $\mho$.
We believe that this lemma can be also used to simplify the proofs of previous results, such as \cite{KLU1, KLU,  LUW}. These results are based on using a superposition of four singular sources, which leads to more complicated computations. 

To highlight the difference, we recall that all the previous results consider non-linear wave equations on $1+3$-dimensional Lorentzian manifolds, and use the fact that if $N^* K_\mathbf{j} \setminus 0$, $\mathbf{j}=1,2,3,4$, are flowouts analogous to $N^* K \setminus 0$ in Example \ref{ex_f_delta}, then generically $\bigcap_{\mathbf{j}=1}^4 K_\mathbf{j}$ is discrete. It follows that $N_y^* (\bigcap_{\mathbf{j}=1}^4 K_\mathbf{j}) = T_y^* M$
for a point $y \in \bigcap_{\mathbf{j}=1}^4 K_\mathbf{j}$. Then, using notation analogous with the above, it is shown that
$\phi_{out} = \p_{\epsilon_1}\p_{\epsilon_2}\p_{\epsilon_3}\p_{\epsilon_4} \phi|_{\epsilon = 0}$ can be made singular along any $\gamma_{z \gets y}$ with $(y,z) \in \mathbb S^{out}(\mho)$. As $\dot \gamma_{z \gets y}(0) \in N_y^* (\bigcap_{\mathbf{j}=1}^4 K_\mathbf{j})$ trivially, there is no geometric obstruction for $\phi_{out}$ being singular on $\dot \gamma_{z \gets y}$ in view of the product calculus for conormal distributions, see Lemma \ref{lem : product of conormal distributions} below. 

On the other hand, we will use only three flowouts, and the intersection $\bigcap_{\mathbf{j}=1}^3 K_\mathbf{j}$ will be of dimension one. Lemma \ref{Lauri's Lemma} implies, however, that for any fixed 
$y \in K_1$ and any fixed lightlike $\eta \in T_y^* M$ 
we can guarantee that $(y,\eta) \in N^* (\bigcap_{\mathbf{j}=1}^3 K_\mathbf{j})$
by choosing $K_2$ and $K_3$ carefully.

\begin{remark}\label{rem_two_not_enough}
Consider the solution $\phi(\epsilon)$ of (\ref{eq_emergent_prin})
with vanishing initial conditions and the source $f(\epsilon) = \epsilon_1 f_1 + \epsilon_2 f_2$, with $f_\mathbf{j}$ a conormal distribution supported in $\mho$ satisfying (\ref{support condition}). Regardless of the degree of non-linearity $N \ge 2$, it is not possible to make $\p_{\epsilon_1}\p_{\epsilon_2} \phi|_{\epsilon = 0}$
singular along arbitrary $\gamma_{z \gets y}$ with $(y,z) \in \mathbb S^{out}(\mho)$.
Indeed, using (\ref{span_of_two}) it can be shown that
    \begin{align*}
\singsupp(\p_{\epsilon_1}\p_{\epsilon_2} \phi|_{\epsilon = 0}) \subset \bigcup_{j=1}^2 \singsupp(\p_{\epsilon_\mathbf{j}} \phi|_{\epsilon = 0}).
    \end{align*}
This explains the lower bound $J \ge 3$ in (\ref{lin_lower_bound}).
\end{remark}

The following lemma is formulated for the Minkowski space but, being of pointwise nature, it generalizes to an arbitrary Lorentzian manifold.

\begin{lemma}\label{Lauri's Lemma}
Let $y$ be a point  in $\mathbb{R}^{1 + 3}$, and let $\xi_1, \eta \in T_y \R^{1+3} \setminus 0$ be lightlike. In any neighbourhood of $\xi_1$ in $T_y \mathbb{R}^{1 + 3}$, there exist two lightlike vectors $\xi_2, \xi_3$ such that $\eta$ is in $\linspan(\xi_1, \xi_2, \xi_3)$.  
\end{lemma}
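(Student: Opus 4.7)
The plan is to find a 3-dimensional subspace $V \subset T_y\mathbb{R}^{1+3}$ containing both $\xi_1$ and $\eta$, on which the restricted Minkowski form has Lorentzian signature $(1,2)$, and then to choose $\xi_2, \xi_3$ as nearby generators of the (two-dimensional) null cone inside $V$. The existence of such a $V$ will come from an orthogonal-decomposition argument, after which picking $\xi_2, \xi_3$ amounts to moving a tiny bit along the light cone of the resulting 2+1 Minkowski space.

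If $\eta$ is already proportional to $\xi_1$ the conclusion is immediate, so I will assume $\xi_1$ and $\eta$ are linearly independent and set $P = \linspan(\xi_1, \eta)$. The first step is to show that $P$ is non-degenerate. Parameterizing null vectors in $T_y\mathbb{R}^{1+3}$ as $(\lambda, \lambda\theta)$ with $\theta \in S^2$, a direct computation yields
\[
\pair{(\lambda_1, \lambda_1\theta_1),\,(\lambda_2, \lambda_2\theta_2)}_g \;=\; \lambda_1\lambda_2(\theta_1\cdot\theta_2 - 1),
\]
which vanishes only when $\theta_1 = \theta_2$, i.e.\ when $\xi_1$ and $\eta$ are proportional. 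Hence $\pair{\xi_1,\eta}_g \ne 0$, the Gram matrix on $P$ is invertible, and since $P$ contains two independent null vectors it has signature $(1,1)$. The orthogonal splitting $T_y\mathbb{R}^{1+3} = P \oplus P^\perp$ then forces $P^\perp$ to be positive definite (signature $(0,2)$). Picking any $w \in P^\perp \setminus 0$ and setting $V = P \oplus \linspan(w)$ produces a 3-dimensional subspace of signature $(1,2)$ containing both $\xi_1$ and $\eta$.

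The remaining step uses the fact that any signature-$(1,2)$ inner product space has a 2-dimensional null cone. In an adapted orthonormal basis of $V$ in which $\xi_1$ corresponds to $(1,1,0)$, the nearby null directions take the form $(1,\cos\phi, \sin\phi)$ for small $\phi$. Taking $\xi_2, \xi_3$ to be the null vectors at small, distinct, nonzero angles $\phi_2, \phi_3$, the determinant of the matrix with rows $\xi_1, \xi_2, \xi_3$ evaluates to $\sin(\phi_3 - \phi_2) - \sin\phi_3$, which is nonzero for generic small angles. Hence $\xi_1, \xi_2, \xi_3$ are linearly independent and span $V$, which contains $\eta$, with $\xi_2, \xi_3$ as close to $\xi_1$ as one pleases. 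The only slightly delicate point is the Gram matrix computation showing that two independent null vectors span a non-degenerate plane; the rest is elementary linear algebra inside a fixed 3-dimensional Lorentzian vector space.
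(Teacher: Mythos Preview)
Your argument is correct and follows the same geometric idea as the paper's: find a $3$-dimensional subspace $V$ of signature $(1,2)$ containing both $\xi_1$ and $\eta$, then take $\xi_2,\xi_3$ as small perturbations of $\xi_1$ along the null cone of $V$. The paper does this by explicit coordinate rotations (normalizing to $\xi_1=(1,1,0,0)$, $\eta=(1,\pm\sqrt{1-r_0^2},r_0,0)$, and choosing $\xi_{2,3}=(1,\sqrt{1-r^2},\pm r,0)$), while you build $V$ abstractly via the Minkowski-orthogonal decomposition $P\oplus P^\perp$; your key observation that two independent null vectors are never Minkowski-orthogonal is exactly what makes the paper's choice of a common Euclidean normal $\nu$ to $\xi_1',\eta'$ work. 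Your route is cleaner conceptually; the paper's concrete choice, however, yields the explicit coefficients in the expansion of $\eta$ in terms of $\xi_1,\xi_2,\xi_3$ that are needed in the subsequent symbol computations. Two inconsequential slips: in an orthonormal basis of $V$ the vector $\xi_1$ has coordinates $(a,a,0)$ for some $a\neq 0$ rather than $(1,1,0)$ (just rescale $\xi_{2,3}$ by $a$), and your determinant should read $\sin(\phi_3-\phi_2)-\sin\phi_3+\sin\phi_2$; taking $\phi_3=-\phi_2=\phi$ still gives $2\sin\phi(1-\cos\phi)\neq 0$ for small $\phi\neq 0$.
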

\begin{proof}
The statement is invariant with respect to non-vanishing rescaling of $\xi_1$ and $\eta$, and we assume without loss of generality that $\xi_1 = (1,\xi_1')$ and $\eta = (1,\eta')$.
Let $\nu \in S^2$ be such that $\xi_1', \eta' \perp \nu$. After a rotation in $\mathbb{R}^3$, we may assume that $\nu = (0, 0, 1)$. Then $\xi_1' = (\theta_1, 0)$ and $\eta' = (\theta_0, 0)$ with $\theta_1, \theta_0 \in S^1$. After a rotation in $\mathbb{R}^2$, we may assume that $\theta_1 = (1,0)$. We write $a(r) = \sqrt{1-r^2}$ for $r \in [-1,1]$. Then there is $r_0 \in [-1,1]$, and a choice of sign, such that $\theta_0 = (\pm a(r_0), r_0)$.
To summarize, we may assume without loss of generality that 
    \begin{align}\label{xi_eta_form}
\xi_1 = (1,1,0,0), \quad \eta = (1,\pm a(r_0), r_0, 0).
    \end{align}

It is enough to show that in any neighbourhood $U_0 \subset S^1$ of $\theta_1$ there are $\theta_2, \theta_3 \in U_0$ such that
$(1,\theta_\mathbf{j})$, $\mathbf{j}=1,2,3$, span $\mathbb{R}^3$.
We set $\theta_2 = (a(r),r)$ and $\theta_3 = (a(r),-r)$
where $r > 0$ is chosen to be small enough so that
$\theta_\mathbf{j} \in U_0$, $\mathbf{j}=2,3$.
Now $(1,\theta_\mathbf{j})$, $\mathbf{j}=1,2,3$, are linearly independent since
$$
\begin{vmatrix}
1 & 1 & 1
\\
1 & a(r) & a(r)
\\
0 & r & -r
\end{vmatrix}
= 2 r (1-a(r)) \ne 0.
$$
To summarize, we may choose
    \begin{align}\label{xi23}
\xi_2 = (1,a(r),r,0), \quad \xi_3 = (1,a(r),-r,0).
    \end{align}
\end{proof}

We will need also an explicit version of the above lemma. 
As in the proof, we assume without loss of generality that (\ref{xi_eta_form}) holds, and choose $\xi_2$ and $\xi_3$ according to (\ref{xi23}). Then
    \begin{align}\label{eta_lin_comb}
r^{2}\eta = (-2 b + \mathcal O(r)) \xi_1 + (b + \mathcal O(r)) \xi_2
+ (b + \mathcal O(r)) \xi_3,
    \end{align}
where $b = 1 \mp a(r_0) = 1 \mp \sqrt{1-r_0^2}$.

\begin{remark}\label{rem_signs_mho}
In the context of (\ref{gamma_in_out}), we will take $\xi_1 = \dot{\gamma}_{in}(s_{in})$ and $\eta = \dot{\gamma}_{out}(0)$.
Assuming that $y \notin \mho$,
the condition $\gamma_{out}(s_{out}) \in \mho$ implies that the second component of $\eta$ in 
(\ref{xi_eta_form}), that is, $\pm a(r_0)$, must have negative sign. 
Then $b \ge 1$, and in particular, $\eta$ is not in $\linspan(\xi_j, \xi_k)$ for any $j,k \in \{1,2,3\}$.
This fact will be important in what follows, and the set $\mho$ was chosen to be of the particular form (\ref{def_mho}) to avoid technicalities related, for example, to geodesics that return to $\mho$ after exiting it. 
\end{remark}

\subsection{Three-fold linearization of cubic connection wave equations}

Let us consider a source $f(\epsilon)$ of the form (\ref{def_f}),
and the half-density $u(\epsilon) = |g|^{1/4} \phi(\epsilon)$
where 
$\phi(\epsilon)$ denotes again the solution of (\ref{wave_nonlin}) with the source $f=f(\epsilon)$.
Recalling the definition of $Q$, see (\ref{def_Q}), we
observe that $u=u(\epsilon)$ satisfies the wave equation 
    \begin{align}\label{eqn : cubic wave eqn}
Q(u) = |g|^{1/4} f.
    \end{align}

To simplify the notation, we assume that $\kappa = -1$ in (\ref{eq:model}). The general case is analogous. 
We introduce 
$$
u_{\mathbf{j_1}, \dots, \mathbf{j_l}} = \partial_{\epsilon_{\mathbf{j_1}}} \cdots \partial_{\epsilon_{\mathbf{j_l}}} u, \quad v_{\mathbf{j_1}, \cdots, \mathbf{j_l}} = u_{\mathbf{j_1}, \cdots, \mathbf{j_l}}|_{\epsilon = 0},
$$ 
and write $\Re$ for the real part of a complex number. 
With these notations, we differentiate \eqref{eqn : cubic wave eqn}, and obtain
\begin{equation*}
P u_\mathbf{j} - \Re(\langle u_\mathbf{j}, u\rangle) u - \frac{1}{2} |u|^2 u_\mathbf{j} = |g|^{1/4} f_\mathbf{j}.
\end{equation*} 
Together with the fact that $u|_{\epsilon = 0} = 0$, this implies $P v_\mathbf{j} = |g|^{1/4} f_\mathbf{j}$. Differentiating the equation \eqref{eqn : cubic wave eqn} twice yields \begin{equation}\label{two-fold_lin}
P u_{\mathbf{j}\mathbf{k}} - \Re(\langle u_{\mathbf{j}\mathbf{k}}, u\rangle + \langle u_{\mathbf{j}}, u_\mathbf{k}\rangle) u - \Re(\langle u_{\mathbf{j}}, u\rangle) u_\mathbf{k} - \Re(\langle u_{\mathbf{k}}, u\rangle) u_\mathbf{j} - \frac{1}{2} |u|^2 u_{\mathbf{j}\mathbf{k}} = 0,
\end{equation} 
and $v_{\mathbf{j}\mathbf{k}} = 0$.
Finally, one more differentiation gives the desired linear equation, that we call the three-fold linearization,
\begin{equation}\label{eqn : linearized wave equation }P v_{123} = \Re(\langle v_1, v_2\rangle) v_3 + \Re(\langle v_1, v_3\rangle) v_2 + \Re(\langle v_2, v_3\rangle) v_1.\end{equation}
The right-hand side of \eqref{eqn : linearized wave equation } is the sum of products of $v_1, v_2, v_3$, and we call it the three-fold interaction of these three solutions to the linear wave equation (\ref{eq_wave_lin}).

\begin{remark}\label{rem_lin_N}
Consider the solution $\phi(\epsilon)$ of (\ref{eq_emergent_prin})
with vanishing initial conditions and the source $f(\epsilon) = \sum_{j=1}^N \epsilon_\mathbf{j} f_\mathbf{j}$, with $f_\mathbf{j}$ a distribution supported in $\mho$. 
Analogously to (\ref{two-fold_lin}), we see that $\p_{\epsilon}^\alpha \phi|_{\epsilon = 0} = 0$
for any multi-index $\alpha \in \N^{1+3}$
satisfying $|\alpha| < N$ where $N$ is the degree of non-linearity in the equation (\ref{eq_emergent_prin}). 
This explains the lower bound $J \ge N$ in (\ref{lin_lower_bound}).
\end{remark}

\subsection{Wave interactions as products of conormal distributions}

Consider a triple $(x,y,z) \in \mathbb S^+(\mho)$ and let us construct $f_\mathbf{j}$, $\mathbf{j} = 1,2,3$, such that $\phi_{out}$, defined by (\ref{phi_in_out}), 
is singular on $\gamma_{z \gets y}$.
We write 
    \begin{align}\label{def_eta_xi}
\eta = \dot \gamma_{z \gets y}^*(0), \quad
\xi_1 = \dot \gamma_{y \gets x}^*(s_{in}),
    \end{align}
where $s_{in} > 0$ satisfies $\gamma_{y \gets x}(s_{in}) = y$. 

The geodesic $\gamma$ with the initial condition $(x, \xi)$
is denoted by $\gamma(\cdot;x,\xi)$.
Let $\mathcal V \subset T_y^* M$ be a small enough neighbourhood of $\xi_1$ so that for all $\xi \in \mathcal V$ it holds that $\gamma(-s_{in};y,\xi) \in \mho$ and that $\xi$ is future pointing. Let $\xi_2, \xi_3 \in \mathcal V$ be two lightlike vectors as in Lemma \ref{Lauri's Lemma}
and write 
    \begin{align}\label{def_xj}
x_\mathbf{j} = \gamma(-s_{in};y,\xi_\mathbf{j}), \quad \mathbf{j}=1,2,3.
    \end{align}
With a slight abuse of notation, we write also 
$\xi_\mathbf{j} = \dot \gamma(-s_{in};y,\xi_\mathbf{j})$, see Figure \ref{fig_diamond}, right, for the geometric setup. 

Analogously to Example \ref{ex_f_delta}, we let 
$c_\mathbf{j} \in E_{x_\mathbf{j}} \setminus 0$, and let $\chi_\mathbf{j}$ be a pseudodifferential operator 
with the following properties:
\begin{itemize}
\item[($\chi$1)] $\sigma[\chi_\mathbf{j}]$ is positively homogeneous of degree $q$, symmetric with respect to $0 \in T^* M$, and real valued,
\item[($\chi$2)] $\sigma[\chi_\mathbf{j}] \ne 0$ near $(x_\mathbf{j}, \xi_\mathbf{j})$,
\item[($\chi$3)] $\WF(\chi_\mathbf{j})$ is contained in a small neighbourhood of $\ccl(x_\mathbf{j}, \pm \xi_\mathbf{j})$.
\end{itemize}

The degree $q$ of $\chi_{\mathbf{j}}$ is chosen to be small enough so that $f_\mathbf{j} \in C^4(M)$
where 
    \begin{align}\label{def_fj}
f_\mathbf{j} \in I(M; N^* \{x_\mathbf{j}\} \setminus 0; E), \quad f_\mathbf{j} = c_\mathbf{j} \chi_\mathbf{j} \delta_{x_\mathbf{j}}.
    \end{align}
Here $\delta_{x_\mathbf{j}}$ is the Dirac delta distribution at $x_\mathbf{j}$.
Moreover, we choose $\chi_{\mathbf{j}}$ so that $\supp(\chi_{\mathbf{j}}\delta_{\mathbf{j}}) \subset \mho$ and that the support condition (\ref{support condition}) is satisfied.
Recalling that $\mathcal C \subset C_0^4(\mho; E)$ is the domain of the source-to-solution map $L_A$, see (\ref{def_L}),
we have then that the linear combination $\epsilon_1 f_1 + \epsilon_2 f_2  + \epsilon_3 f_3$ is in $\mathcal C$ for small enough $\epsilon_\mathbf{j} \ge 0$.

Recall that $v_\mathbf{j}$ denotes the solution of 
    \begin{align}\label{eq_for_vj}
P v_\mathbf{j} = |g|^{1/4} f_\mathbf{j}, \quad 
v_\mathbf{j}|_{t < 0} = 0.
    \end{align}
Moreover, we denote the restriction of $v_\mathbf{j}$ on $\R^{1+3} \setminus \{x_\mathbf{j}\}$ still by $v_\mathbf{j}$, and write $K_\mathbf{j} = x_\mathbf{j} + K$ where $K$ is as in (\ref{K_delta}). That is, writing $x_\mathbf{j} = (t_\mathbf{j},x_\mathbf{j}')$,
    \begin{align}\label{def_Kj}
K_\mathbf{j} = \{(t_\mathbf{j} + s, x_\mathbf{j}' + s \theta) \in \R^{1+3}: s > 0,\ \theta \in S^2\}.
    \end{align}
Then $v_\mathbf{j} \in I(\R^{1+3} \setminus x_\mathbf{j}; N^* K_\mathbf{j} \setminus 0; E \otimes \Omega^{1/2})$. 
As in Example \ref{ex_f_delta}, property ($\chi$3) implies that $\singsupp(v_\mathbf{j})$ is contained in a small neighbourhood $\mathscr S_\mathbf{j} \subset \overline{K_\mathbf{j}}$ of $\gamma_{y \gets x_\mathbf{j}}(\R_+)$.

Clearly $y \in \bigcap_{\mathbf{j} = 1}^3 \gamma_{y \gets x_\mathbf{j}}(\R_+) \subset \bigcap_{\mathbf{j} = 1}^3 K_\mathbf{j}$.
It follows from Remark \ref{rem_signs_mho} that the covectors $\xi_{\mathbf{j}}$, $\mathbf{j} = 1,2,3$, are linearly independent. This again implies that 
    \begin{align*}
N_y^* (K_1 \cap K_2 \cap K_3)
= \linspan(\xi_1, \xi_2, \xi_3).
    \end{align*}
When $\WF(\chi_\mathbf{j})$, $\mathbf{j} = 1,2,3$, are  small enough, this also guarantees for distinct $\mathbf{j}$, $\mathbf{k}$, $\mathbf{l}$ that $K_\mathbf{j}$ and $K_\mathbf{k}$, as well as $K_\mathbf{j}$ and $K_\mathbf{k} \cap K_\mathbf{l}$, are transversal in $\mathscr S := \mathscr S_1 \cup \mathscr S_2 \cup \mathscr S_3$.

Let us now consider products of the form $\Re(\langle v_\mathbf{j}, v_\mathbf{k} \rangle) v_\mathbf{l}$, with distinct $\mathbf{j}, \mathbf{k}, \mathbf{l}$, that appear on the right-hand side of (\ref{eqn : linearized wave equation }). As $s_{in} > 0$, it holds that $y \ne x_\mathbf{j}$ for each $\mathbf{j} = 1,2,3$, and $v_\mathbf{j}$ are conormal distributions near $y$. Thus their products can be analysed using the product calculus for conormal distributions. We recall this calculus in the next lemma, that is a variant of \cite[Lemma 1.1]{Greenleaf-Uhlmann-CMP-1993}. With obvious modifications it holds also when the conormal distributions $u_\mathbf{j}$, $\mathbf{j} = 1,2$, take values on $E \otimes \Omega^{1/2}$ and the product is defined in terms of $\pair{\cdot, \cdot}_E$, and also when $u_1$ takes values on $\Omega^{1/2}$ and $u_2$ on $E \otimes \Omega^{1/2}$ and the product is defined in terms of the scalar-vector product on $E$. 

\begin{lemma}\label{lem : product of conormal distributions}
Let $K_\mathbf{j} \subset M$, $\mathbf{j} = 1, 2$, be transversal submanifolds, and let $u_\mathbf{j}$ be a conormal distribution in $I(M; N^\ast K_\mathbf{j} \setminus 0; \Omega^{1/2})$. For a fixed nowhere vanishing half-density $\mu \in C^\infty(M; \Omega^{1/2})$, we define the product of $u_1$ and $u_2$ by
$$
u_1 u_2 = \mu ((\mu^{-1} u_1)(\mu^{-1} u_2)).
$$   
Let $\chi$ be a pseudodifferential operator with $\WF(\chi)$ disjoint from both $N^\ast K_\mathbf{j}$, $\mathbf{j} = 1,2$. 
It follows that 
$$
\chi (u_1 u_2) \in I(M; N^\ast (K_1 \cap K_2) \setminus 0; \Omega^{1/2}),
$$ 
with the principal symbol, ignoring the $2 \pi$ and $\imath$ factors, 
$$
\sigma[\chi(u_1 u_2)](x,\xi) = \mu^{-1}(x) \sigma[\chi](x,\xi) \sigma[u_1](x,\xi^{(1)})\sigma[u_2](x,\xi^{(2)}).
$$ 
where $\xi = \xi^{(1)} + \xi^{(2)}$ with $\xi^{(\mathbf{j})} \in N_x^\ast K_\mathbf{j} \setminus 0$ and $x \in K_1 \cap K_2$.
\end{lemma}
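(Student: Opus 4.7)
The plan is to reduce to a local coordinate model adapted to the transversal intersection, write each $u_\mathbf{j}$ as an oscillatory integral, multiply, and then use the hypothesis on $\chi$ to cut off the regions in phase space where the resulting amplitude fails to be a symbol. After dividing out $\mu$, it suffices to prove a scalar version of the statement for the densities $\mu^{-1} u_\mathbf{j}$; the factor $\mu^{-1}(x)$ in the claimed symbol formula then reflects the prefactor $\mu$ appearing in the very definition of the product. By transversality, we may choose local coordinates $x = (x', x'', x''')$ near a point of $K_1 \cap K_2$, with $x' \in \R^p$ and $x'' \in \R^q$, such that $K_1 = \{x' = 0\}$, $K_2 = \{x'' = 0\}$, and therefore $K_1 \cap K_2 = \{x' = 0,\ x'' = 0\}$. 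In these coordinates, each $\mu^{-1} u_\mathbf{j}$ admits a standard oscillatory representation
\[
\mu^{-1} u_1(x) = \int e^{\imath x' \cdot \xi'} a_1(x, \xi')\, d\xi', \qquad \mu^{-1} u_2(x) = \int e^{\imath x'' \cdot \xi''} a_2(x, \xi'')\, d\xi'',
\]
with $a_\mathbf{j}$ symbols whose principal parts represent $\mu^{-1} \sigma[u_\mathbf{j}]$ on the respective conormal fibres.

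The pointwise product then has the formal oscillatory representation
\[
\mu^{-1} u_1(x)\, \mu^{-1} u_2(x) = \iint e^{\imath(x' \cdot \xi' + x'' \cdot \xi'')}\, a_1(x, \xi') a_2(x, \xi'')\, d\xi' d\xi''.
\]
The phase $\varphi(x, \xi) = x' \cdot \xi' + x'' \cdot \xi''$ is non-degenerate and parametrizes $N^*(K_1 \cap K_2) \setminus 0$, but the joint amplitude $a_1 a_2$ is a genuine symbol in the full variable $\xi = (\xi', \xi'') \in \R^{p+q}$ only in conic regions bounded away from the two coordinate planes $\{\xi' = 0\}$ and $\{\xi'' = 0\}$. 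The standard wavefront set calculus for products of distributions, or a direct estimate partitioning $\R^{p+q}_\xi$ into three conic pieces (one near each coordinate plane and one bounded away from both), yields the inclusion $\WF(u_1 u_2) \subset N^* K_1 \cup N^* K_2 \cup (N^*(K_1 \cap K_2) \setminus 0)$.

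Because $\WF(\chi)$ is disjoint from $N^* K_1 \cup N^* K_2$, we may choose a positively homogeneous cutoff $\psi(\xi)$ equal to one on a conic neighbourhood of $\WF(\chi)$ and vanishing near $\{\xi' = 0\} \cup \{\xi'' = 0\}$; on the support of $\psi$ the amplitude $\psi\, a_1 a_2$ becomes a bona fide symbol on all of $\R^{p+q}_\xi$, and $\chi(u_1 u_2)$ coincides, modulo a smooth function, with
\[
\chi\, \mu \iint e^{\imath(x' \cdot \xi' + x'' \cdot \xi'')}\, \psi(\xi) a_1(x, \xi') a_2(x, \xi'')\, d\xi' d\xi''.
\]
The latter belongs to $I(M; N^*(K_1 \cap K_2) \setminus 0; \Omega^{1/2})$, and its principal symbol can be read off from the amplitude: at $(x, \xi) \in N^*_x(K_1 \cap K_2) \setminus 0$ with $\xi = \xi^{(1)} + \xi^{(2)}$, the representative is $\sigma[\chi](x,\xi)\, a_1(x, \xi^{(1)})\, a_2(x, \xi^{(2)})$, which, after reinstating the $\mu$ factor and identifying $a_\mathbf{j}$ with $\mu^{-1} \sigma[u_\mathbf{j}]$ on $N^*K_\mathbf{j} \setminus 0$, is exactly the claimed formula.

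The main technical obstacle is the second step: justifying that the ``cross'' conic regions of $\R^{p+q}_\xi$ near $\{\xi^{(1)} = 0\}$ or $\{\xi^{(2)} = 0\}$ contribute nothing to $\chi(u_1 u_2)$ modulo smoothing. This depends quantitatively on the uniform separation of $\WF(\chi)$ from the individual conormal bundles $N^* K_\mathbf{j}$ and is precisely where the hypothesis on $\chi$ enters. Once this reduction is secured, the identification of $\chi(u_1 u_2)$ as a conormal distribution on $K_1 \cap K_2$ with the stated principal symbol is a routine application of the oscillatory integral calculus.
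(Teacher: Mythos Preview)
The paper does not actually prove this lemma; it is stated as a variant of \cite[Lemma 1.1]{Greenleaf-Uhlmann-CMP-1993} and simply recalled for later use. Your argument is essentially the standard one underlying that reference: pass to adapted local coordinates where $K_1=\{x'=0\}$ and $K_2=\{x''=0\}$, multiply the oscillatory integrals, and use the pseudodifferential cutoff $\chi$ to excise the conic neighbourhoods of $\{\xi'=0\}$ and $\{\xi''=0\}$ where the product amplitude $a_1(x,\xi')a_2(x,\xi'')$ fails to be a joint symbol. The bookkeeping with the half-density $\mu$ is handled correctly, and the symbol formula drops out as stated. So your proposal is correct and aligned with the argument the paper is implicitly invoking.
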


Let us remark that, due to the transversality of $K_1$ and $K_2$, it holds that 
$$
N_x^* (K_1 \cap K_2) = N_x^* K_1 \oplus N_x^* K_2,
\quad x \in K_1 \cap K_2.
$$ 
Therefore the decomposition $\xi = \xi^{(1)} + \xi^{(2)}$, $\xi^{(\mathbf{j})} \in N_x^\ast K_\mathbf{j}$, is unique for any covector $\xi \in N_x^* (K_1 \cap K_2)$. The condition that both $\xi^{(1)}$ and $\xi^{(2)}$ are non-zero is equivalent with
    \begin{align*}
(x,\xi) \in 
N^\ast (K_1 \cap K_2) \setminus (N^\ast K_1 \cup N^\ast K_2).
    \end{align*}

We return to our study of $v_\mathbf{j}$, $\mathbf{j} = 1,2,3$.
Due to Remark \ref{rem_signs_mho}, we can choose a pseudodifferential operator $\chi$ such that $\chi = 1$ near $\ccl(y,\eta)$, and that $\WF(\chi)$ is contained in a small conical neighbourhood of $\ccl(y,\eta)$.
Then applying Lemma \ref{lem : product of conormal distributions} twice, we obtain
    \begin{align}\label{conormal_prod}
\chi(\Re(\langle v_\mathbf{j}, v_\mathbf{k} \rangle) v_\mathbf{l}) \in I(\R^{1+3}; \Lambda_0; E \otimes \Omega^{1/2}),
    \end{align}
where $\Lambda_0 = N^\ast (K_1 \cap K_2 \cap K_3) \setminus 0$ and $\mathbf{j}, \mathbf{k}, \mathbf{l} \in \{1, 2, 3\}$ are distinct. 
For the convenience of the reader, we give a detailed proof of this. 
\begin{proof}[Proof of (\ref{conormal_prod})]
To simplify the notation, we will consider the product
$\Re(\langle v_1, v_2 \rangle) v_3$, the other cases being analogous. Write
     \begin{align}\label{eta_decomp}
\eta = \eta^{(1)} + \eta^{(2)} + \eta^{(3)}, \quad
\eta^{(\mathbf{j})} \in N_y^*K_\mathbf{j} =  \linspan(\xi_\mathbf{j}),
    \end{align}
and let $\chi_0$ and $\chi_1$ be pseudodiffential operators such that 
    \begin{align*}
\WF(\chi_0) \cap (N^* K_1 \cup N^* K_2) = \emptyset,
\quad 
\WF(\chi_1) \cap (N^* (K_1 \cap K_2) \cup N^* K_3) = \emptyset,
    \end{align*}
$\chi_0 = 1$ near $\ccl(y,\eta^{(1)} + \eta^{(2)})$
and $\chi_1 = 1$ near $\ccl(y,\eta)$.
Recall that $\eta$ is not in $\linspan(\xi_j, \xi_k)$ for any $j,k \in \{1,2,3\}$, see Remark \ref{rem_signs_mho}, and this guarantees that $\chi_0$ and $\chi_1$ with the above properties exist. Furthermore, as $s_{in} > 0$, we can choose $\chi_0$ so that $\WF(\chi_0)$ is also disjoint from $\{x_1, x_2, x_3\}$.
Now Lemma \ref{lem : product of conormal distributions} implies that 
    \begin{align}\notag
\chi_0(\Re\langle v_1, v_2 \rangle) &\in I(\R^{1+3}; N^\ast (K_1 \cap K_2) \setminus 0; \Omega^{1/2}),
\\\label{double_cutoff_lem2}
\chi_1(\chi_0(\Re\langle v_1, v_2 \rangle) v_3) &\in I(\R^{1+3}; \Lambda_0; E \otimes \Omega^{1/2}).
    \end{align}

We write $w = (1-\chi_0)(\Re\langle v_1, v_2 \rangle) v_3$ and will show that $(y,\eta) \notin \WF(w)$. As $\WF(w)$ is conical and closed, it then follows that $\WF(\chi_1) \cap \WF(w) = \emptyset$
whenever $\WF(\chi_1)$ is contained in a small enough conical neighbourhood of $\ccl(y,\eta)$. In this case $\chi_1 w$ is smooth, and (\ref{double_cutoff_lem2}), together with
    \begin{align*}
\chi_1(\Re(\langle v_1, v_2 \rangle) v_3)
= \chi_1(\chi_0(\Re\langle v_1, v_2 \rangle) v_3) + \chi_1 w,
    \end{align*}
implies that $\chi_1(\Re(\langle v_1, v_2 \rangle) v_3) \in I(\R^{1+3}; \Lambda_0; E \otimes \Omega^{1/2})$.

To simplify the notation, we write $\WF_y(w) = \WF(w) \cap T_y^* \R^{1+3}$.
It remains to show that $\eta \notin \WF_y(w)$.
We have $\WF_y(v_{\mathbf{j}}) \subset N_y^*K_\mathbf{j} =  \linspan(\xi_\mathbf{j})$. As the vectors $\xi_1$ and $\xi_2$ are linearly independent, \cite[Th. 8.2.10]{Hormander1} implies that 
    \begin{align*}
\WF_y(\pair{v_1,v_2}) \subset N^*_y (K_1 \cap K_2)
= \linspan(\xi_1,\xi_2).
    \end{align*}
Moreover, as $\chi_0 = 1$ near $\ccl(y,\eta^{(1)} + \eta^{(2)})$, 
    \begin{align*}
\WF_y((1-\chi_0)(\Re\langle v_1, v_2 \rangle)) \subset \linspan(\xi_1,\xi_2) \setminus \linspan(\eta^{(1)} + \eta^{(2)}).
    \end{align*}
Using again \cite[Th. 8.2.10]{Hormander1}, we have
    \begin{align*}
\WF_y(w) \subset \{ \zeta^1 + \zeta^2 : \zeta^1 \in (\linspan(\xi_1,\xi_2) \setminus \linspan(\eta^{(1)} + \eta^{(2)})) \cup \{0\},\ \zeta^2 \in \linspan(\xi_3) \}.
    \end{align*}
As the direct sum decomposition (\ref{eta_decomp}) is unique, and as $\eta^{(1)} + \eta^{(2)} \ne 0$ by Remark \ref{rem_signs_mho}, it holds that $\eta \notin \WF_y(w)$.
\end{proof}

Let us denote the right-hand side of (\ref{eqn : linearized wave equation }) by $f_{out}$, that is,
    \begin{align*}
f_{out} = \Re(\langle v_1, v_2\rangle) v_3 + \Re(\langle v_1, v_3\rangle) v_2 + \Re(\langle v_2, v_3\rangle) v_1.
    \end{align*}
Furthermore, we write $v_{out}$ and $w$ for the solutions of the following two wave equations
    \begin{align}\label{eq_wave_out}
\left\{ \begin{array}{l}
Pv_{out} = \chi f_{out},\\ v_{out}|_{t < 0} = 0,
\end{array} \right. \qquad \left\{ \begin{array}{l}
Pw = (1 - \chi) f_{out},\\ w|_{t < 0} = 0,
\end{array} \right.
    \end{align}
where $\chi$ is as in (\ref{conormal_prod}).
Then the solution $v_{123}$ of (\ref{eqn : linearized wave equation }) satisfies $v_{123} = v_{out} + w$,
and due to (\ref{conormal_prod}), it holds that $\chi f_{out} \in  I(\R^{1+3}; \Lambda_0; E \otimes \Omega^{1/2})$. We will treat $w$ as a remainder term.

Recall that $y = \gamma_{z \gets y}(0)$ and $\eta = \dot \gamma_{z \gets y}^*(0)$. 
As $\chi = 1$ near $\ccl(y,\eta)$, it holds that 
$$
(y,\eta) \notin \WF((1-\chi) f_{out}).
$$
As $\singsupp(f_{out}) \subset \mathscr S$ and as $\mathscr S$ is a small neighbourhood of $\bigcup_{\mathbf{j}=1}^3 \gamma_{y \gets x_\mathbf{j}}(\R_+)$
in $\bigcup_{\mathbf{j}=1}^3 \overline{K_\mathbf{j}}$,
we see that the bicharacteristic 
    \begin{align}\label{def_beta_out}
\beta_{out}(s) = (\gamma_{z \gets y}(s), \gamma_{z \gets y}^*(s))
    \end{align}
does not intersect $\WF((1-\chi) f_{out})$.
Writing 
    \begin{align*}
(z,\zeta) = \beta_{out}(s_{out})
    \end{align*}
for some $s_{out} > 0$ and $\zeta \in T_z^* M \setminus 0$, it follows from \cite[Th. 23.2.9]{Hormander-Vol3} that $(z,\zeta) \notin \WF(w)$. 

Let us now consider the future flowout $\Lambda_1$ from 
$\Lambda_0 \cap \Sigma(P)$, with $\Lambda_0$ as in (\ref{conormal_prod}). We will show in Appendix \ref{appendix3} that $\Lambda_1 \setminus \Lambda_0$ coincides with a conormal bundle, and thus we can apply Theorem \ref{th_parametrix} to the equation for $v_{out}$ in (\ref{eq_wave_out}). 
Let us point out that this study of the structure of $\Lambda_1$ is not essential for the proof. We could alternatively treat $v_{out}$ as a Lagrangian distribution, but then its symbol should be viewed as a section of $E \otimes \Omega^{1/2} \otimes \mathscr{M}$, with $\mathscr{M}$ the Maslov bundle over $\Lambda_1$. However, we prefer to avoid technicalities related to general Lagrangian distributions in the present paper. 

Theorem \ref{th_parametrix} implies that $v_{out} \in I(M; \Lambda_0, \Lambda_1; E\otimes \Omega^{1/2} )$,
and as $(z,\zeta) \notin \WF(w)$, we may choose a pseudodifferential operator $\tilde \chi$ acting on $\mathcal D'(\mho)$
such that $\sigma[\tilde \chi] = 1$ near $(z,\zeta)$
and that 
 $\tilde \chi (v_{123}|_\mho) \in I(\mho; \Lambda_1; E\otimes \Omega^{1/2} )$. Then $\sigma[\tilde \chi (v_{123}|_\mho)](z,\zeta) =\sigma[v_{out}](z,\zeta)$
 and in terms of the source-to-solution map this reads as
     \begin{align}\label{recovery_of_prinsymb}
\sigma[\tilde \chi |g|^{1/4}\p_{\epsilon_1}\p_{\epsilon_2}\p_{\epsilon_3} L_A ( \epsilon_1 f_1 + \epsilon_2 f_2 + \epsilon_3 f_3 )|_{\epsilon = 0} ](z,\zeta) 
=\sigma[v_{out}](z,\zeta).
    \end{align}
This shows that $L_A$ determines $\sigma[v_{out}](z,\zeta)$.
We will recover $\mathbf{S}^A_{z \gets y \gets x}$ by considering $\sigma[v_{out}]$ at $(z,\zeta) \in \Lambda_1$.

\subsection{From the source-to-solution map to the broken light ray transform}

Having understood the propagation and interaction of the linear waves, we now prove that the source-to-solution map determines  the broken non-abelian light ray transform.

\begin{theorem}\label{th_from_L_to_S}
Let $A$ and $B$ be two connections in $\R^{1+3}$ such that $L_{A}=L_{B}$, as in Theorem \ref{thm:main}.
Then $\mathbf{S}^A_{z \gets y \gets x} = \mathbf{S}^B_{z \gets y \gets x}$
for all $(x,y,z) \in \mathbb S^+(\mho)$.
\end{theorem}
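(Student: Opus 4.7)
The plan is to fix a triple $(x,y,z) \in \mathbb S^+(\mho)$ and extract the operator $\mathbf{S}^A_{z \gets y \gets x}: E_x \to E_z$ from the single piece of data that we have already shown is determined by $L_A$, namely the principal symbol $\sigma[v_{out}^A](z,\zeta)$ at the outgoing covector $\zeta = \dot\gamma^*_{z\gets y}(s_{out})$ (see (\ref{recovery_of_prinsymb})). I set $x_1 = x$, $\xi_1 = \dot\gamma^*_{y\gets x}(s_{in})$, $\eta = \dot\gamma^*_{z\gets y}(0)$, and apply Lemma \ref{Lauri's Lemma} together with (\ref{eta_lin_comb}) to pick lightlike $\xi_2,\xi_3$ close to $\xi_1$; this yields a unique decomposition $\eta = \eta^{(1)}+\eta^{(2)}+\eta^{(3)}$ with $\eta^{(\mathbf{j})} \in N_y^*K_\mathbf{j}\setminus 0$, and the associated past points $x_\mathbf{j}$ of (\ref{def_xj}). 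Amplitudes $c_\mathbf{j} \in E_{x_\mathbf{j}}$ and symbol operators $\chi_\mathbf{j}$ satisfying $(\chi 1)$--$(\chi 3)$ then determine sources $f_\mathbf{j}$ as in (\ref{def_fj}), and the identity $\sigma[v_{out}^A](z,\zeta) = \sigma[v_{out}^B](z,\zeta)$ follows from (\ref{recovery_of_prinsymb}).

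Next I would compute $\sigma[v_{out}](z,\zeta)$ explicitly. Each $v_\mathbf{j}$ is the Lagrangian distribution supplied by Theorem \ref{th_parametrix} for the Cauchy problem (\ref{eq_for_vj}); the transport equation (\ref{P_sigma_u}) along $\gamma_{y\gets x_\mathbf{j}}$, combined with the homogeneity of $\chi_\mathbf{j}$ and Proposition \ref{prop_P_rescaling}, shows that the principal symbol $\sigma[v_\mathbf{j}](y,\eta^{(\mathbf{j})})$ is the parallel transport $\mathbf{P}^A_{y\gets x_\mathbf{j}}(c_\mathbf{j})$ multiplied by a scalar factor that is $A$-independent (it involves only $\omega$, $\sigma[\chi_\mathbf{j}]$, $e^{-\rho}$ on the half-density bundle, and the scalar-valued map $\mathscr R$, none of which depend on the connection). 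Lemma \ref{lem : product of conormal distributions} then computes the symbol of the cut-off interaction $\chi f_{out}$ at $(y,\eta)$ as the sum of the three cyclic products, and a second application of Theorem \ref{th_parametrix} along $\beta_{out}$ propagates this initial value to $(z,\zeta)$ by $\mathbf{P}^A_{z\gets y}$ up to another $A$-independent scalar. Tracking all factors, one arrives at the identity (with $A$-independent scalars $\alpha_\mathbf{l}$)
\begin{equation*}
\sigma[v_{out}^A](z,\zeta) = \sum_{\mathrm{cyc}(\mathbf{j},\mathbf{k},\mathbf{l})} \alpha_\mathbf{l}\,\Re\bigl\langle \mathbf{P}^A_{y\gets x_\mathbf{j}}(c_\mathbf{j}),\,\mathbf{P}^A_{y\gets x_\mathbf{k}}(c_\mathbf{k})\bigr\rangle\,\mathbf{S}^A_{z\gets y\gets x_\mathbf{l}}(c_\mathbf{l}),
\end{equation*}
and similarly for $B$.

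The third step is to let $r \to 0$ in the parametrization (\ref{xi23}), so that $\xi_2,\xi_3 \to \xi_1$ and therefore $x_2,x_3 \to x$. Continuity of the parallel transport along lightlike geodesics in the Minkowski space gives $\mathbf{P}^A_{y\gets x_\mathbf{j}} \to \mathbf{P}^A_{y\gets x}$ and $\mathbf{S}^A_{z\gets y\gets x_\mathbf{j}} \to \mathbf{S}^A_{z\gets y\gets x}$ for every $\mathbf{j}$. Since $A$ is Hermitian, $\mathbf{P}^A_{y\gets x}$ is unitary and $\Re\langle \mathbf{P}^A_{y\gets x}(c_\mathbf{j}),\mathbf{P}^A_{y\gets x}(c_\mathbf{k})\rangle = \Re\langle c_\mathbf{j},c_\mathbf{k}\rangle$, so the limit identity reads
\begin{equation*}
\mathbf{S}^A_{z\gets y\gets x}(C) = \mathbf{S}^B_{z\gets y\gets x}(C), \qquad C := \sum_{\mathrm{cyc}(\mathbf{j},\mathbf{k},\mathbf{l})} \alpha_\mathbf{l}\,\Re\langle c_\mathbf{j},c_\mathbf{k}\rangle\, c_\mathbf{l}.
\end{equation*}
A short computation (e.g.\ taking $c_2 = c_3 = c$ orthogonal to $c_1$ so that $C$ reduces to $\alpha_3 |c|^2 c_1$, then varying $c_1$ and $c$) shows that $C$ can be made to range over all of $E_x$ provided at least one $\alpha_\mathbf{l} \neq 0$, proving $\mathbf{S}^A_{z\gets y\gets x} = \mathbf{S}^B_{z\gets y\gets x}$.

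The main obstacle is the bookkeeping in the second step: one must carry through the half-density trivialization $\omega$, the exponential weight $e^{\rho}$ from $\mathrm{div}_\omega H_P$, the map $\mathscr R$ of Theorem \ref{th_parametrix}, and the $2\pi$/$\imath$ factors of the product formula in Lemma \ref{lem : product of conormal distributions}, in order to verify both that the scalar coefficients $\alpha_\mathbf{l}$ are genuinely $A$-independent and that at least one of them is non-zero for a generic choice of the $\chi_\mathbf{j}$. A secondary technicality is the $r \to 0$ limit: since $\eta^{(\mathbf{j})}$ blows up like $r^{-2}$ in (\ref{eta_lin_comb}), the symbol amplitudes scale by a common power of $r$ that must be divided out uniformly on both sides of the symbol identity, which is guaranteed by the positive homogeneity $(\chi 1)$ of $\sigma[\chi_\mathbf{j}]$ together with Proposition \ref{prop_P_rescaling}.
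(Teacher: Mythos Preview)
Your proposal is correct and follows essentially the same route as the paper: compute $\sigma[v_{out}](z,\zeta)$ via Proposition \ref{prop_P_rescaling}, Lemma \ref{lem : product of conormal distributions}, and a second application of Theorem \ref{th_parametrix}, then let $x_2,x_3\to x$ and specialize the amplitudes. The paper in fact finds that the three scalar coefficients coincide (so a single $\alpha_{in}$ multiplies $c_{in}$) and takes $c_2,c_3\to c_1$ rather than your orthogonality trick; note that your example $c_2=c_3=c\perp c_1$ vacuously fails for $n=1$, so in the line-bundle case you should fall back on $c_2=c_3=c_1$ as the paper does.
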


We give a constructive proof of Theorem \ref{th_from_L_to_S} in the form of a method that recovers $\mathbf{S}^A_{z \gets y \gets x}$ for any $(x,y,z) \in \mathbb S^+(\mho)$ given $L_A$.
Therefore we continue considering only a single connection $A$.

Let $(x,y,z) \in \mathbb S^+(\mho)$ and let $\eta, \xi_1 \in T_y^* M$ be defined by (\ref{def_eta_xi}). Without loss of generality we may assume that $\eta$ and $\xi_1$  are of the form (\ref{xi_eta_form}), and letting again $\xi_2, \xi_3 \in T_y^* M$ be as in Lemma \ref{Lauri's Lemma}, we have (\ref{eta_lin_comb}). 
Recall that $K_\mathbf{j}$ is defined by (\ref{def_Kj}). Decomposing $\eta$ as in (\ref{eta_decomp}),
it holds for small $r > 0$ that 
    \begin{align}\label{eta_components}
\eta^{(1)} = -r^{-2} \lambda_1 \xi_1, \quad \eta^{(\mathbf{j})} = r^{-2} \lambda_\mathbf{j} \xi_\mathbf{j}, \quad \mathbf{j} = 2,3, 
    \end{align}
where
    \begin{align*}
\lambda_1 = 2 b + \mathcal O(r),
\quad
\lambda_\mathbf{j} = b + \mathcal O(r),
\quad \mathbf{j} = 2,3.
    \end{align*}
We recall also that $b \ge 1$, see Remark \ref{rem_signs_mho}.
In particular, $\lambda_\mathbf{j} > 0$, $\mathbf{j} = 1,2,3$, for small $r > 0$.
Let $v_\mathbf{j}$ be again the solution of (\ref{eq_for_vj}) with $f_\mathbf{j}$ as in (\ref{def_fj}), and write 
    \begin{align*}
\hat{v}_\mathbf{j} = \sigma[v_\mathbf{j}](y,\eta^{(\mathbf{j})}), \quad \mathbf{j} = 1,2,3.
    \end{align*}
The product calculus in Lemma \ref{lem : product of conormal distributions} implies that $\chi f_{out}$  in (\ref{eq_wave_out}) satisfies
    \begin{align}\label{sigma_F}
|g(y)|^{1/2} \sigma[\chi f_{out}](y,\eta) = \Re(\langle \hat{v}_1, \hat{v}_2\rangle) \hat{v}_3 + \Re(\langle \hat{v}_1, \hat{v}_3\rangle) \hat{v}_2 + \Re(\langle \hat{v}_2, \hat{v}_3\rangle) \hat{v}_1,
    \end{align}
where we used $\chi(y,\eta) = 1$.

We will apply Proposition \ref{prop_P_rescaling} to  (\ref{eq_for_vj}). 
Toward that end, we choose a strictly positive $\omega_\mathbf{j} \in C^\infty(\overline{N^* K_\mathbf{j}}; \Omega^{1/2})$ satisfying (\ref{omega_symmetry}),
and denote by $\rho_\mathbf{j}$ the factor (\ref{def_rho}) along the bicharacteristic $\beta_\mathbf{j}(s) = (\gamma_{y \gets x_\mathbf{j}}(s), \dot \gamma_{y \gets x_\mathbf{j}}^*(s))$.
It follows from (\ref{def_fj}) and ($\chi$1) that in any coordinates there holds, ignoring $2 \pi$ factors,
    \begin{align}\label{from_delta_to_f}
\sigma[|g|^{1/4} f_\mathbf{j}](x_\mathbf{j}, \pm \lambda \xi_\mathbf{j})
&= c_\mathbf{j} |g(x_\mathbf{j})|^{1/4} \sigma[\chi_\mathbf{j}](x_\mathbf{j},\pm \lambda \xi_\mathbf{j})
= \lambda^q c_\mathbf{j} \alpha_\mathbf{j}, \quad \lambda > 0,
    \end{align}
where $\alpha_\mathbf{j} = |g(x_\mathbf{j})|^{1/4} \sigma[\chi_\mathbf{j}](x_\mathbf{j}, \xi_\mathbf{j})$.
Of course, $|g(x_\mathbf{j})|^{1/4} = 1$ in the Cartesian coordinates, but $\alpha_\mathbf{j}$ transforms as half-density. Observe, however, that $\alpha_\mathbf{j} > 0$ in any coordinate system. 

Let us translate the origin to $x_1$ and pass to the coordinates $(\tilde x^0, \tilde x', \tilde \xi_0, \tilde \xi')$ in (\ref{Fm_coords}) in Appendix \ref{appendix2} below.
In these coordinates $(x_1, -\lambda\xi_1) = (0; 0, -\lambda\xi_1')$ for $\lambda > 0$. Moreover, (\ref{transp_init_m}) and (\ref{from_delta_to_f}) give
$$
\sigma[v_1](0, -\lambda \xi_1') = - \iota_0 \lambda^{q-1} c_1 \alpha_1, \quad \lambda > 0,
$$ 
where $\iota_0 \in \C \setminus 0$ is a constant that contains all the $2 \pi$ and $\imath$ factors. 
In particular, the positive homogeneity (\ref{poshomog_at_x}) holds for $v_1$ at $(x_1, -\xi_1)$.
The coordinates (\ref{Fm_coords}) can be used in a neighbourhood of the bicharacteristic segment $\beta_1^-(s) = (\gamma_{y \gets x_1}(s), -\dot \gamma_{y \gets x_1}^*(s))$, $s \in [0,s_{in}]$, and 
in these coordinates $(y, -\xi_1) = (s_{in} 0; 0, -\xi_1')$.
It follows from Proposition \ref{prop_P_rescaling} and (\ref{eta_components}) that 
    \begin{align*}
e^{\rho_1(s_{in})}(\omega_1^{-1}\sigma[v_1])(s_{in}, -r^{-2} \lambda_1 \xi_1') 
= (r^{-2} \lambda_1)^{q-1} \mathbf{P}^A_{y \gets {x_1}} ( (\omega_1^{-1}\sigma[v_1])(0, -\xi_1') ),
    \end{align*}
and therefore in the coordinates (\ref{Fm_coords}),
    \begin{align}\label{hat_v1}
\hat v_1 = -(r^{-2} \lambda_1)^{q-1} \iota_0 \tilde \alpha_1 \mathbf{P}^A_{y \gets {x_1}} c_1,
    \end{align}
where $\tilde \alpha_1 := e^{-\rho_1(s_{in})}
\omega_1(s_{in}, \xi_1') \omega_1^{-1}(0,\xi_1') \alpha_1$ contains all the volume factors.

Analogously, we may use coordinates of the form (\ref{Fp_coords}) in a neighbourhood of the bicharacteristic segment $\beta_{\mathbf j}(s)$, $s \in [0,s_{in}]$, $\mathbf j = 2,3$, to obtain
    \begin{align}\label{hat_v23}
\hat v_{\mathbf j} = (r^{-2} \lambda_{\mathbf j})^{q-1} \iota_0 \tilde \alpha_{\mathbf j} \mathbf{P}^A_{y \gets {x_{\mathbf j}}} c_{\mathbf j}, \quad \mathbf j = 2,3,
    \end{align}
where $\tilde \alpha_{\mathbf j} = e^{-\rho_{\mathbf j}(s_{in})}
\omega_{\mathbf j}(s_{in}, \xi_{\mathbf j}') \omega_{\mathbf j}^{-1}(0,\xi_{\mathbf j}') \alpha_{\mathbf j}$.
We have given the volume factors $\tilde \alpha_{\mathbf j}$, $\mathbf j = 1,2,3$, in different coordinate systems, however, the way they transform under changes of coordinates is inconsequential. For our present purposes it is enough to observe that they are independent from the connection $A$ and that they satisfy $\tilde \alpha_{\mathbf j} > 0$ in any coordinate system. Let us also emphasize that $\hat v_{\mathbf j}$, as the value of a section of $E \otimes \Omega^{1/2}$ over $K_{\mathbf j}$ at the point 
$(y,\eta^{(\mathbf{j})})$, is a coordinate invariant quantity. 

Combining (\ref{sigma_F}), (\ref{hat_v1}) and (\ref{hat_v23}) yields $|g(y)|^{1/2} \sigma[\chi f_{out}](y,\eta) = \alpha_{in} c_{in}$ where 
    \begin{align*}
c_{in} &= 
\Re(\langle c_{1,in} , c_{2,in} \rangle) 
c_{3,in} 
+ 
\Re(\langle c_{1,in}, c_{3,in} \rangle)
c_{2,in} 
+ 
\Re(\langle c_{2,in}, c_{3,in} \rangle) 
c_{1,in}),
\\
\alpha_{in} &=
- r^{-6(q-1)} (\lambda_1\lambda_2\lambda_3)^{q-1} |\iota_0|^2 \iota_0 \tilde \alpha_1\tilde \alpha_2\tilde \alpha_3, \quad c_{\mathbf j,in} = \mathbf{P}^A_{y \gets x_{\mathbf j}} c_{\mathbf j}.
    \end{align*}
Similarly with the above, we may apply Proposition \ref{prop_P_rescaling} to (\ref{eq_wave_out}) and the bicharacteristic $\beta_{out}$ defined by (\ref{def_beta_out}). This gives, ignoring $\imath$ and $2\pi$ factors,
    \begin{align*}
\sigma[v_{out}](z,\zeta) = \alpha_{out} \alpha_{in}
\mathbf{P}^A_{z \gets y}c_{in},
    \end{align*}
for some volume factor $\alpha_{out} > 0$ independent from the connection $A$.

Recall that the source-to-solution map $L_A$ determines $\sigma[v_{out}](z,\zeta)$ via (\ref{recovery_of_prinsymb}), and observe that both the factors $\alpha_{in}$ and $\alpha_{out}$ are independent from $A$. Therefore $L_A$ determines the parallel transport $\mathbf{P}^A_{z \gets y}c_{in}$.
Letting $x_2,x_3 \to x_1 = x$ and $c_2,c_3 \to c_1$,
we have
$$
\mathbf{P}^A_{z \gets y}c_{in} \to
3 |\mathbf{P}^A_{y \gets x} c_1|^2 \mathbf{P}^A_{z \gets y} \mathbf{P}^A_{y \gets x} c_1 = 3 |c_1|^2 \mathbf{S}^A_{z \gets y \gets x} c_1,
$$
where we used the fact that $\mathbf{P}^A_{y \gets x}$ is unitary. Thus $L_A$ determines $\mathbf{S}^A_{z \gets y \gets x}$ after varying $c_1 \in E_x \setminus 0$, and we have shown Theorem \ref{th_from_L_to_S}.

\section{Inversion of the broken light ray transform}\label{sec_X-ray}

From now on we assume that $E$ is the trivial bundle $M \times \C^n$. Recall the definition (\ref{def_mho}) of $\mho$. More generally, we write  
$$
\mho(\epsilon) = (0,1) \times B(\epsilon)
$$
where $B(\epsilon)$ is the open ball of radius $\epsilon > 0$, centred at the origin of $\R^3$.
We use also the shorthand notation
$$
\mathbb D(\mho(\epsilon)) = \{y \in M : \text{there is $(x,y,z) \in \mathbb S^+(\mho(\epsilon))$}\},
\quad 0 < \epsilon < \epsilon_0,
$$
where $\mathbb S^+(\mho(\epsilon))$ is defined by (\ref{def_diamonds_etc}).

We recall that $\mathbf{S}^A_{z \gets y \gets x}$ is defined by (\ref{def_S}), that is, 
    \begin{align*}
\mathbf{S}^A_{z \gets y \gets x} &= \mathbf{P}^A_{z \gets y} \mathbf{P}^A_{y \gets x}, \quad (x,y,z) \in \mathbb S^+(\mho),
    \end{align*}
and that the parallel transport map $\mathbf{P}^A_{y \gets x}$ is the fundamental solution to the ordinary differential equation (\ref{parallel_transport}).
In this section we will prove the following:

\begin{proposition}\label{prop_S}
Let $A$ and $B$ be two connections in $\R^{1+3}$ such 
that for all $0 < \epsilon < \epsilon_0$ there holds
\begin{equation}\label{S_equal}
\mathbf{S}^A_{z \gets y \gets x} = \mathbf{S}^B_{z \gets y \gets x}, \quad \text{for all $(x,y,z) \in \mathbb S^+(\mho(\epsilon))$}.
\end{equation}
Then there exists a smooth $\mathbf{u}:\mathbb{D}(\Omega(\epsilon_0)) \to U(n)$ such that 
    \begin{align*}
\mathbf{u}|_{\mho(\epsilon_0)}=\id \quad \text{and}  \quad B=\mathbf{u}^{-1}d\mathbf{u}+\mathbf{u}^{-1}A\mathbf{u}.
    \end{align*}
\end{proposition}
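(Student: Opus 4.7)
The plan is to build $\mathbf{u}$ pointwise by comparing the $A$- and $B$-parallel transports out of $\mho(\epsilon_0)$, to use the hypothesis to ensure independence of the path chosen, and finally to read off the gauge equation by differentiating along lightlike rays.

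\textbf{Definition and well-definedness.} For $y \in \mathbb{D}(\mho(\epsilon_0))\setminus\mho(\epsilon_0)$, pick any triple $(x,y,z) \in \mathbb{S}^+(\mho(\epsilon_0))$ and set
\begin{equation*}
\mathbf{u}(y) := \mathbf{P}^A_{y \gets x}\,(\mathbf{P}^B_{y \gets x})^{-1};
\end{equation*}
on $\mho(\epsilon_0)$ set $\mathbf{u}=\id$ (consistent since $A=B$ on $\mho$, as noted after Theorem \ref{thm:main}). Rewriting $\mathbf{S}^A_{z\gets y\gets x}=\mathbf{S}^B_{z\gets y\gets x}$ as
\begin{equation*}
\mathbf{P}^A_{y \gets x}\,(\mathbf{P}^B_{y \gets x})^{-1} = (\mathbf{P}^A_{z \gets y})^{-1}\,\mathbf{P}^B_{z \gets y},
\end{equation*}
the right-hand side is independent of $x$, so $\mathbf{u}(y)$ is independent of the past segment $\gamma_{y\gets x}$ used; by symmetry it is also independent of the future segment $\gamma_{z\gets y}$. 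Since $A$ and $B$ are Hermitian, each parallel transport is unitary, hence $\mathbf{u}(y)\in U(n)$.

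\textbf{Smoothness.} In the Minkowski background the lightlike geodesic $\gamma_{y\gets x}$ is unique and depends smoothly on $(x,y)\in\mathbb{L}$, and the parallel transports along it depend smoothly on endpoints by standard ODE theory applied to (\ref{parallel_transport}). Hence $\mathbf{u}$ is smooth on $\mathbb{D}(\mho(\epsilon_0))\setminus\mho(\epsilon_0)$. The nested hypothesis across all $0<\epsilon<\epsilon_0$ gives freedom to shrink $x$ towards a boundary point of $\mho(\epsilon_0)$; combined with $\mathbf{P}^A_{y\gets x},\mathbf{P}^B_{y\gets x}\to\id$ as $x\to y$, this provides continuous and in fact smooth matching of $\mathbf{u}$ with $\id$ across $\partial\mho(\epsilon_0)$.

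\textbf{Gauge equation via differentiation.} Fix $y\in\mathbb{D}(\mho(\epsilon_0))$ and let $v\in T_y M$ be the tangent at $y$ of any lightlike geodesic $\gamma$ that reaches $\mho(\epsilon_0)$ in the past at $x=\gamma(-s_0)$. Along $\gamma$ the defining identity reads
\begin{equation*}
\mathbf{u}(\gamma(s))\,\mathbf{P}^B_{\gamma(s)\gets x} = \mathbf{P}^A_{\gamma(s)\gets x}.
\end{equation*}
Differentiating at $s=0$ via (\ref{parallel_transport}), substituting $\mathbf{P}^A_{y\gets x}=\mathbf{u}(y)\mathbf{P}^B_{y\gets x}$ and cancelling $\mathbf{P}^B_{y\gets x}$ on the right yields
\begin{equation*}
d\mathbf{u}(v) - \mathbf{u}(y)\langle B,v\rangle = -\langle A,v\rangle\,\mathbf{u}(y), \qquad\text{i.e.,}\qquad \langle B,v\rangle = \mathbf{u}^{-1}d\mathbf{u}(v) + \mathbf{u}^{-1}\langle A,v\rangle\,\mathbf{u}.
\end{equation*}
Because $\mho(\epsilon_0)$ is open, the set of lightlike $v\in T_y M$ realised this way is an open subset of the light cone at $y$; its linear span is all of $T_y M$. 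Linearity in $v$ on both sides then upgrades the identity to the $1$-form equation $B=\mathbf{u}^{-1}d\mathbf{u}+\mathbf{u}^{-1}A\mathbf{u}$ at $y$, as required.

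\textbf{Main obstacle.} The crux is well-definedness: only the broken-ray identity $\mathbf{S}^A=\mathbf{S}^B$ allows us to collapse the a priori path-dependent expression $\mathbf{P}^A_{y\gets x}(\mathbf{P}^B_{y\gets x})^{-1}$ to a genuine function of $y$. A secondary point is smoothness across $\partial\mho(\epsilon_0)$, which is precisely what makes the assumption that the data is given for the entire nested family $\{\mho(\epsilon)\}_{0<\epsilon<\epsilon_0}$ (rather than a single cylinder) convenient.
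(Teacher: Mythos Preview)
Your argument is essentially the paper's: define $\mathbf{u}(y)=\mathbf{P}^A_{y\gets x}(\mathbf{P}^B_{y\gets x})^{-1}$, use $\mathbf{S}^A=\mathbf{S}^B$ to make it independent of $x$ (your rewriting as $\mathbf{P}^A_{y\gets x}(\mathbf{P}^B_{y\gets x})^{-1}=(\mathbf{P}^A_{z\gets y})^{-1}\mathbf{P}^B_{z\gets y}$ is a slightly slicker packaging of the paper's Lemma~\ref{lem_gauge}), differentiate along lightlike rays to obtain the gauge identity, and span $T_yM$ by varying the incoming direction.

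One point to fix: you justify ``$\mathbf{u}=\id$ on $\mho(\epsilon_0)$'' by appealing to $A=B$ on $\mho$, but that equality is a consequence of $L_A=L_B$ in Theorem~\ref{thm:main} and is \emph{not} part of the hypothesis of Proposition~\ref{prop_S}. The proposition is a standalone statement about the broken light ray transform, so you cannot import it. The paper avoids this by using only the nested family: for $y\in\mho(\epsilon_0)\setminus\mu([0,1])$ one picks $\epsilon$ with $y\notin\mho(\epsilon)$, applies the formula with $x\in\mho(\epsilon)$, and lets $x\to y$ along $\partial\mho(\epsilon)$ to get $\mathbf{u}(y)=\id$; continuity then extends across $\mu([0,1])$. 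You do invoke the nested family and the limit $\mathbf{P}^A_{y\gets x},\mathbf{P}^B_{y\gets x}\to\id$ in your smoothness paragraph, which is exactly the right mechanism, so the repair is simply to drop the parenthetical appeal to $A=B$ on $\mho$ and let that limiting argument carry the entire burden of showing $\mathbf{u}|_{\mho(\epsilon_0)}=\id$.
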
 

Observe that the causal diamond, defined by (\ref{def_diamond}), satisfies $\mathbb D = \mathbb D(\mho(\epsilon_0))$, and therefore Theorem \ref{thm:main} follows immediately by combining Theorem \ref{th_from_L_to_S} and Proposition \ref{prop_S}. 
Similarly to $\mathbb S^{out}(\mho)$ we define
    \begin{align*}
\mathbb S^{in}(\mho) = \{(x,y) : \text{$(x,y,z) \in \mathbb S^+(\mho)$ for some $z \in \mho$}\}.
    \end{align*}

\begin{lemma}\label{lem_gauge}
Let $A$ and $B$ be two connections in $\R^{1+3}$.
We define $\mathbf{u}(y,x) = \mathbf{P}^A_{y \gets x} \mathbf{P}^B_{x \gets y}$ for $(x,y) \in \mathbb S^{in}(\mho)$.
If (\ref{S_equal}) holds then $\mathbf{u}(y,x_1) = \mathbf{u}(y,x_2)$ for all $(x_j,y) \in \mathbb S^{in}(\mho)$, $j=1,2$.
\end{lemma}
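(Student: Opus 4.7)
The plan is to produce a single common future endpoint $z$ that works for both $x_1$ and $x_2$, and then extract the desired equality by a direct algebraic manipulation of the hypothesis (\ref{S_equal}). Since $(x_1, y) \in \mathbb S^{in}(\mho)$, the definition (\ref{def_diamonds_etc}) supplies some $z \in \mho$ with $(x_1, y, z) \in \mathbb S^+(\mho)$; in particular $y \notin \mho$, $(y,z) \in \mathbb L$ and $y < z$. On the other hand, $(x_2, y) \in \mathbb S^{in}(\mho)$ already guarantees $x_2 \in \mho$, $(x_2, y) \in \mathbb L$, and $x_2 < y$. Combining these, the triple $(x_2, y, z)$ satisfies every condition in the definition of $\mathbb S^+(\mho)$ with the \emph{same} $z$. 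This geometric step is the crux: in $\mathbb S^+(\mho)$ the past and future endpoints are decoupled apart from both lying in $\mho$ and being causally joined to $y$ by lightlike geodesics.

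Next, I would apply the hypothesis (\ref{S_equal}) to each of the two triples $(x_j, y, z)$, $j=1,2$, yielding
$$
\mathbf{P}^A_{z \gets y}\,\mathbf{P}^A_{y \gets x_j} = \mathbf{P}^B_{z \gets y}\,\mathbf{P}^B_{y \gets x_j}.
$$
Rearranging this as
$$
(\mathbf{P}^B_{z \gets y})^{-1}\mathbf{P}^A_{z \gets y} = \mathbf{P}^B_{y \gets x_j}\,(\mathbf{P}^A_{y \gets x_j})^{-1},
$$
the left-hand side is independent of $j$, so the right-hand sides at $j=1$ and $j=2$ coincide. Using $(\mathbf{P}^A_{y \gets x_j})^{-1} = \mathbf{P}^A_{x_j \gets y}$ and then inverting both sides gives
$$
\mathbf{P}^A_{y \gets x_1}\,\mathbf{P}^B_{x_1 \gets y} = \mathbf{P}^A_{y \gets x_2}\,\mathbf{P}^B_{x_2 \gets y},
$$
which is precisely $\mathbf{u}(y, x_1) = \mathbf{u}(y, x_2)$.

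There is no real obstacle beyond spotting the common-endpoint construction. The algebraic content is that (\ref{S_equal}) is equivalent to saying that the quantity $\mathbf{P}^B_{y \gets w}(\mathbf{P}^A_{y \gets w})^{-1}$ takes a common value when $w = x$ (past lightlike direction reaching $\mho$) and $w = z$ (future lightlike direction reaching $\mho$); fixing the future leg $z$ and letting the past leg $x$ vary then forces $\mathbf{u}(y,\cdot)$ to be constant in its second argument, as required.
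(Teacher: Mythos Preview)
Your proof is correct and follows essentially the same approach as the paper: both arguments identify a common future endpoint $z$ that works for $(x_1,y)$ and $(x_2,y)$ simultaneously, then apply (\ref{S_equal}) to each triple and rearrange. The only cosmetic difference is that the paper cancels the $z$-leg by multiplying $\mathbf{S}^A_{x_2 \gets y \gets z}\,\mathbf{S}^A_{z \gets y \gets x_1}$, whereas you isolate the $z$-dependent factor $(\mathbf{P}^B_{z \gets y})^{-1}\mathbf{P}^A_{z \gets y}$ on one side and note it is independent of $j$; these are equivalent manipulations.
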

\begin{proof}
Note that $\mathbf{P}^A_{y \gets x}$ is a linear isomorphism and $(\mathbf{P}^A_{y \gets x})^{-1} = \mathbf{P}^A_{x \gets y}$. In particular, 
$$
(\mathbf{S}^A_{z \gets y \gets x})^{-1} = \mathbf{S}^A_{x \gets y \gets z},
$$
and (\ref{S_equal}) implies that 
$\mathbf{S}^A_{x \gets y \gets z} = \mathbf{S}^B_{x \gets y \gets z}$
for all $(x,y,z) \in \mathbb S^+(\mho)$.

Consider now $y$, $x_1$ and $x_2$ as in the claim. 
Then there is $z \in \mho$ such that $(x_j, y, z) \in \mathbb S^+(\mho)$ for $j=1,2$.
We have
$$
\mathbf{S}^A_{x_2 \gets y \gets z} \mathbf{S}^A_{z \gets y \gets x_1}
= \mathbf{P}^A_{x_2 \gets y} \mathbf{P}^A_{y \gets z} \mathbf{P}^A_{z \gets y} \mathbf{P}^A_{y \gets x_1} 
= \mathbf{P}^A_{x_2 \gets y} \mathbf{P}^A_{y \gets x_1}.
$$
But $\mathbf{S}^A_{x_2 \gets y \gets z} \mathbf{S}^A_{z \gets y \gets x_1} = \mathbf{S}^B_{x_2 \gets y \gets z} \mathbf{S}^B_{z \gets y \gets x_1}$, and therefore 
$
\mathbf{P}^A_{x_2 \gets y} \mathbf{P}^A_{y \gets x_1} = \mathbf{P}^B_{x_2 \gets y} \mathbf{P}^B_{y \gets x_1}.
$
We apply $\mathbf{P}^A_{y \gets x_2}$ on left and $\mathbf{P}^B_{x_1 \gets y}$ on right, and obtain 
    \begin{align*}
\mathbf{P}^A_{y \gets x_1} \mathbf{P}^B_{x_1 \gets y} = \mathbf{P}^A_{y \gets x_2} \mathbf{P}^B_{x_2 \gets y}.
    \end{align*}
\end{proof}

\begin{proof}[Proof of Proposition \ref{prop_S}]
For $y \in \mathbb D(\mho)$ there are $x,z \in \mho$ such that $(x,y,z) \in \mathbb S^+(\mho)$
and we define $\mathbf{u}(y,x)$ as in Lemma \ref{lem_gauge}. It follows from Lemma \ref{lem_gauge} that
$\mathbf{u}(y,x) = \mathbf{u}(y)$ and $\mathbf{u}$ can be viewed as a function of
$y \in \mathbb D(\mho)$. The parallel transport map 
takes values in $U(n)$ and therefore $\mathbf{u}$ is a section of $U(n)$ over $\mathbb D(\mho)$. 

Observe that $\mho \cap \mathbb D(\mho) = \emptyset$ by the definition of $\mathbb S^+(\mho)$, see (\ref{def_diamonds_etc}).
For this reason we shrink $\mho$, that is, we
define $\mathbf{u}$ as above but replace $\mho$ with $\mho(\epsilon)$, $0 < \epsilon < \epsilon_0$. This allows us to define $\mathbf{u}$ on $\mho(\epsilon_0) \setminus \mu([0,1])$, see (\ref{def_mu}) for the definition of the path $\mu$. By the continuity of the parallel transport map, we can define $\mathbf{u}$ on the whole $\mho(\epsilon_0)$.
Using the continuity again, we let $x \to y \in (0,1) \times \p B(\epsilon)$ in $\mathbf{u}(y,x)$, and see that $\mathbf{u}(y) = \id$
for any $y \in (0,1) \times \p B(\epsilon)$ and any $0 < \epsilon < \epsilon_0$. Using the continuity once more, we see that $\mathbf{u} = \id$ in the whole $\mho = \mho(\epsilon_0)$.

We write $\tilde A = \mathbf{u}^{-1}d\mathbf{u}+\mathbf{u}^{-1}A\mathbf{u}$.
Let $(x,y) \in \mathbb S^{in}(\mho(\epsilon))$, $0 < \epsilon < \epsilon_0$, and consider the geodesic $\gamma_{y \gets x}$ from $x$ to $y$. Let $s_{in} > 0$ satisfy $y = \gamma_{y \gets x}(s_{in})$.
By using the choice $\mathbf{u}(y) = \mathbf{u}(y,x)$ we will show that
\begin{equation}\label{gauge_match}
\langle \tilde A, \dot \gamma_{y \gets x} \rangle = \pair{B, \dot \gamma_{y \gets x}}.
\end{equation}

We write $\gamma = \gamma_{y \gets x}$ and consider the fundamental matrix solution $$\mathbf{U}^A:\R^2 \to U(n)$$ of (\ref{parallel_transport}).
That is, for fixed $s \in \R$, the function $\mathbf{U}^A(t,s)$ in $t$ is the solution of
\begin{equation}\label{fundamental_sol}
\begin{cases}
\p_t \mathbf{U}^A(t,s) + \pair{A, \dot{\gamma}(t)} \mathbf{U}^A(t,s)=0, & \text{on $\R$},
\\
\mathbf{U}^A(s,s)=\id.
\end{cases}
\end{equation}
Clearly $\mathbf{P}_{\gamma}^A =\mathbf{U}^A(s_{in},0)$. 
We define $\mathbf U^B$ analogously.

By (\ref{fundamental_sol}) it holds that 
$$
\p_t \mathbf{U}^A(s,s) = - \pair{A, \dot{\gamma}(s)} \mathbf{U}^A(s,s) 
= -  \pair{A, \dot{\gamma}(s)}.
$$
Moreover, differentiating (\ref{fundamental_sol}) in $s$, gives
$$
\begin{cases}
\p_t \p_s \mathbf{U}^A(t,s) + \pair{A, \dot \gamma(t)} \p_s \mathbf{U}^A(t,s) = 0,
\\
\p_t \mathbf{U}^A(s,s) + \p_s \mathbf{U}^A(s,s) = 0.
\end{cases}
$$
Therefore $\mathbf{W}(t,s) = \p_s \mathbf{U}^A(t,s)$ satisfies
$$
\begin{cases}
\p_t \mathbf{W}(t,s) + \pair{A, \dot \gamma(t)} \mathbf{W}(t,s) = 0,
\\
\mathbf{W}(s,s) = \pair{A, \dot{\gamma}(s)},
\end{cases}
$$
and writing $\mathbf{W}$ in terms of the fundamental solution $\mathbf{U}^A$ gives
\begin{equation}\label{ps_U}
\p_s \mathbf{U}^A(t,s) = \mathbf{U}^A(t,s) \pair{A, \dot{\gamma}(s)}.
\end{equation}

We have $\mathbf{u}(\gamma(t),y) = \mathbf{U}^A(t, 0) \mathbf{U}^B(0, t)$, and 
using (\ref{fundamental_sol}) for $A$ and (\ref{ps_U}) for $B$,
\begin{align*}
\pair{d\mathbf{u}, \dot \gamma(t)}
&= \p_t ( \mathbf{U}^A(t, 0) \mathbf{U}^B(0, t) )
\\&= - \pair{A, \dot{\gamma}(t)} \mathbf{U}^A(t,0) \mathbf{U}^B(0, t) +
\mathbf{U}^A(0,t) \mathbf{U}^B(0, t) \pair{B, \dot{\gamma}(t)}
\\&= - \pair{A \mathbf{u}, \dot{\gamma}(t)} + \pair{\mathbf{u} B, \dot{\gamma}(t)}.
\end{align*}
We obtain (\ref{gauge_match}) after rearranging and multiplying both sides with $\mathbf{u}^{-1}$,
$$
\pair{\mathbf{u}^{-1} d\mathbf{u} + \mathbf{u}^{-1} A \mathbf{u}, \dot \gamma(t)}
= \pair{B, \dot{\gamma}(t)}.
$$

We have shown that (\ref{gauge_match}) holds for any $(x,y) \in \mathbb S^{in}(\mho(\epsilon))$.
As $\mho(\epsilon)$ is open, it follows from Lemma \ref{Lauri's Lemma}, that the vectors $\dot \gamma_{y \gets x}$ span $T_y M$ as $x$ varies in the set $\{x \in \mho : (x,y) \in \mathbb S^{in}(\mho(\epsilon)) \}$.
Hence $\tilde A = B$ at $y$ for each $y \in \mathbb D(\mho(\epsilon))$ and each $0 < \epsilon < \epsilon_0$.
\end{proof}

\begin{appendix}

\section{Conormal distributions and IPL distributions}
\label{appendix}

We formulate the framework of $E \otimes \Omega^{1/2}$ valued conormal and IPL distributions for the application to the connection wave equation. 
The contents of the appendix is modelled on the pioneering works of H\"{o}rmander \cite{Hormander-FIO1}, Duistermaat-H\"ormander \cite{Duistermaat-Hormander-FIO2} and Melrose-Uhlmann \cite{Melrose-Uhlmann-CPAM1979}.  Additionally, we refer the reader to H\"{o}rmander's books \cite{Hormander1, Hormander-Vol3} for the basics of distributions and half densities.

Let us begin with the notion of conormal distribution.

\begin{definition}
Let $X$ be an $n$-dimensional manifold and $Y$ an $(n - N)$-dimensional closed submanifold. We say a section-valued distribution	$u \in \mathcal{D}^\prime(X, E \otimes \Omega^{1/2})$ is a member of the conormal distributions $I^m(X; N^\ast Y; E \otimes \Omega^{1/2})$, if in local coordinates $(x', x'') \in \R^{N + (n-N)}$ on $X$, such that $Y$ is defined by $x' = 0$,
the distribution $u$ 
takes the following form 
    \begin{align}\label{def_conorm_dist}
u = (2\pi)^{-(n + 2N)/4}e^{- \imath \pi N/4}\int_{\mathbb{R}^N} e^{i \theta x'} a(x, \theta) \,d\theta,
    \end{align}
where $a \in S^{m + n/4}(\R^n \times (\R^N \setminus 0); E\otimes \Omega^{1/2})$.
\end{definition}	

From the viewpoint of Lagrangian distributions, $u$ is associated with the conormal bundle $N^\ast Y \setminus 0$ and $\text{WF}(u) \subset N^\ast Y \setminus 0$. If $\xi = (\xi', \xi'')$ denotes the induced coordinates on the cotangent space of $X$,  $N^\ast Y$ is defined by $\{x' = 0, \xi''= 0\}$. The principal symbol $\sigma[u] \in S^{m+n/4}(N^\ast Y \setminus 0; E \otimes \Omega^{1/2})$ is defined as $\sigma[u](x'',\xi') = a(0, x'', \xi')$. We have the following exact sequence
\begin{equation*}
\begin{split}\lefteqn{
0 \hookrightarrow I^{m - 1}(X; N^\ast Y \setminus 0; E\otimes \Omega^{1/2} )    \hookrightarrow I^{m}(X; N^\ast Y \setminus 0; E\otimes \Omega^{1/2} ) } \\ & \quad\quad\quad \quad\quad\quad \quad\quad\quad   \xrightarrow{\sigma} S^{m + n/4}( N^\ast Y \setminus 0; E\otimes \Omega^{1/2} ) \rightarrow 0.\end{split}
\end{equation*}



When the source term for a linear wave equation is a conormal distribution, the solution is not in the same class, and the wider class of IPL distributions was introduced in \cite{Melrose-Uhlmann-CPAM1979} to tackle this problem.

We begin with the model case where $\tilde{\Lambda}_0 = T_0^\ast \mathbb{R}^n \setminus 0$ and   
$$\tilde{\Lambda}_1 = \{(x, \xi) \in T^\ast \mathbb{R}^n \setminus 0 : \xi_1 = 0,\ x^1 \geq 0,\ x^2 = \cdots = x^n = 0\}.$$
We write $x = (x^1, x')$ and $\xi = (\xi_1, \xi')$.

\begin{definition}
We say that a compactly supported distribution $u \in \mathcal{E}'(\mathbb{R}^n; E \otimes \Omega^{1/2})$ belongs to the space $I^m(\mathbb{R}^n; \tilde{\Lambda}_0, \tilde{\Lambda}_1; E \otimes \Omega^{1/2})$, if modulo compactly supported smooth functions, it can be expressed as $$
u= \int_0^\infty \int_{\mathbb{R}^n} \exp (\imath \xi_1 (x^1 - s) + \xi'x')\, a(s, x, \xi) \,d\xi ds,$$ where the amplitude $a \in S^{m + 1/2 - n/4}(\R^n \times (\R^n \setminus 0); E\otimes \Omega^{1/2})$ is supported on the conical closure of a compact set.
\end{definition}

Observe that $\tilde \Lambda_0 = N^* \{0\} \setminus 0$,
and that $\tilde \Lambda_1 \setminus \p \tilde \Lambda_1 \subset N^* Y \setminus 0$ where 
$Y = \{x' = 0\}$.
The wavefront set of $A$ is contained in the union $\tilde{\Lambda}_0 \cup \tilde{\Lambda}_1$ and away from the intersection $\p \tilde\Lambda_1 = \tilde\Lambda_0 \cap \tilde\Lambda_1$, the IPL distribution $A$ is a conormal distribution in the following sense, see \cite[pp. 486--487 Proposition 2.3 and Remark 2.7]{Melrose-Uhlmann-CPAM1979}. 

\begin{proposition}\label{prop : model case symbol map}Suppose $u \in I^m(\mathbb{R}^n; \tilde{\Lambda}_0, \tilde{\Lambda}_1; E \otimes \Omega^{1/2})$ and $\chi$ is a  zero-th order pseudodifferential operator. We have
\begin{eqnarray*}
&\chi u \in I^m(\mathbb{R}^n; N^* Y \setminus 0; E\otimes\Omega^{1/2})& \mbox{when $\WF(\chi) \cap  \tilde{\Lambda}_0 = \emptyset$};\\
& \chi u \in I^{m - 1/2}(\mathbb{R}^n;  N^* \{0\} \setminus 0; E\otimes\Omega^{1/2})& \mbox{when $\WF(\chi) \cap  \tilde{\Lambda}_1 = \emptyset$}.\end{eqnarray*}
This microlocalization leads to the following local principal symbol maps, ignoring $2 \pi$ and $\imath$ factors related to the normalization in (\ref{def_conorm_dist}),
\begin{eqnarray}&\sigma^{(1)}[u] = a(x^1, (x^1, 0), (0, \xi'))& \mbox{on $\tilde{\Lambda}_1 \setminus \partial \tilde{\Lambda}_1$};\nonumber \\ &\sigma^{(0)}[u] = -\imath a(0, 0, \xi)/\xi_1& \mbox{on $\tilde{\Lambda}_0 \setminus \partial \tilde{\Lambda}_1$};\nonumber \\ &\xi_1\sigma^{(0)}[u] = -\imath \sigma^{(1)}[u]& \mbox{on $ \partial \tilde{\Lambda}_1$}. \label{eqn : map R(model case)}\end{eqnarray}\end{proposition}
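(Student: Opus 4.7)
The plan is to compute $u$ by performing the $s$-integral partially: setting
    \begin{align*}
b(x,\xi) = \int_0^\infty e^{-\imath \xi_1 s} a(s,x,\xi)\, ds,
    \end{align*}
we have $u = \int_{\R^n} e^{\imath \xi \cdot x} b(x,\xi)\, d\xi$. The two microlocalizations in the proposition correspond to two distinct asymptotic behaviors of $b$: for $|\xi_1|$ bounded away from zero, integration by parts in $s$ yields a classical symbol expansion exhibiting the $N^\ast\{0\}$ structure, while near $\xi_1 = 0$ the $s$-integral must be handled by stationary phase in $(s, \xi_1)$ jointly, producing the conormal structure along $N^\ast Y$ with $Y = \{x' = 0\}$. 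I would introduce a smooth partition of unity $1 = \psi_0(\xi_1) + \psi_1(\xi_1)$ on $\R$ with $\psi_1$ vanishing near the origin, and treat the two cases separately.

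For the case $\WF(\chi) \cap \tilde\Lambda_1 = \emptyset$, the microlocalization by $\chi$ kills the part of $u$ with wavefront in $\tilde\Lambda_1$, so up to smoothing one may replace $b$ by $\psi_1 b$. In this region integrate by parts in $s$ via $e^{-\imath \xi_1 s} = -(\imath \xi_1)^{-1} \p_s e^{-\imath \xi_1 s}$: the term at $s = \infty$ vanishes since $a$ is compactly supported in $s$, leaving a boundary contribution $a(0,x,\xi)/(\imath \xi_1)$ at $s = 0$ plus a remainder of one lower order. Iterating produces
    \begin{align*}
\psi_1(\xi_1) b(x,\xi) \sim -\imath\, \psi_1(\xi_1) \sum_{k=0}^\infty \frac{\p_s^k a(0,x,\xi)}{(\imath \xi_1)^{k+1}},
    \end{align*}
a classical symbol expansion in $\xi$. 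Consequently $\chi u \in I^{m-1/2}(\R^n; N^\ast\{0\}\setminus 0; E\otimes \Omega^{1/2})$ with leading symbol $\sigma^{(0)}[u] = -\imath a(0,0,\xi)/\xi_1$, the half-unit order reduction coming from the $\xi_1$ denominator together with the shift between the IPL and conormal symbol-order conventions.

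For the case $\WF(\chi) \cap \tilde\Lambda_0 = \emptyset$, the cutoff $\chi$ removes the microsupport at $x = 0$, so we may work in the region $x \ne 0$. Apply the method of stationary phase to $u$ in the pair of variables $(s, \xi_1)$: the phase $\xi_1(x^1 - s)$ has a unique non-degenerate critical point at $(s, \xi_1) = (x^1, 0)$, with Hessian of determinant $-1$ and signature zero. Provided $x^1 > 0$ this critical point lies in the interior of the integration domain $s \in (0, \infty)$, so stationary phase produces, modulo terms of lower order,
    \begin{align*}
u \sim 2\pi \int_{\R^{n-1}} e^{\imath \xi' x'} a(x^1, (x^1, x'), (0, \xi'))\, d\xi',
    \end{align*}
a conormal distribution along $N^\ast Y \setminus 0$ of order $m$. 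Evaluating on $\tilde\Lambda_1 \setminus \p\tilde\Lambda_1$ (where $x^1 > 0$, $x' = 0$, $\xi_1 = 0$, $\xi' \ne 0$) gives $\sigma^{(1)}[u] = a(x^1, (x^1, 0), (0, \xi'))$. For $x^1 < 0$ there is no stationary point and non-stationary phase shows that the integral is smoothing there, which justifies the restriction $x^1 \ge 0$ in the definition of $\tilde\Lambda_1$.

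The compatibility relation on $\p\tilde\Lambda_1 = \{x = 0,\ \xi_1 = 0,\ \xi' \ne 0\}$ follows immediately from the two formulas: multiplying $\sigma^{(0)}[u]$ by $\xi_1$ and taking the limit $\xi_1 \to 0$ yields
    \begin{align*}
\xi_1 \sigma^{(0)}[u] = -\imath\, a(0, 0, \xi)\big|_{\xi_1 = 0} = -\imath\, a(0, 0, (0, \xi')) = -\imath\, \sigma^{(1)}[u]\big|_{x^1 = 0},
    \end{align*}
which is precisely (\ref{eqn : map R(model case)}). The main technical obstacle is to check that the two local symbol descriptions—conormal at $N^\ast\{0\}$ in Case 2 and conormal at $N^\ast Y$ in Case 1—arise from a single coherent IPL symbol structure on $\tilde\Lambda_0 \cup \tilde\Lambda_1$, with the compatibility on $\p\tilde\Lambda_1$ being the unique gluing condition, and to carefully handle the boundary of the $s$-integration when the critical point approaches $s = 0$ from the interior. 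Both issues are cleanly addressed in the IPL principal symbol calculus of \cite{Melrose-Uhlmann-CPAM1979}, up to the $(2\pi)$ and $\imath$ factors which the statement allows us to ignore.
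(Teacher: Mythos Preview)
Your argument is correct and is essentially the standard Melrose--Uhlmann computation: the paper does not supply its own proof of this proposition but simply cites \cite[pp.~486--487, Proposition~2.3 and Remark~2.7]{Melrose-Uhlmann-CPAM1979}, and your integration-by-parts in $s$ for the $\tilde\Lambda_0$-side together with stationary phase in $(s,\xi_1)$ for the $\tilde\Lambda_1$-side is exactly the argument given there.
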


IPL distributions can be defined on any pair of Lagrangians $(\Lambda_0, \Lambda_1)$, $\Lambda_j \subset T^* X \setminus 0$, with a clean intersection. By a clean intersection of two Lagrangians, we mean $$T_\lambda(\Lambda_0) \cap T_\lambda(\Lambda_1) = T_\lambda(\partial \Lambda_1)\quad \mbox{for any $\lambda \in \partial \Lambda_1$},$$ given two Lagrangians $\Lambda_0$ and $\Lambda_1$ with $\Lambda_0 \cap \Lambda_1 = \partial \Lambda_1$.
However, we will omit discussion of Lagrangian distributions, and make the additional assumption that 
    \begin{align}\label{Lambda_conorm}
\Lambda_j \setminus \p \Lambda_1 \subset N^* Y_j \setminus 0, \quad j=0,1,
    \end{align}
for some submanifolds $Y_j \subset X$.

\begin{definition}Suppose $(\Lambda_0, \Lambda_1)$, a pair of Lagrangians over a smooth $n$-manifold $X$, intersect cleanly at $\Lambda_1$.  The space $I^m(X; \Lambda_0, \Lambda_1; E \otimes \Omega^{1/2})$ consists of distributions of the form $$u_0 + u_1 + \sum_j F_j v_j,$$ where
$u_0 \in I^{m -1/2}(X; \Lambda_0;  E \otimes \Omega^{1/2})$;
$u_1 \in I^{m}(X; \Lambda_1 \setminus \partial \Lambda_1;  E \otimes \Omega^{1/2})$;
$\{F_j\}$ is a family of zero-th order Fourier integral operators associated with the inverse of the homogeneous symplectic transformation from $V_j$ to $T^\ast \mathbb{R}^n$, where $\{V_j\}$ is a locally finite, countable covering of $\partial\Lambda_1$;
and $v_j \in I^m(\mathbb{R}^n; \tilde{\Lambda}_0, \tilde{\Lambda}_1; E \otimes \Omega^{1/2})$.
\end{definition}

To symbolically construct the parametrix of the wave operator, the key thing is to understand the principal symbols of IPL distributions. As in the model case, away from the intersection of $\Lambda_0$ and $\Lambda_1$, the corresponding IPL distributions are conormal distributions assuming (\ref{Lambda_conorm}).

\begin{proposition}Suppose that (\ref{Lambda_conorm}) holds. Let $u \in I^m(X; \Lambda_0, \Lambda_1; E \otimes \Omega^{1/2})$ and let $\chi$ be a properly supported zero-th order pseudodifferential operator. We have
\begin{eqnarray*}&\chi u \in I^m(X; N^* Y_1 \setminus 0; E\otimes\Omega^{1/2})& \mbox{when $\WF(\chi) \cap  \Lambda_0 = \emptyset$};\\
&\chi u \in I^{m - 1/2}(X; N^* Y_0 \setminus 0; E\otimes\Omega^{1/2})& \mbox{when $\WF(\chi) \cap  \Lambda_1 = \emptyset$}.\end{eqnarray*}
\end{proposition}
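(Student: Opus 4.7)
The plan is to apply $\chi$ termwise to the decomposition $u = u_0 + u_1 + \sum_j F_j v_j$ from the definition of $I^m(X;\Lambda_0,\Lambda_1;E\otimes\Omega^{1/2})$, and to reduce the Fourier integral operator summands to the model Proposition \ref{prop : model case symbol map} by a standard Egorov-type conjugation.

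First, in the case $\WF(\chi) \cap \Lambda_0 = \emptyset$, the term $\chi u_0$ is smooth since $\WF(u_0) \subset \Lambda_0$ is disjoint from $\WF(\chi)$. The term $\chi u_1$ already belongs to $I^m(X; N^\ast Y_1 \setminus 0; E\otimes\Omega^{1/2})$: the assumption (\ref{Lambda_conorm}) makes $u_1$ a conormal distribution associated with $N^\ast Y_1 \setminus 0$, and zero-th order pseudodifferential operators preserve conormal distributions on a fixed conic Lagrangian. In the dual case $\WF(\chi) \cap \Lambda_1 = \emptyset$ the roles of $u_0$ and $u_1$ are reversed and the resulting order is $m - 1/2$, matching the order of $u_0$ in the IPL decomposition.

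For each Fourier integral summand $F_j v_j$, let $\kappa_j: V_j \to T^\ast \R^n$ be the homogeneous symplectic transformation whose inverse $F_j$ is associated to. Egorov's theorem produces a zero-th order pseudodifferential operator $\tilde\chi_j$ on $\R^n$ and a smoothing remainder $R_j$ with $\chi F_j = F_j \tilde\chi_j + R_j$, where the essential support satisfies $\WF(\tilde\chi_j) = \kappa_j(\WF(\chi) \cap V_j)$. Because $\kappa_j$ carries $\Lambda_i \cap V_j$ into $\tilde\Lambda_i$ for $i=0,1$, the hypothesis on $\WF(\chi)$ transfers to the analogous disjointness hypothesis on $\tilde\chi_j$ relative to the model Lagrangians. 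Proposition \ref{prop : model case symbol map} then places $\tilde\chi_j v_j$ in $I^m(\R^n; N^\ast Y \setminus 0; E\otimes\Omega^{1/2})$ or $I^{m-1/2}(\R^n; N^\ast\{0\} \setminus 0; E\otimes\Omega^{1/2})$ depending on the case. Applying $F_j$ then pushes this conormal distribution forward, via the canonical transformation $\kappa_j^{-1}$, to a conormal distribution on $\Lambda_1 \setminus \partial\Lambda_1 \subset N^\ast Y_1 \setminus 0$ respectively $\Lambda_0 \setminus \partial\Lambda_1 \subset N^\ast Y_0 \setminus 0$, of the asserted order.

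Summing over $j$ and adding the contributions of $u_0$ and $u_1$ then yields both assertions. The main, and essentially only, technical input is the Egorov commutation $\chi F_j \equiv F_j \tilde\chi_j$ modulo smoothing together with the induced transformation law for essential supports under $\kappa_j$; this is classical FIO calculus from \cite{Hormander-FIO1,Duistermaat-Hormander-FIO2}. The order bookkeeping is arranged precisely so that the extra $-1/2$ shift in the definition of $u_0$ in the IPL decomposition matches the corresponding shift on the $\tilde\Lambda_0$-side of the model proposition.
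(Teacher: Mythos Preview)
The paper does not supply a proof of this proposition; it is stated as an immediate consequence of the model case (Proposition~\ref{prop : model case symbol map}) together with the definition of $I^m(X;\Lambda_0,\Lambda_1;E\otimes\Omega^{1/2})$ via the local FIO reductions $F_j$. Your argument fills in exactly this implicit reduction and is correct: the Egorov commutation $\chi F_j \equiv F_j\tilde\chi_j$ modulo smoothing, with $\WF(\tilde\chi_j)\subset\kappa_j(\WF(\chi)\cap V_j)$, is precisely what transfers the disjointness hypothesis to the model pair $(\tilde\Lambda_0,\tilde\Lambda_1)$, after which Proposition~\ref{prop : model case symbol map} and the mapping properties of zero-th order FIOs on Lagrangian distributions finish the job.
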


Choosing $\chi$ so that $\sigma[\chi](x,\xi) \ne 0$
at $(x,\xi) \in (N^* Y_0 \setminus 0 \cup N^* Y_1 \setminus 0) \setminus \p \Lambda_1$, 
we can define the principal symbols of $u$ away from the intersection $\p \Lambda_1$,
\begin{eqnarray*}&&\sigma^{(1)}[u] = \sigma[\chi u] / \sigma[\chi] \in S^{m+n/4} (N^* Y_1 \setminus 0; E \otimes \Omega^{1/2});\\
&&\sigma^{(0)}[u] = \sigma[\chi u] / \sigma[\chi]  \in S^{m-1/2+n/4} (N^* Y_0 \setminus 0; E \otimes \Omega^{1/2}).\end{eqnarray*}
However, the map from the IPL distributions $I^m(X; \Lambda_0, \Lambda_1; E \otimes \Omega^{1/2}) $ to the symbols
$$
S^{m+n/4} (\Lambda_1 \setminus \partial \Lambda_1; E \otimes \Omega^{1/2}) \times S^{m-1/2+n/4} (\Lambda_0 \setminus \partial \Lambda_1; E \otimes \Omega^{1/2})
$$ can not be defined as the principal symbol map, since it is not surjective. 
Indeed, Proposition \ref{prop : model case symbol map} implies that $\sigma^{(1)}[u]$ extends to a smooth section of $E \otimes \Omega^{1/2}$ up to $\partial \Lambda_1$, whilst $h\sigma^{(0)}[u]$ extends to a smooth section over $\Lambda_0$ if $h$ is a smooth function vanishing on $\partial \Lambda_1$. Hence the principal symbol space should be a proper subspace of the product space of symbols.

Moreover, the fact that the Lagrangians $\Lambda_j$, $j=0,1$, may not be conormal bundles near the intersection $\p \Lambda_1$ causes additional complications. In the case of applications that we are interested in, $\Lambda_1$ fails to be a conormal bundle near $\p \Lambda_1$.
The principal symbol of a general Lagrangian distribution is not a section of $E \otimes \Omega^{1/2}$ but a section of $E \otimes \Omega^{1/2} \otimes L$ where $L$ is the Maslov bundle. To avoid discussion of Maslov bundles over $\Lambda_j$, $j=0,1$, we make the assumption that they are trivial. In all the cases that we are interested in, $\Lambda_0$ is a conormal bundle and $\Lambda_1$ is the future flowout of $\Lambda_0 \cap \Sigma(P)$, with $P$ the wave operator in the Minkowski space. We will show that the Maslov bundles over the flowouts of interest are trivial in Section \ref{sec_Maslov_flowout} below. 

To understand the principal symbol space, assuming triviality of the Maslov bundles, we now elucidate the relationship between $\sigma^{(1)}[u]$ and $\sigma^{(0)}[u]$. Following \cite{Melrose-Uhlmann-CPAM1979}, we define a map
$$
R: S^{m - 1/2 + n/4}(\Lambda_0 \setminus \partial \Lambda_1; E \otimes \Omega^{1/2}) \to S^{m + n/4}(\Lambda_1; E \otimes \Omega^{1/2})|_{\partial\Lambda_1},
$$ 
that encodes the relation (\ref{eqn : map R(model case)}).
Then the sections,
$$\left\{(a^{(1)}, a^{(0)}) \,\bigg|\begin{array}{l}  a^{(0)} \in S^{m - 1/2 + n/4}(\Lambda_0 \setminus \partial \Lambda_1; E \otimes \Omega^{1/2}),\\a^{(1)} \in S^{m + n/4}( \Lambda_1; E \otimes \Omega^{1/2}), \\ a^{(1)}|_{\partial \Lambda_1} = \mathscr R a^{(0)},\\ \mbox{$ha^{(0)}$ is smooth on $\partial\Lambda_1$ if $h$ vanishes on $\partial\Lambda_1$}. \end{array}\right\},$$
where 
    \begin{align}\label{def_R}
\mathscr R = e^{\imath \pi/4} (2\pi)^{1/4} R,
    \end{align}
will be the desired principal symbol space, that we will call $S^m(\Lambda_1, \Lambda_0; E \otimes \Omega^{1/2})$, and we have the exact sequence \begin{equation}\label{eqn : exact sequence}\begin{split}\lefteqn{
0 \hookrightarrow I^{m - 1/2}(X; \Lambda_0; E\otimes \Omega^{1/2} ) + I^{m - 1}(X; \Lambda_0, \Lambda_1; E\otimes \Omega^{1/2} )} \\ &\quad\quad\quad\hookrightarrow I^{m}(X; \Lambda_0, \Lambda_1; E\otimes \Omega^{1/2} ) \xrightarrow{\sigma} S^{m + n/4}( \Lambda_0, \Lambda_1; E\otimes \Omega^{1/2} ) \rightarrow 0.\end{split}
\end{equation}
We remark that the constant $e^{\imath \pi/4} (2\pi)^{1/4}$ in (\ref{def_R}) results from the fact that $\Lambda_0$ is parametrized with $n + 1$ phase variables whilst  $\Lambda_1$ is parametrized with $n$ phase variables, cf. the normalization in (\ref{def_conorm_dist}).

Apart from the Hermitian bundle factor, the map $R$ was constructed by Melrose-Uhlmann \cite[p.491--493]{Melrose-Uhlmann-CPAM1979}. But that factor is harmless.
Indeed, after passing to the model case, we can simply define $R$ by (\ref{eqn : map R(model case)}), that is,  $R a^{(0)} = \imath (\xi_1 a^{(0)})|_{\p \Lambda_1}$.
This definition entails, of course, that $R$ does not depend on the choice of a homogeneous symplectic transformation that maps $(\Lambda_0, \Lambda_1)$ locally to the model case $(\tilde \Lambda_0, \tilde \Lambda_1)$. Such coordinate invariance was shown in \cite{Melrose-Uhlmann-CPAM1979}.

Having the principal symbol map (\ref{eqn : exact sequence}), we are ready to give a proof of Theorem \ref{th_parametrix}, following \cite{Melrose-Uhlmann-CPAM1979}.

\begin{proof}[Proof of Theorem \ref{th_parametrix}]
We will construct a parametrix for (\ref{eq_wave_lin}),
and the claim will then follow from solving (\ref{eq_wave_lin}) with a smooth function $f$ on the right-hand side. 
The first step of the construction is to find 
$
u_{(0)} \in I^{k - 2 + 1/2}(M; \Lambda_0, \Lambda_1; E\otimes \Omega^{1/2} )
$
such that \begin{equation}\label{eqn : first parametrix}Pu_{(0)} = f + f_{(1)} + e_{(1)},\end{equation} where $f_{(1)} \in  I^{k - 1}(M; \Lambda_0; E\otimes \Omega^{1/2} )$ and $e_{(1)} \in I^{k - 3/2}(M; \Lambda_0, \Lambda_1; E\otimes \Omega^{1/2} )$.

To do so, we use the symbol calculus. First of all, since $P$ is elliptic on $\Lambda_0 \setminus \partial \Lambda_1$, we choose $$\sigma[u_{(0)}] = p^{-1} \sigma[f] \quad \mbox{on $\Lambda_0 \setminus \partial \Lambda_1$},$$ where we used the shorthand notation $p=\sigma[P]$ for the principal symbol of $P$. As an element of $I^{k - 2 + 1/2}(M; \Lambda_0, \Lambda_1; E\otimes \Omega^{1/2} )$, $u_{(0)}$ has the following principal symbol on $\partial\Lambda_1$, \begin{equation}\label{eqn : elliptic construction}
\sigma[u_{(0)}]|_{\partial \Lambda_1} = \mathscr R(p^{-1} \sigma[f]).
\end{equation} This can be viewed as the initial condition of the bicharacteristic flow on $\Lambda_1$ emanating from $\Lambda_0$. On the other hand, the symbol calculus on $\Lambda_1$ obeys the  transport equation, $$\sigma[Pu] = ( \imath^{-1} \mathscr{L}_{H_P} + c) \sigma[u] \quad\mbox{on $\Lambda_1$},$$ where $c = \sigma_{sub}[P]$ is the subprincipal symbol of $P$. Noting that $\WF(f)$ does not intersect $\Lambda_1 \setminus \partial \Lambda_1$, we have
    \begin{align}\label{eqn : first transport equation}
(\mathscr{L}_{H_P} + \imath c) \sigma[u_{(0)}] = 0, \quad\mbox{on $\Lambda_1 \setminus \p \Lambda_1$}.
    \end{align}

Combining \eqref{eqn : elliptic construction} and \eqref{eqn : first transport equation}, we have solved \eqref{eqn : first parametrix}.
Next, we iteratively solve the following equations \begin{equation}\label{eqn : j-th parametrix}Pu_{{(j - 1)}} = f_{(j - 1)} + e_{(j - 1)} + f_{(j)} + e_{(j)},\end{equation} where $f_{(j)} \in  I^{k - j}(M; \Lambda_0; E\otimes \Omega^{1/2} )$ and $e_{(j)} \in I^{k - j - 1/2}(M; \Lambda_0, \Lambda_1; E\otimes \Omega^{1/2} )$. This can be done by choosing $u_{(j)}$ obey \begin{align*}
\sigma[u_{(j)}] &= p^{-1} \sigma[f_{(j)}], &\quad\mbox{on $\Lambda_0$};\\
(\mathscr{L}_{H_P} + \imath c) \sigma[u_{(j)}] &= \imath \sigma[e_{(j)}], &\quad\mbox{\mbox{on $\Lambda_1 \setminus \p \Lambda_1$}} ;\\  
\sigma[u_{(j)}] &= \mathscr R(p^{-1} \sigma[f_{(j)}]), &\quad\mbox{on $\partial\Lambda_1$}.
\end{align*}
We complete the proof by adding up the equations \eqref{eqn : first transport equation} and \eqref{eqn : j-th parametrix} for $j = 1, \cdots, N$, and letting $N \rightarrow \infty$.
\end{proof}

\section{Theorem \ref{th_parametrix} in the context of Example \ref{ex_f_delta}}
\label{appendix2}

Let $\Lambda_j$, $j=0,1$, be as in Example \ref{ex_f_delta}. That is, $\Lambda_0 = N^* \{0\} \setminus 0$ and, taking into account the microlocal cutoff $\chi$,
    \begin{align}\label{Lambda1_ex}
\Lambda_1 = \{(t, t\theta; -\lambda , \lambda \theta) \in \R^{1+3} \times \R^{1+3} : \lambda \in \R \setminus 0,\ \theta \in \mathcal V,\ t \ge 0\},
    \end{align}
where $\mathcal V$ is a small neighbourhood of $\theta_0$ in $S^2$. The smaller $\WF(\chi)$ is, the smaller we can choose $\mathcal V$. 

Observe that $\Lambda_1$ has two components 
$$
\Lambda_1^\pm = \{(t, t\theta; -\lambda , \lambda \theta) \in \R^{1+3} \times \R^{1+3} : \pm\lambda > 0,\ \theta \in \mathcal V,\ t \ge 0\}.
$$
Let us give an explicit choice of two homogeneous symplectic transformations $F^\pm$ taking $(\Lambda_0, \Lambda_1^\pm)$ to the model case $(\tilde \Lambda_0, \tilde \Lambda_1)$.
We define in a neighbourhood of $\Lambda_1^+$ in $T^* \R^{1+3} \setminus 0$ the map 
$$
F^+(x^0,x';\xi_0, \xi') = (x^0, x' - x^0 \xi' / |\xi'|; \xi_0 + |\xi'|, \xi').
$$
Then $F^+$ takes $N^* \{0\} \setminus 0$ to itself and $\Lambda_1^+$ to $\tilde \Lambda_1$. Indeed, 
$$
F^+(0,0;\xi_0, \xi') = (0, 0; \xi_0 + |\xi'|, \xi'),
$$
and for $\lambda > 0$,
$$
F^+(t,t\theta;-\lambda, \lambda \theta) = 
(t,t\theta - t\theta;-\lambda + \lambda, \lambda \theta) = (t, 0; 0, \lambda\theta).
$$
Analogously, we define 
$$
F^-(x^0,x';\xi_0, \xi') = (x^0, x' + x^0 \xi' / |\xi'|; \xi_0 - |\xi'|, \xi')
$$
taking $N^* \{0\} \setminus 0$ to itself and $\Lambda_1^-$ to $\tilde \Lambda_1$.
We will show that $F^-$ is symplectic, the proof for $F^+$ being analogous. 

It holds that 
$$
\frac{dF^-}{d(x^0,x';\xi_0, \xi')}
= 
\begin{pmatrix}
1 & 0 & 0 & 0
\\
\xi'/|\xi'| & \id & 0 & B
\\
0 & 0 & 1 & -(\xi')^T /|\xi'|
\\
0 & 0 & 0 & \id
\end{pmatrix},
$$
where $B$ is a symmetric matrix, the precise form of which is inconsequential. 
Writing shortly $dF^-$ for the above derivative and $J$ for the symplectic form on $T^* \R^{1+3}$ as a matrix, we assert that $(dF^-)^T J dF^- = J$.

We write 
$\eta = \xi'/|\xi'|$ and
$$
J = \begin{pmatrix}
0 & 0 & 1 & 0
\\
0 & 0 & 0 & \id
\\
-1 & 0 & 0 & 0
\\
0 & -\id & 0 & 0
\end{pmatrix}.
$$
Then it follows 
    \begin{align*}
(dF^-)^T J dF^-
&= 
\begin{pmatrix}
1 & \eta^T & 0 & 0
\\
0 & \id & 0 & 0
\\
0 & 0 & 1 & 0
\\
0 & B & -\eta & \id
\end{pmatrix}
\begin{pmatrix}
0 & 0 & 1 & -\eta^T
\\
0 & 0 & 0 & \id
\\
-1 & 0 & 0 & 0
\\
-\eta & -\id & 0 & -B
\end{pmatrix}
\\&=
\begin{pmatrix}
0 & 0 & 1 & -\eta^T + \eta
\\
0 & 0 & 0 & \id
\\
-1 & 0 & 0 & 0
\\
\eta - \eta & -\id & 0 & B- B
\end{pmatrix} = J.
    \end{align*}

Let us now write the initial condition (\ref{transport_init}) in this context. 
We denote by $(\tilde x, \tilde \xi) = (\tilde x^0, \tilde x'; \tilde \xi_0, \tilde \xi')$ the local coordinates on $T^* \R^{1+3}$ given by $F^+$, that is, 
    \begin{align}\label{Fp_coords}
(\tilde x^0, \tilde x'; \tilde \xi_0, \tilde \xi') =  F^+(x^0,x';\xi_0, \xi').
    \end{align}
Recall that $p = \sigma[P]$ is a section of $T^* \R^{1+3} \setminus 0$, and that 
$2 p(x,\xi) = -\xi_0^2 + |\xi'|^2$
in the Cartesian coordinates $(x,\xi) = (x^0,x';\xi_0, \xi')$.
Therefore,
$$
p(\tilde x, \tilde \xi) = 
- (\tilde \xi_0 - |\tilde \xi'|)^2 / 2
+ |\tilde \xi'|^2 / 2
= \tilde \xi_0 (|\tilde \xi'| - \tilde \xi_0 / 2),
$$
and (\ref{transport_init}) can be written
    \begin{align}\label{transp_init_p}
\sigma[u](0, \tilde \xi') &= \mathscr R(p^{-1} \sigma[f])(0, \tilde \xi') = \iota_0 (\tilde \xi_0 (\tilde \xi_0 (|\tilde \xi'| - \tilde \xi_0 / 2))^{-1} \sigma[f](0,\tilde \xi))|_{\tilde \xi_0 = 0}
\\\notag&
= \iota_0 |\tilde \xi'|^{-1} \sigma[f](0; 0, \tilde \xi'),
    \end{align}
where $\iota_0 \in \C \setminus 0$ is a constant containing the $2 \pi$ and $\imath$ factors.
Analogously, setting 
    \begin{align}\label{Fm_coords}
(\tilde x^0, \tilde x'; \tilde \xi_0, \tilde \xi') =  F^-(x^0,x';\xi_0, \xi'),
    \end{align}
we obtain 
$$
p(\tilde x, \tilde \xi) = 
- (\tilde \xi_0 + |\tilde \xi'|)^2 / 2
+ |\tilde \xi'|^2 / 2
= -\tilde \xi_0 (|\tilde \xi'| + \tilde \xi_0 / 2),
$$
and (\ref{transport_init}) reads
    \begin{align}\label{transp_init_m}
\sigma[u](0, \tilde \xi') = -\iota_0 |\tilde \xi'|^{-1} \sigma[f](0; 0, \tilde \xi'),
    \end{align}
with the same $\iota_0$.

\subsection{Trivialization of the Maslov bundle over the flowout}
\label{sec_Maslov_flowout}

It is well-known \cite[Th. 3.3.4]{Hormander-FIO1} that the Maslov bundle over a conormal bundle $N^* K \setminus 0$ is trivial. However, even in the case the flowout $\Lambda_1$ in Example \ref{ex_f_delta}, more care is needed, since $\Lambda_1$ fails to coincide with a conormal bundle near $\mathscr B = \Lambda_1 \cap \Lambda_0$. We will discuss the Maslov bundle in detail only in the context of Example \ref{ex_f_delta}, this being the most important case for the purposes of the present paper. 

When the essential support $\WF(\chi)$ is small around 
$\ccl(0,\pm\xi^0)$, the future flowout $\Lambda_1$ is embedded in the following flowout to both past and future, a localized version of (\ref{Lambda_1_pastfuture}),
$$
\hat \Lambda_1 = \{(t, t\theta; -\lambda , \lambda \theta) \in \R^{1+3} \times \R^{1+3} : \lambda \in \R \setminus 0,\ \theta \in \mathcal V,\ t \in \R\},
$$
where $\mathcal V \subset S^2$ is as in (\ref{Lambda1_ex}). 
We suppose that $\mathcal V$ is chosen so that there is a diffeomorphism $\Theta : B \to \mathcal V$
where $B$ is an open ball in $\R^2$, with the centre at the origin, and $\Theta(0) = \theta_0$.

Analogously to $\Lambda_1$, also $\hat \Lambda_1$ has two components
$$
\hat \Lambda_1^\pm = \{(t, t\theta; -\lambda , \lambda \theta) \in \R^{1+3} \times \R^{1+3} : \pm\lambda >0,\ \theta \in \mathcal V,\ t \in \R\}.
$$
We will show that $\hat \Lambda_1^+$ is contractible. The same holds for $\hat \Lambda_1^-$ with an analogous proof. It then follows that any vector bundle over $\hat \Lambda_1 = \hat \Lambda_1^+ \cup \hat \Lambda_1^-$ is trivial \cite{Bott1982}. 
In particular, the Maslov bundle over $\Lambda_1$ is trivial. As $\Lambda_0 = N^*\{0\} \setminus 0$, also the Maslov bundle over $\Lambda_0$ is trivial. 


Recall that a manifold being contractible means that the identity map is smoothly homotopic to the constant map. 
The manifold $\hat \Lambda_1^+$ is contractible, since the map 
    \begin{align*}
&H : \hat \Lambda_1^+ \times [0,1] \to \hat \Lambda_1^+, \\
&H(t, t\theta; -\lambda , \lambda \theta; s)
= (st, st h_1(s,\theta); -h_2(s,\lambda), h_2(s,\lambda) h_1(s,\theta)),
    \end{align*}
with $h_1(s,\theta) = \Theta(s\Theta^{-1}(\theta))$ and $h_2(s,\lambda) = s\lambda + 1-s$, satisfies
$$
H(t, t\theta; -\lambda , \lambda \theta; 1)
= (t, t\theta; -\lambda , \lambda \theta),
\quad 
H(t, t\theta; -\lambda , \lambda \theta; 0)
= (0, 0, -1, \theta_0).
$$

\section{Flowout from the triple intersection}
\label{appendix3}

We consider the intersection the three cones, defined by  (\ref{def_Kj}), that is,  
    \begin{align*}
K_\mathbf{j} = \{(t_\mathbf{j} + t, x_\mathbf{j}' + t \theta) \in \R^{1+3}: t > 0,\ \theta \in S^2\}, \quad \mathbf{j}=1,2,3,
    \end{align*}
where, for small $r > 0$,
    \begin{align*}
(t_\mathbf{j}, x_\mathbf{j}') &= x_\mathbf{j} = \gamma(-s_{in};y,\xi_\mathbf{j}), \quad 
\xi_\mathbf{j}
= (1,\sqrt{1 - r_\mathbf{j}^2}, r_\mathbf{j}, 0),
\quad r_1 = 0,\ r_2 = r,\ r_3 = -r,
    \end{align*}
see (\ref{xi_eta_form}), (\ref{xi23}) and (\ref{def_xj}).
After a translation, we may assume without loss of generality that $y=0$. Then $t_\mathbf{j} = -s_{in}$ for all $\mathbf{j}=1,2,3$, and
    \begin{align*}
x_\mathbf{j}' = (- s_{in} \sqrt{1 - r_\mathbf{j}^2}, -s_{in} r_\mathbf{j}, 0), \quad \mathbf{j}=1,2,3.
    \end{align*}

The points $(t,x') \in \R^{1+3}$ in $K_1 \cap K_2 \cap K_3$ satisfy
$|x'-x_\mathbf{j}'|^2 = |t+s_{in}|^2$ for each $\mathbf{j}=1,2,3$,
or equivalently, writing $x'=(x^1,x^2,x^3)$,
    \begin{align*}
|x^1 + s_{in} \sqrt{1 - r_\mathbf{j}^2}|^2
+ |x^2 + s_{in} r_\mathbf{j}|^2 + |x^3|^2 = |t+s_{in}|^2,
\quad \mathbf{j}=1,2,3.
    \end{align*}
To simplify the notation, we write $z=x^3$. Taking $x^1=x^2=0$, the above three equations simplify to the single equation $z^2 = t^2 + 2 s_{in} t$. This equation defines the filament in spacetime that acts as an artificial source, as discussed in the introduction.
Solving for $z$ gives an equation for the two moving point sources in Figure \ref{fig_3waves}. In the figure, we have taken $s_{in} = 2$ and $r=0.8$.

For the study of the flowout from the filament it is more convenient to solve for $t$ rather than for $z$. Near $y=0$ this yields
    \begin{align*}
t = T(z) = - s_{in} + \sqrt{s_{in}^2 + z^2}.
    \end{align*}
As $K_1 \cap K_2 \cap K_3$ is a smooth, one dimensional manifold near $y$, it holds locally that
    \begin{align*}
K_1 \cap K_2 \cap K_3
= \{ (T(z), 0, 0, z) : z \in \R \}.
    \end{align*}
    
For a covector $(\zeta_0, \dots,\zeta_4) \in \R^4$ in the fibre of $\Lambda_0 = N^*(K_1 \cap K_2 \cap K_3) \setminus 0$ at $(T(z), 0, 0, z)$ it holds that $\zeta_0 T'(z) + \zeta_4 = 0$ where $T'$ is the derivative of $T$. Therefore
    \begin{align*}
\Lambda_0
= \{ (T(z), 0, 0, z; \zeta_0, \zeta_1, \zeta_2, -\zeta_0 T'(z)) : z \in \R,\ (\zeta_0, \zeta_1, \zeta_2) \in \R^3 \setminus 0 \}.
    \end{align*}
We will proceed to compute the future flowout $\Lambda_1$ of $\Lambda_0 \cap \Sigma(P)$.
To simplify the notation, we write $\zeta_0 = -\lambda$.
Then $(-\lambda, \zeta_1, \zeta_2, \lambda T'(z))$ is lightlike if and only if
    \begin{align*}
\lambda^2 = \zeta_1^2 + \zeta_2^2 + \lambda^2 |T'(z)|^2.
    \end{align*}
We write $(\zeta_1, \zeta_2) = \mu \theta$ with $\mu \ge 0$ and $\theta \in S^1$. Then $\mu^2 = (1-|T'(z)|^2)\lambda^2$, and 
    \begin{align*}
\Lambda_1 \setminus \Lambda_0
= \{ (T(z) + s\lambda, s\lambda \tilde \theta, z + s\lambda \varepsilon; -\lambda, \lambda \tilde \theta, \lambda \varepsilon) :\ 
&\tilde \theta = \sqrt{1-\varepsilon^2} \theta,\ \varepsilon = T'(z),
\\&\theta \in S^1,\ \lambda \in \R \setminus 0,\ z \in \R,\ s \lambda > 0 \}.
    \end{align*}
Observe that $\varepsilon = T'(z) = z / \sqrt{s_{in}^2 + z^2}$,
and this is small when we localize near $y=0$ as the notation suggests. 

Let us show that $\Lambda_1 \setminus \Lambda_0$ coincides with a conormal bundle. It is enough to verify that its projection to the base space $\R^{1+3}$ is a smooth manifold. For our purposes it is enough to consider $\Lambda_1 \setminus \Lambda_0$ only over the compact diamond $\mathbb D$ and therefore it is enough verify that the map $F(t,\theta,z) = (T(z) + t, t \tilde \theta, z + t \varepsilon)$ is injective and has injective differential for $t>0$, $\theta \in S^1$ and small $|z|$.

Suppose that 
    \begin{align}\label{F_inj}
F(t_1,\theta_1,z_1) = F(t_2,\theta_2,z_2).
    \end{align}
As $t> 0$ and $\varepsilon$ is small, the second component of (\ref{F_inj}) implies $\theta_1 = \theta_2$
and $t_1 \sqrt{1-|T'(z_1)|^2} = t_2 \sqrt{1-|T'(z_2)|^2}$. Writing $Z_j = \sqrt{s_{in}^2 + z_j^2}$, and using 
    \begin{align*}
1-|T'(z_j)|^2 
= \frac{s_{in}^2 + z_j^2 - z_j^2}{s_{in}^2 + z_j^2}
= \frac{s_{in}^2}{Z_j^2},
    \end{align*}
the latter equation reduces to 
$t_1 / Z_1 = t_2 / Z_2$.
On the other hand, the last component of (\ref{F_inj})                 
gives
$z_1(1 + t_1 / Z_1) = z_2(1 + t_2 / Z_2)$.
As $1+t_1 / Z_1 = 1 + t_2 / Z_2 > 0$, we get $z_1 = z_2$.
This together with $t_1 / Z_1 = t_2 / Z_2$ implies that also $t_1 = t_2$. We have shown that $F$ is injective.

Observe that, writing $Z = Z_j$ when $z_1 = z$, 
    \begin{align*}
\frac {d \epsilon}{d z} = T''(z) = \frac 1 Z-\frac {z^2} {Z^3} = \frac {1- \varepsilon^2} Z.
    \end{align*}
Therefore, letting $\R \supset B \ni a \mapsto \Theta(a) \in S^1$ be local coordinates on $S^1$,
    \begin{align*}
dF = \begin{pmatrix}
1 & 0 & \varepsilon
\\
\tilde \theta & t \sqrt{1-\varepsilon^2} \Theta'
& - t\frac{\varepsilon}{\sqrt{1-\varepsilon^2}} \frac {1- \varepsilon^2} Z \theta
\\
\varepsilon & 0 & 1 + t \frac {1- \varepsilon^2} Z
\end{pmatrix}.
    \end{align*}
When $z=0$, this reduces to 
    \begin{align*}
dF = \begin{pmatrix}
1 & 0 & 0
\\
\tilde \theta & t \Theta'
& 0
\\
0 & 0 & 1 + \frac {t} Z
\end{pmatrix},
    \end{align*}
which is invertible since $t > 0$, $Z > 0$ and $\Theta' \ne 0$. The same holds when $|z|$ is small enough, and we have shown that $\Lambda_1 \setminus \Lambda_0$ coincides with a conormal bundle.
Let us also remark that an argument similar to that in Section \ref{sec_Maslov_flowout} shows that the Maslov bundle over $\Lambda_1$ is trivial.

\end{appendix}


\bibliographystyle{abbrv}
\bibliography{main}

\begin{thebibliography}{10}

\bibitem{Alinhac1983}
S.~Alinhac.
\newblock Non-unicit\'e du probl\`eme de {C}auchy.
\newblock {\em Ann. of Math. (2)}, 117(1):77--108, 1983.

\bibitem{Anderson2004}
M.~Anderson, A.~Katsuda, Y.~Kurylev, M.~Lassas, and M.~Taylor.
\newblock Boundary regularity for the {R}icci equation, geometric convergence,
  and {G}el'fand's inverse boundary problem.
\newblock {\em Invent. Math.}, 158(2):261--321, 2004.

\bibitem{Belishev1992}
M.~I. Belishev and Y.~V. Kurylev.
\newblock To the reconstruction of a {R}iemannian manifold via its spectral
  data ({BC}-method).
\newblock {\em Comm. Partial Differential Equations}, 17(5-6):767--804, 1992.

\bibitem{Bernal2005}
A.~N. Bernal and M.~S\'anchez.
\newblock Smoothness of time functions and the metric splitting of globally
  hyperbolic spacetimes.
\newblock {\em Comm. Math. Phys.}, 257(1):43--50, 2005.

\bibitem{Bernal2007}
A.~N. Bernal and M.~S{\'a}nchez.
\newblock Globally hyperbolic spacetimes can be defined as `causal' instead of
  `strongly causal'.
\newblock {\em Classical Quantum Gravity}, 24(3):745--749, 2007.

\bibitem{Bony}
J.-M. Bony.
\newblock Second microlocalization and propagation of singularities for
  semilinear hyperbolic equations.
\newblock In {\em Hyperbolic equations and related topics ({K}atata/{K}yoto,
  1984)}, pages 11--49. Academic Press, Boston, MA, 1986.

\bibitem{Bott1982}
R.~Bott and L.~W. Tu.
\newblock {\em Differential forms in algebraic topology}, volume~82 of {\em
  Graduate Texts in Mathematics}.
\newblock Springer-Verlag, New York-Berlin, 1982.

\bibitem{Cottingham}
W.~N. Cottingham and D.~A. Greenwood.
\newblock {\em An introduction to the standard model of particle physics}.
\newblock Cambridge University Press, Cambridge, second edition, 2007.

\bibitem{Duistermaat-Hormander-FIO2}
J.~J. Duistermaat and L.~H\"{o}rmander.
\newblock Fourier integral operators. {II}.
\newblock {\em Acta Math.}, 128(3-4):183--269, 1972.

\bibitem{Greenleaf-Uhlmann-CMP-1993}
A.~Greenleaf and G.~Uhlmann.
\newblock Recovering singularities of a potential from singularities of
  scattering data.
\newblock {\em Comm. Math. Phys.}, 157(3):549--572, 1993.

\bibitem{Hormander-FIO1}
L.~H\"{o}rmander.
\newblock Fourier integral operators. {I}.
\newblock {\em Acta Math.}, 127(1-2):79--183, 1971.

\bibitem{Hormander-Vol3}
L.~H\"{o}rmander.
\newblock {\em The analysis of linear partial differential operators. {III}},
  volume 274 of {\em Grundlehren der Mathematischen Wissenschaften}.
\newblock Springer-Verlag, Berlin, 1985.

\bibitem{Hormander-Vol4}
L.~H\"{o}rmander.
\newblock {\em The analysis of linear partial differential operators. {IV}},
  volume 275 of {\em Grundlehren der Mathematischen Wissenschaften}.
\newblock Springer-Verlag, Berlin, 1985.

\bibitem{Hormander1}
L.~H\"{o}rmander.
\newblock {\em The analysis of linear partial differential operators. {I}}.
\newblock Springer Study Edition. Springer-Verlag, Berlin, second edition,
  1990.

\bibitem{HKM}
T.~J.~R. Hughes, T.~Kato, and J.~E. Marsden.
\newblock Well-posed quasi-linear second-order hyperbolic systems with
  applications to nonlinear elastodynamics and general relativity.
\newblock {\em Arch. Rational Mech. Anal.}, 63(3):273--294 (1977), 1976.

\bibitem{Strohmaier}
D.~Jakobson and A.~Strohmaier.
\newblock High energy limits of {L}aplace-type and {D}irac-type eigenfunctions
  and frame flows.
\newblock {\em Comm. Math. Phys.}, 270(3):813--833, 2007.

\bibitem{Katchalov2001}
A.~Katchalov, Y.~Kurylev, and M.~Lassas.
\newblock {\em Inverse boundary spectral problems}, volume 123 of {\em
  Monographs and Surveys in Pure and Applied Mathematics}.
\newblock Chapman \& Hall/CRC, Boca Raton, FL, 2001.

\bibitem{Kato1975}
T.~Kato.
\newblock Quasi-linear equations of evolution, with applications to partial
  differential equations.
\newblock pages 25--70. Lecture Notes in Math., Vol. 448, 1975.

\bibitem{KKLO}
Y.~Kian, Y.~Kurylev, M.~Lassas, and L.~Oksanen.
\newblock Hyperbolic inverse problem with data on disjoint sets.
\newblock {\em ArXiv e-prints}.

\bibitem{KLOU}
Y.~Kurylev, M.~Lassas, L.~Oksanen, and G.~Uhlmann.
\newblock Inverse problem for {E}instein-scalar field equations.
\newblock {\em ArXiv e-prints}.

\bibitem{KLU2}
Y.~{Kurylev}, M.~{Lassas}, and G.~{Uhlmann}.
\newblock {Determination of structures in the space-time from local
  measurements: a detailed exposition}.
\newblock {\em ArXiv e-prints}.

\bibitem{KLU1}
Y.~{Kurylev}, M.~{Lassas}, and G.~{Uhlmann}.
\newblock {Inverse problems in spacetime I: Inverse problems for Einstein
  equations - Extended preprint version}.
\newblock {\em ArXiv e-prints}.

\bibitem{KLU}
Y.~Kurylev, M.~Lassas, and G.~Uhlmann.
\newblock Inverse problems for {L}orentzian manifolds and non-linear hyperbolic
  equations.
\newblock {\em Invent. Math.}, 212(3):781--857, 2018.

\bibitem{KOP}
Y.~Kurylev, L.~Oksanen, and G.~P. Paternain.
\newblock Inverse problems for the connection {L}aplacian.
\newblock {\em J. Differential Geom.}, 110(3):457--494, 2018.

\bibitem{Matti}
M.~Lassas.
\newblock Inverse problems for linear and non-linear hyperbolic equations.
\newblock {\em Proc. Int. Cong. of Math. 2018, Rio de Janeiro}, 3, 2018.

\bibitem{LO_Duke}
M.~Lassas and L.~Oksanen.
\newblock Inverse problem for the {R}iemannian wave equation with {D}irichlet
  data and {N}eumann data on disjoint sets.
\newblock {\em Duke Math. J.}, 163(6):1071--1103, 2014.

\bibitem{Lassas2017}
M.~Lassas, G.~Uhlmann, and Y.~Wang.
\newblock Determination of vacuum space-times from the einstein-maxwell
  equations.
\newblock {\em ArXiv e-prints}.

\bibitem{LUW}
M.~Lassas, G.~Uhlmann, and Y.~Wang.
\newblock Inverse problems for semilinear wave equations on {L}orentzian
  manifolds.
\newblock {\em Comm. Math. Phys.}, 360(2):555--609, 2018.

\bibitem{LS}
L.~H. Loomis and S.~Sternberg.
\newblock {\em Advanced calculus}.
\newblock Jones and Bartlett Publishers, Boston, MA, 1990.

\bibitem{Melrose1985}
R.~Melrose and N.~Ritter.
\newblock Interaction of nonlinear progressing waves for semilinear wave
  equations.
\newblock {\em Ann. of Math. (2)}, 121(1):187--213, 1985.

\bibitem{Melrose-Uhlmann-CPAM1979}
R.~B. Melrose and G.~A. Uhlmann.
\newblock Lagrangian intersection and the {C}auchy problem.
\newblock {\em Comm. Pure Appl. Math.}, 32(4):483--519, 1979.

\bibitem{Rauch1982}
J.~Rauch and M.~C. Reed.
\newblock Singularities produced by the nonlinear interaction of three
  progressing waves;\ examples.
\newblock {\em Comm. Partial Differential Equations}, 7(9):1117--1133, 1982.

\bibitem{SW}
R.~K. Sachs and H.~H. Wu.
\newblock {\em General relativity for mathematicians}.
\newblock Springer-Verlag, New York-Heidelberg, 1977.
\newblock Graduate Texts in Mathematics, Vol. 48.

\bibitem{Salazar}
R.~Salazar.
\newblock Determination of time-dependent coefficients for a hyperbolic inverse
  problem.
\newblock {\em Inverse Problems}, 29(9):095015, 17, 2013.

\bibitem{Tataru}
D.~Tataru.
\newblock Unique continuation for solutions to {PDE}'s; between
  {H}\"{o}rmander's theorem and {H}olmgren's theorem.
\newblock {\em Comm. Partial Differential Equations}, 20(5-6):855--884, 1995.

\bibitem{Uhlenbeck}
K.~K. Uhlenbeck.
\newblock Variational problems for gauge fields.
\newblock In {\em Proceedings of the {I}nternational {C}ongress of
  {M}athematicians, {V}ol. 1, 2 ({W}arsaw, 1983)}, pages 585--591. PWN, Warsaw,
  1984.

\end{thebibliography}


\end{document}